\numberwithin{equation}{section}
\numberwithin{figure}{section}
\theoremstyle{plain} %plain, definition, remark
\newtheorem{theorem}{Theorem}[section]
\newtheorem*{theorem*}{Theorem}
\newtheorem{lemma}[theorem]{Lemma}
\newtheorem*{lemma*}{Lemma}
\newtheorem{corollary}[theorem]{Corollary}
\newtheorem*{corollary*}{Corollary}
\newtheorem{proposition}[theorem]{Proposition}
\newtheorem*{proposition*}{Proposition}
\newtheorem*{definition*}{Definition}
\newtheorem*{example*}{Example}
\newtheorem*{remark*}{Remark}
\newtheorem*{remarks*}{Remarks}
\newtheorem*{notation}{Notation}
\theoremstyle{definition}
\newtheorem{definition}[theorem]{Definition}
\theoremstyle{remark}
\newtheorem{remark}[theorem]{Remark}
\definecolor{olivegreen}{rgb}{0,0.6,0.1}
\newcommand{\1} {\mspace{1 mu}}
\newcommand{\2} {\mspace{2 mu}}
\newcommand{\msp}[1] {\mspace{#1 mu}}
\newcommand{\eps}{\varepsilon}
\newcommand{\R} {\mathbb{R}}
\newcommand{\C} {{\mathbb{C}}}
\newcommand{\N} {{\mathbb{N}}}
\newcommand{\Cp} {\mathbb{H}}
\newcommand{\Bounded}{\mathcal{B}}
\newcommand{\xSpace} {\mathfrak{X}} % space of x
\newcommand{\xMeasure} {\pi} % probability measure on Xs
\newcommand{\ord}{\mathcal{O}}
\newcommand{\E} {\mathbb{E}}
\newcommand{\ee}{\mathrm{e}} 
\newcommand{\ii}{\mathrm{i}} 
\newcommand{\dd}{\mathrm{d}}
\newcommand{\p}[1]{({#1})}
\newcommand{\pb}[1]{\bigl({#1}\bigr)}
\newcommand{\pB}[1]{\Bigl({#1}\Bigr)}
\newcommand{\pbb}[1]{\biggl({#1}\biggr)}
\newcommand{\sB}[1]{\Bigl[{#1}\Bigr]}
\newcommand{\abs}[1]{\lvert #1 \rvert}
\newcommand{\absb}[1]{\big\lvert #1 \big\rvert}
\newcommand{\absB}[1]{\Big\lvert #1 \Big\rvert}
\newcommand{\absbb}[1]{\bigg\lvert #1 \bigg\rvert}
\newcommand{\norm}[1]{\lVert #1 \rVert}
\newcommand{\avg}[1]{\langle #1 \rangle}
\newcommand{\avgb}[1]{\big\langle #1 \big\rangle}
\newcommand{\avgB}[1]{\Big\langle #1 \Big\rangle}
\newcommand{\scalar}[2]{\langle{#1} \mspace{2mu}, {#2}\rangle}
\DeclareMathOperator{\re}{Re}
\DeclareMathOperator{\im}{Im}
\begin{document}
\renewcommand{\thefootnote}{\fnsymbol{footnote}}
\title{\bf Singularities of solutions to quadratic vector equations on complex upper half-plane}

\author{
\begin{minipage}{0.3\textwidth}
 \begin{center}
Oskari H. Ajanki\footnotemark[1]\\
\footnotesize 
{IST Austria}\\
{\url{oskari.ajanki@iki.fi}}
\end{center}
\end{minipage}
\begin{minipage}{0.3\textwidth}
\begin{center}
L\'aszl\'o Erd{\H o}s\footnotemark[2]  \\
\footnotesize {IST Austria}\\
{\url{lerdos@ist.ac.at}}
\end{center}
\end{minipage}
\begin{minipage}{0.3\textwidth}
 \begin{center}
Torben Kr\"uger\footnotemark[3]\\
\footnotesize 
{IST Austria}\\
{\url{torben.krueger@ist.ac.at}}
\end{center}
\end{minipage}
}

%\date{\today}
\date{} % empty argument removes the date!

\maketitle
\thispagestyle{empty} %%% no page numbers!!!
	
\footnotetext[1]{Partially supported by ERC Advanced Grant RANMAT No.\ 338804, and SFB-TR 12 Grant of the German Research Council.}
\footnotetext[2]{Partially supported by ERC Advanced Grant RANMAT No.\ 338804.}
\footnotetext[3]{Partially supported by ERC Advanced Grant RANMAT No.\ 338804, and SFB-TR 12 Grant of the German Research Council}

\maketitle
\thispagestyle{empty} %%% no page numbers!!!
\vspace{-0.5cm}

\begin{abstract}
Let $ S $ be a positivity preserving symmetric linear operator acting on bounded functions.
The nonlinear equation $-\frac{1}{m}=z+Sm$ with a parameter $z$ in the complex upper half-plane $ \mathbb{H} $ has a unique solution $ m  $ with values in $\mathbb{H}$.
We show that the $ z $-dependence of this solution can be represented as the Stieltjes transforms of a family of probability measures $v$ on $\mathbb{R}$. Under suitable conditions on $ S $, we show that $v$ has a real analytic density apart from finitely many algebraic singularities of degree at most three.

Our motivation  comes from large random matrices. The solution $m$ determines the density of eigenvalues of two prominent matrix ensembles; (i) matrices  with centered independent entries  whose variances are given by $S$ and  (ii) matrices with correlated entries with a translation invariant correlation structure. Our analysis shows
that the limiting eigenvalue density  has only square root 
singularities or a cubic root cusps; no other singularities occur.
\end{abstract}
\vspace{0.2cm}
{\bf Keywords:} Stieltjes-transform,
Algebraic singularity,
Density of states,
Cubic cusp \\
{\bf AMS Subject Classification (2010):} \texttt{45Gxx}, \texttt{46Txx}, \texttt{60B20}, \texttt{15B52}.

%\tableofcontents

\section{Introduction}
\label{sec:Introduction}
\pagenumbering{arabic}
\setcounter{page}{1}

Given a symmetric $N\times N$-matrix $S=(s_{ij})_{i,j=1}^N$ with non-negative entries and a complex number $z$ in the upper half-plane $\mathbb{H} := \{z\in \mathbb{C} :\, \mathrm{Im} \,z\,>\,0\} $, we consider the system of $N$ non-linear equations
\begin{equation}
\label{NSCE}
 -\,\frac{1}{m_i\!}\;=\;z\,+\,\sum_{j=1}^N s_{ij}\1 m_j\, , \qquad  i =1,\dots, N\,,
\end{equation}
for $N$ unknowns $ m_1,\dots,m_N \in \mathbb{H}$. This is one of the simplest nonlinear systems of equations involving a general linear part. 
We will consider a general version of this problem, where $ S $ is replaced by a linear operator acting on the Banach space of  $\mathbb{H}$-valued bounded functions  $m$ on some measure space that replaces the underlying discrete space of $N$ elements in \eqref{NSCE}.

In this paper we give a detailed analysis of the  singularities of
the solution of \eqref{NSCE} as a function of the parameter $z$. In particular, we show
that $m(z)$ is analytic in $z$ down to the real axis, apart from a few singular points. Our main result, Theorem~\ref{thr:Regularity and singularities of the generating density}, asserts that, under some natural conditions on $S$, these singularities are algebraic and they can be  of degree two or three only. In fact, the components of $m(z)$ are the Stieltjes transforms of a family of densities on $\mathbb{R}$ 
with a common support consisting of finitely many compact intervals. The singularities of $m(z)$ originate from the  behavior of these densities at the points where they  vanish. We show that the densities are real analytic whenever they are positive and 
that they can only have square root singularities at the boundary (edges) of their support and cubic root  cusps inside the interior.

Looking at \eqref{NSCE} directly,  the solutions of this system of $N$ quadratic equations in $N$ variables are algebraic functions of large degree in $z$. Thus, algebraic singularities of very high order are  theoretically possible and in case of an infinite dimensional Banach space apparently even  non-algebraic singularities might emerge. Our result excludes these scenarios and precisely classifies the possible singularities.  

The system of equations \eqref{NSCE} naturally appears in the spectral analysis of large random matrices and other related problems. It has been analysed in this context by many authors; e.g. Berezin \cite{Ber}, Wegner \cite{WegnerNorb},  Girko \cite{Girko-book}, Khorunzhy and Pastur \cite{KhorunzhyPastur94}, see also the more recent papers \cite{AZdep,Guionnet-GaussBand,ShlyakhtenkoGBM}.
We mention two prominent examples in this direction.

The first example is a Wigner-type matrix with a   general variance structure (Section~\ref{sec:wigner}).
Let $ H=(h_{ij})_{i,j=1}^N $ be an $N\times N$ real symmetric or complex hermitian matrix with centered entries and variance matrix $ S$, i.e., $\mathbb{E}\,h_{ij}=0$, $ \mathbb{E}\mspace{2 mu}|h_{ij}|^2 = s_{ij}$. The matrix elements are independent up to the symmetry constraint, $ h_{ji}= \overline{h}_{ij} $. 
Let $ G(z)=(H-z)^{-1}$ be the resolvent of $H$ with a spectral parameter $ z \in \mathbb{H} $  and  matrix elements   $g_{ij} = g_{i j}(z)$. 
Second order perturbation theory indicates that
\begin{equation}
\label{Gii}
- \frac{1}{g_{ii}\!} \,\approx\,  z + \sum_{j=1}^N s_{ij} \1g_{j\msp{-1}j}
\,,
\end{equation}
(see \cite{EYY} in the special case when the sums $\sum_j s_{ij}$ are independent of $i$).
In particular, if \eqref{NSCE} is stable under small perturbations, then
   $ g_{ii} $  is close to $ m_i $ and the average $N^{-1}\sum_i m_i$ approximates the normalized trace of the resolvent, $ N^{-1}\mathrm{Tr}\, G$.
Being determined by $N^{-1} \mathrm{Im}\,\mathrm{Tr}\, G$, as $ \mathrm{Im}\,z \to 0 $, the empirical spectral measure of  $H$
approaches the non-random measure with density
\begin{equation}
\label{DOS}
\rho(\tau) \,:=\, 
\lim_{\eta\mspace{1 mu}\downarrow\mspace{1 mu} 0}
\frac{1}{\pi\1 N}
\sum_{i=1}^N
\mathrm{Im}\,m_i(\tau+\mathrm{i}\mspace{1 mu}\eta)
\,, \qquad \tau\in \mathbb{R}
\,,
\end{equation}
as $ N $ goes to infinity, see \cite{AZind,Guionnet-GaussBand,ShlyakhtenkoGBM}.

In the second example, we consider a random matrix of the form ${H}= {X}+{X}^*$ where 
  ${X}$ has {\it correlated entries} with a translation invariant correlation structure. 
As the dimension of $ {H} $ grows, its empirical spectral measure approaches  \cite{AEK3,BMP2013, Chakrabarty2014,KhorunzhyPastur94,PasturShcerbinaAMSbook} a non-random measure with density $ \rho $ defined through \eqref{DOS}.
In this setup $ m_i $ solves \eqref{NSCE} with $s_{ij}$ given by the Fourier transform of the correlation matrix 
(see Section~\ref{sec:trinv}).

Apart from a few specific cases, solving \eqref{NSCE} and  then computing $\rho(\tau)$ via \eqref{DOS}
is the only known effective way to determine the spectral density of these  large random matrices. Similarly, the limiting density, as  $N\to\infty$, is computed by solving a continuous (integral) version of \eqref{NSCE}.
Since much numerical and theoretical research  \cite{HachemCovSurvey2015,RaoPolyDOS,ChoiSiverstein1995}
focuses  on the density,
it is somewhat surprising that no detailed analytical study of \eqref{NSCE}  has been initiated so far which goes beyond establishing existence, uniqueness, and regularity in the regime where the parameter $z$ is away from the real axis \cite{AC-yau-students,Girko-book,Helton2007-OSE,PasturShcerbinaAMSbook}.
Typically, the limiting density is compactly supported  on a few disjoint intervals. 
Numerical studies \cite{BurdaVARMA,ErgunModGraph,KuhnModGraph,Menon-IEEEplots,RaoPolyDOS} indicate that generically the limiting density exhibits square root singularities at the edges of these intervals. However, prior to the current work this finding has never been rigorously confirmed apart from a few explicitly computable cases \cite{HachemCovSurvey2015,RaoPolyDOS,ChoiSiverstein1995}. 
Even less has been known about the possible formation of other singularities \cite{RaoPolyDOS}.
In a special Gaussian model the cubic singularity has been shown to emerge \cite{BH} as a gap in the support of the density closes. 
For random matrices with translation invariant correlation structure Theorem 2.4 of \cite{AZdep} shows that also the Stieltjes transform $m(z)$ of the limiting density satisfies a polynomial equation of the form $ P(\1z\1, m(z))=0$, but the degree of $ P $ is unspecified. Our result applies to the setup of \cite{AZdep} and limits the algebraic singularities to
 degrees two or three (Section~\ref{sec:color}).

 We remark that for invariant random matrix ensembles 
with a real analytic potential the singularities of the density have been classified \cite{DKMcL,KMcL,PasturSpecProb}. In the generic case the singularities at the edges of the support of the density are also of square root type.  For specific potentials the density may vanish at half-integer powers at the edges and any even power in the interior of the support
  but, in contrast to our result,  cubic root cusp singularities do not occur
for these ensembles.

So far we discussed {\it qualitative} properties of the spectral density on the macroscopic scale but 
the significance of \eqref{NSCE} goes well beyond that. First,
the algebraic order of the singularity of the limiting density at the edges predicts the typical scale of 
fluctuation of the extreme eigenvalues.
Second,
the recent proofs of the Wigner-Dyson-Mehta conjecture on the universality of local eigenvalue statistics heavily rely on  understanding the spectral density on very small scales (see \cite{EYBull} for a complete history).
 One key ingredient  to obtain such a  {\it local law} is a very accurate stability analysis   of the equation that  determines the spectral density, here \eqref{NSCE}. The stability deteriorates near the singularities
 and extracting the necessary information requires a thorough {\it quantitative} analysis of $m$ at these points.

Our project has three main parts. In the current paper we present general qualitative results on the singularities of  \eqref{NSCE}.  We believe that this analysis is of interest in its own  right since \eqref{NSCE} appears in other contexts as well, even beyond random matrices \cite{BolleNeri,KLW2,WegnerNorb}. The
quantitative description of the singularities together with the
 stability analysis, tailored to the specific needs to prove the local law, 
will be presented in details in \cite{AEK1}. Finally, 
the local law and spectral universality for the corresponding random matrices will be proved in  \cite{AEK2}
with a separate discussion of the correlated case in \cite{AEK3}.
The current paper is self-contained and random matrices will not
appear here; they were mentioned only to motivate the problem.

\section{Main result}\label{sec:main}

For a measurable space $\mathfrak{A}$ and a subset $\mathbb{D}\subseteq \C$ of the complex numbers, we denote by $\Bounded(\mathfrak{A},\mathbb{D})$ the space of bounded measurable functions on $\mathfrak{A}$ with values in $\mathbb{D}$. 
Let  $(\xSpace,\xMeasure(\dd x))$ be a measure space with bounded positive (non-zero) measure $\pi$. Suppose we are given a real valued  function $a\in \Bounded(\xSpace, \R)$ and a non-negative, symmetric, $s_{xy}=s_{yx}$, function $s\in \Bounded(\xSpace^2, \R_0^+)$. Then we consider the  \emph{quadratic vector equation (QVE)},
\begin{equation}
\label{QVE}
-\frac{1}{m(z)}\,=\, z+a+Sm(z)  \,,\qquad  z \in \Cp\,,
\end{equation}
for a function $m:\Cp \to \Bounded(\xSpace, \Cp), \, z \mapsto m(z)$, where $S:\Bounded(\xSpace, \C)\to \Bounded(\xSpace, \C)$ is the integral operator with kernel $s$,
\[
(Sw)_x\,:=\, \int s_{x y} \1w_y\1\xMeasure(\dd y)\,, \qquad x \in \xSpace\,,\; w \in \Bounded(\xSpace, \C)\,.
\]

We equip the space $\Bounded(\xSpace, \C)$ with its natural norm,
\[
\norm{w}\,:=\, \sup_{x \1\in\1\xSpace}\2\abs{w_x}\,, \qquad w \in \Bounded(\xSpace,\C)\,.
\]
With this norm $\Bounded(\xSpace, \C)$ is a Banach space. 
For an operator $T$ on $ \Bounded(\xSpace,\C)$ we write $\norm{T}$ for the induced operator norm. 

 The following result is considered folklore in the literature (see e.g. \cite{AZind,Girko-book,Helton2007-OSE,KLW2,KhorunzhyPastur94}). For completeness we include its proof, adjusted to our setup, in the  Appendix~\ref{appendix:Existence and uniqueness}.

\begin{proposition}[Existence and uniqueness]
\label{prp:Existence and uniqueness}
The QVE has a unique solution $m$. 
For each $ x \in \xSpace $ there exists a unique probability measure $ v_x(\dd \tau ) $ on $ \R $ such that
\begin{equation}
\label{m as stieltjes transform}
m_x(z) \,=\,\int_{\R} \frac{\1v_x(\dd \tau)\!}{\tau\1-\1z}
\,, 
\qquad   z \in \Cp 
\,.
\end{equation}
All these measures have support in the compact interval $[-\kappa,\kappa]$ with 
\begin{equation}
\label{definition of kappa}
\kappa \,:=\, \norm{a} + 2 \1\norm{S}^{1/2}.
\end{equation}
The family $(v_x)_{x \in \xSpace}$ constitutes a measurable function $ v : \xSpace \to \mathcal{M}(\R),\; x \mapsto v_x $, where $\mathcal{M}(\R) $ denotes the space of probability measures on $ \R $ equipped with the weak topology.
\end{proposition}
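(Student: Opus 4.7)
The plan is to treat the QVE at each fixed spectral parameter $z \in \Cp$ as a fixed-point equation in $\Bounded(\xSpace,\Cp)$, to transfer holomorphy in $z$ from the Picard iteration, to recognise the componentwise Stieltjes structure via the Nevanlinna--Herglotz theorem, and finally to confine the support of the $v_x$ by analysing the Laurent expansion of $m_x$ at infinity.

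First, at fixed $z \in \Cp$, I would consider the self-map
\[
\Phi_z(w)_x \,:=\, -\frac{1}{z + a_x + (Sw)_x}\,, \qquad w \in \Bounded(\xSpace,\Cp)\,.
\]
Non-negativity of $s$ forces $\mathrm{Im}(z + a_x + (Sw)_x) \geq \mathrm{Im}\, z > 0$, so $\Phi_z$ preserves $\Bounded(\xSpace,\Cp)$ with $\norm{\Phi_z w} \leq 1/\mathrm{Im}\, z$. Pointwise in $x$, $\Phi_z$ is a M\"obius transformation from $\Cp$ into a strict sub-half-plane, hence a strict contraction in the hyperbolic Poincar\'e metric; the $x$-supremum then makes $\Phi_z$ a strict contraction on $\Bounded(\xSpace,\Cp)$ equipped with the pulled-back metric, and Banach's fixed-point theorem produces a unique $m(z)$. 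Since every Picard iterate is holomorphic in $z$ and the iteration converges locally uniformly on $\Cp$, the map $z \mapsto m_x(z)$ is holomorphic with positive imaginary part, and reading the QVE as $|z| \to \infty$ yields the uniform-in-$x$ asymptotic $m_x(z) = -z^{-1} + O(|z|^{-2})$. Under this tail the Nevanlinna--Herglotz representation of the Herglotz function $m_x : \Cp \to \Cp$ collapses to a bare Stieltjes transform
\[
m_x(z) \,=\, \int_{\R} \frac{v_x(\dd \tau)}{\tau - z}\,,
\]
with $v_x$ a finite positive measure, and $\lim_{\eta \to \infty}(-\ii\eta)\, m_x(\ii\eta) = v_x(\R) = 1$ promotes $v_x$ to a probability measure.

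For the support bound I would expand $m_x(z) = -\sum_{n \geq 0} M_n(x)\, z^{-(n+1)}$ with $M_n(x) = \int \tau^n v_x(\dd \tau)$, substitute into the rearranged QVE $-1 = (z+a)m + m \cdot Sm$, and match powers of $z^{-1}$ to obtain $M_0 = 1$, $M_1 = -a$, and for $n \geq 2$
\[
M_n \,=\, -a\, M_{n-1} + \sum_{k+l = n-2} M_k \cdot S M_l\,.
\]
The scalar sequence $\mu_n := \sup_{x} |M_n(x)|$ then obeys $\mu_n \leq \norm{a}\,\mu_{n-1} + \norm{S}\sum_{k+l=n-2} \mu_k \mu_l$, and its generating function is dominated by the branch of $\norm{S}\,w^2 F^2 - (1 - \norm{a}\,w)\,F + 1 = 0$ with $F(0) = 1$, whose nearest singularity is the discriminant zero $w = (\norm{a} + 2\norm{S}^{1/2})^{-1} = 1/\kappa$. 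Hence $\mu_n \leq C\,\kappa^n$, and the moment characterisation of support (applied to the even moments, which coincide with their absolute values) forces $\mathrm{supp}\, v_x \subseteq [-\kappa, \kappa]$. Measurability of the map $x \mapsto v_x \in \mathcal{M}(\R)$ follows by initialising the Picard iteration from the measurable seed $w_0 \equiv \ii/\mathrm{Im}\, z$, inheriting joint Borel measurability in $x$ at every step, and passing to the limit; Stieltjes inversion then produces the claimed weak measurability.

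The most delicate point I anticipate is the support localisation: the hyperbolic contraction on $\Cp$ degenerates as $\mathrm{Im}\, z \to 0$, so the fixed-point argument alone cannot reach the real axis. The moment/comparison-quadratic route above bypasses this obstacle, but it depends on identifying $\kappa$ precisely as the branch point of a scalar equation, and it is exactly there that the constant $2$ multiplying $\norm{S}^{1/2}$ emerges, reflecting the quadratic rather than linear dependence of the QVE on $m$.
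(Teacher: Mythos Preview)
Your fixed-point argument for existence and uniqueness, including the hyperbolic-metric contraction and the transfer of holomorphy through the Picard iteration, is essentially the paper's approach (Appendix~A there; the paper sets up the contraction on a space of functions over a truncated half-plane $\Cp_\eta$ rather than at a single $z$, but this is only a packaging difference). One imprecision worth flagging: $\Phi_z(w)_x$ is \emph{not} pointwise a M\"obius transformation of $w_x$, since $(Sw)_x$ averages over all components of $w$. The contraction is rather the composition of three pieces: the positive linear functional $w \mapsto (Sw)_x$ contracts the hyperbolic metric by a convexity property, the shift $\zeta \mapsto \zeta + z + a_x$ is a strict contraction because $\im z > 0$, and $\zeta \mapsto -1/\zeta$ is an isometry. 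You also need to restrict to a subset with explicit upper and lower bounds (as the paper does with its $\mathfrak{B}_\eta$) to secure completeness of the pulled-back metric; the raw sup-hyperbolic metric on all of $\Bounded(\xSpace,\Cp)$ is not obviously complete.

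Your support argument via the moment recursion and a comparison generating function is genuinely different from the paper's. The paper instead establishes directly that $\norm{m(z)} \leq 2/(\abs{z}-\norm{a})$ for $\abs{z} > \kappa$ by a gap-in-values argument, and then feeds this into the imaginary part of the QVE to show $\norm{\im m(\tau+\ii\eta)} \to 0$ uniformly for $\abs{\tau} > \kappa+\eps$. Your route is viable and has the appeal that $\kappa$ appears transparently as the branch point of the scalar quadratic, but as written it is circular: you expand $m_x(z) = -\sum_n M_n(x)\,z^{-(n+1)}$ with $M_n(x) = \int \tau^n\, v_x(\dd\tau)$, which presupposes that all moments of $v_x$ are finite, i.e.\ that the support is already bounded. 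The fix is to reverse the logic: define the $M_n \in \Bounded(\xSpace,\R)$ purely by the recursion (starting from $M_0 = 1$, $M_1 = -a$), derive $\sup_x \abs{M_n(x)} \leq \nu_n$ with $\limsup \nu_n^{1/n} \leq \kappa$ from your generating-function quadratic, form the convergent series $\widetilde m_x(z) := -\sum_n M_n(x)\,z^{-(n+1)}$ on $\{\abs{z}>\kappa\}$, observe that $\widetilde m$ solves the QVE there and lies in $\Bounded(\xSpace,\Cp)$ for large $\im z$, and invoke uniqueness to get $\widetilde m = m$ on that region. Analytic continuation then gives holomorphy of $m_x$ on $\{\abs{z}>\kappa\}$, hence $\mathrm{supp}\,v_x \subseteq [-\kappa,\kappa]$; only a posteriori are the $M_n$ identified as the moments of $v_x$.
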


Since by \eqref{m as stieltjes transform} the measure $v_x$ determines the component $m_x$ of the solution to the QVE, we consider this measure as a fundamental quantity and often express properties of $m$ in terms of properties of $v$. 

\begin{definition}[Generating measure]
The family of probability measures $v=(v_x)_{x \in \xSpace}$, uniquely defined by the relation \eqref{m as stieltjes transform}, is called the \emph{generating measure} of the QVE. In case all measures $v_x$ admit a bounded Lebesgue-density, $v_x(\dd \tau)=v_x(\tau)\dd \tau$, the (almost everywhere defined) function $v: \R \to \Bounded(\xSpace,\R_0^+),\, \tau \mapsto v(\tau) $  is called the  \emph{generating density} of the QVE.
\end{definition}
By a trivial rescaling we may from now on assume that $\pi$ is a probability measure, $\xMeasure(\R)=1$. We introduce a short notation for the average of a function on $\xSpace$ with respect to this measure
\[
\avg{\1w\1}\,:=\, \int w_x \1\xMeasure(\dd x)\,, \qquad w \in \Bounded(\xSpace,\C)\,.
\]
We need three assumptions on the data $ a $ and $s$ of the QVE. 

\begin{enumerate}
\item[{\bf (A)}] \emph{Diagonal positivity}: There exists a symmetric, $ r_{yx} = r_{xy} $, function  $r\in \Bounded(\xSpace^2,\R_0^+)$ such that 
\begin{equation}
\label{positive diagonal}
s_{x y}\,\geq\, \int r_{x u}\1r_{u y}\1\xMeasure(\dd u)\,, \qquad x,y \in \xSpace\,,
\end{equation}
and $\inf_{x \in \xSpace}\int r_{x y} \1\xMeasure(\dd y)>0$.
\item[{\bf (B)}] \emph{Uniform primitivity}: There is some $K \in \N$ such that
\begin{equation}
\inf_{x,y \in \xSpace}s^{(K)}_{x y}\,>\,0\,,
\end{equation}
where $s^{(K)}\in \Bounded(\xSpace^2,\R_0^+)$ is the integral kernel of the $K$-th power $S^{\1K}$ of the operator $S$.
\item[{\bf (C)}] \emph{Component regularity}:  There are no outlier components in the sense that
\begin{equation}
\label{component regularity}
\lim_{\eps\downarrow 0}\inf_{x \in \xSpace}\int \frac{\xMeasure(\dd y)}{\eps+(a_x-a_y)^2+\avg{\1(S_x-S_y)^2}}\,=\, \infty \,,
\end{equation}
where $ S_x := (y \mapsto s_{x y}) \in \Bounded(\xSpace, \R_0^+)$ for each $ x \in \xSpace$ denotes the $ x$-th component of $ S $.
\end{enumerate}

In the following remarks we mention a few examples that illustrate the meaning of the assumptions \textbf{(A)}, \textbf{(B)} and \textbf{(C)}. The statements of these remarks are easy to check. 
\begin{remark} If $\xSpace$ is equipped with a metric $d_\xSpace$ and $s$ has a positive strip along the diagonal in the sense that
\[
s_{x y}
\,\geq\,c\1\mathbbm{1}(d_\xSpace(x,y)\leq \eps)\,,\qquad x,y \in \xSpace\,,
\]
for some positive constants $c$ and $\eps$, then \eqref{positive diagonal} is satisfied with the choice
\[
r_{x y}\,:=\, c^{1/2}\mathbbm{1}(d_\xSpace(x,y)\leq \eps/2)\,,\qquad x,y \in \xSpace\,.
\]
\end{remark}

\begin{remark} If $\xSpace$ admits a finite measurable partition, $\mathfrak{I}_1, \dots, \mathfrak{I}_n$, such that each $\mathfrak{I}_i$ has positive measure and there is a primitive matrix $P=(p_{i j})_{i,j=1}^{n} \in \R^{n \times n}$ such that $s$ satisfies
\[
s_{x y}\,\geq\, \sum_{i,j=1}^{n}p_{i j}\mathbbm{1}(x \in \mathfrak{I}_i\1,\2 y \in \mathfrak{I}_j)\,,
\qquad x,y \in \xSpace\,,
\]
then \textbf{(B)} holds. Recall that a matrix $P$ is called \emph{primitive}, if it has non-negative entries and there exists $k \in \N$ such that all entries of $P^k$ are strictly positive. If, additionally, the matrix $P$ has a positive diagonal, $p_{ii}>0$, then assumption \textbf{(A)} is satisfied as well.
\end{remark}

\begin{remark}  Assumption \textbf{(C)} is trivially satisfied if $\xSpace$ is finite and every $x \in \xSpace$ has positive measure.
The integral in \eqref{component regularity} provides a way to measure the regularity of $s$ and $a$ as the following example indicates. 
Consider the setup $(\xSpace, \xMeasure(\dd x))=([0,1], \dd x) $ and let $I_1, \dots, I_n$ be a partition of $[0,1]$ into finitely many non-trivial intervals.  Suppose  $a$ and $s$ are piecewise $1/2$-H\"older continuous,
\begin{equation}
\label{piecewise Holder}
\sup_{x,\1y \1\in\1 I_i}\frac{\abs{a_x-a_y}}{\msp{10}\abs{x-y}^{1/2}\msp{-5}}\,<\, \infty
\,,\quad 
\sup_{\genfrac{}{}{0pt}{1}{x_1,\2x_2 \in I_i}{y_1,\2y_2 \in I_j}}\frac{\abs{s_{x_1 y_1}-s_{x_2 y_2}}}{\abs{x_1-x_2}^{1/2}+\abs{y_1-y_2}^{1/2}}\,<\, \infty
\,,
\end{equation}
for all $i,j=1,\dots,n$.
Then \textbf{(C)} is satisfied.
\end{remark}

Now we state our main theorem. 

\begin{theorem}[Regularity and singularities of the generating density]
\label{thr:Regularity and singularities of the generating density}
Suppose $s$ and $a$ satisfy the assumptions \textbf{(A)}, \textbf{(B)} and  \textbf{(C)}. Then the generating measure has a Lebegue-density $v_x(\dd \tau)=v_x(\tau)\dd \tau$ and this density is uniformly $1/3$-H\"older continuous
\[
\sup_{x \in \xSpace}\sup_{\tau_1\neq \tau_2}\frac{\abs{\1v_x(\tau_1)-v_x(\tau_2)}}{\abs{\tau_1-\tau_2}^{1/3}}\,< \, \infty\,.
\]
The set on which the $x$-th component of the generating density is positive is independent of $x$ (and therefore the component is not indicated),
\begin{equation}
\label{definition of set S}
\mathfrak{S}\,:=\,\bigl\{\tau \in \R:\; v_x(\tau)\1>\10\1\bigr\}
\,.
\end{equation}
It  is a union of finitely many open intervals.
The restriction of the generating density $v: \R\backslash\partial \mathfrak{S} \to \Bounded(\xSpace,\R_0^+), \tau \mapsto v(\tau)$ is analytic in $\tau$. 
At the points $\tau_0 \in \partial \mathfrak{S}$ the generating density has one of the following two behaviors:
\begin{description}
\item[Cusp] If $\tau_0$ is in the intersection of the closure of two connected components of $\mathfrak{S}$, then $v$ has a cubic root singularity at $\tau_0$, i.e., there is some $c\in \Bounded(\xSpace, \R)$ with $\inf_{x \in \xSpace}c_x>0$ such that uniformly in $x\in \xSpace$,
\begin{equation}
\label{cusp singularity}
v_x(\tau_0+ \omega)\,=\, c_x\2\abs{\omega}^{1/3}+\ord(\abs{\omega}^{2/3})\,, \quad \omega \to 0\,.
\end{equation}
\item[Edge] If $\tau_0 \in \partial \mathfrak{S}$ is not a cusp, then it is the left or right endpoint of a connected component of $\mathfrak{S}$ and $v$ has a square root singularity at $\tau_0$, i.e., there is some $c\in \Bounded(\xSpace, \R)$ with $\inf_{x \in \xSpace}c_x>0$ such that uniformly in $x\in \xSpace$,
\begin{equation}
\label{edge singularity}
v_x(\tau_0\pm \omega)\,=\, c_x\2\omega^{1/2}+\ord(\omega)\,, \quad \omega \downarrow 0\,,
\end{equation}
where $\pm$ is taken depending on whether $\tau_0$ is a left or a right endpoint. 
\end{description}
\end{theorem}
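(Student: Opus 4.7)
My plan is to combine an implicit function theorem argument on the interior of the support with a normal form analysis at the boundary, both hinging on a Perron--Frobenius type analysis of the symmetrised linearisation of the QVE. Since Proposition \ref{prp:Existence and uniqueness} already provides the Stieltjes representation, the density of $v_x$ at $\tau \in \R$ equals $\pi^{-1}\mathrm{Im}\,m_x(\tau+\ii 0)$, so everything reduces to understanding how $m(z)$ behaves as $\mathrm{Im}\,z \downarrow 0$. The formal derivative of the QVE is $(D_{1/m^2}-S)\1\partial_z m = \mathbf{1}$, where $D_w$ denotes pointwise multiplication; after symmetrisation the central spectral object becomes
\begin{equation*}
F(m) \,:=\, D_{\abs{m}}\2 S\2 D_{\abs{m}} \1.
\end{equation*}
Assumptions \textbf{(A)} and \textbf{(B)} ensure that, for the physical $m=m(z)$, this symmetric non-negative operator is primitive with a simple top eigenvalue and a strictly positive Perron eigenvector.

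Interior analyticity is the easier step. Inside the support, a pairing argument using $\mathrm{Im}\,m>0$ and the positivity of $s$ shows that $D_{1/m^2}-S$ is invertible on $\Bounded(\xSpace,\C)$; this is the standard smoothing mechanism of a Stieltjes transform away from its spectrum. The analytic implicit function theorem applied to the QVE, viewed as $\Phi(m,z)=0$ with $\Phi(m,z):=1+m(z+a)+m\1 Sm$, then yields a holomorphic extension of $m$ across any $\tau_0 \in \R$ with $v_{x_0}(\tau_0)>0$; this gives real-analyticity of $v$ at such points and openness of $\mathfrak{S}$. Assumption \textbf{(B)} forces $\mathrm{Im}\,m_x(\tau_0+\ii 0)>0$ for \emph{every} $x$ as soon as it holds for one, because the kernel $s^{(K)}$ couples all components uniformly; this yields the claimed independence of $\mathfrak{S}$ from $x$.

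Boundary behaviour is the heart of the argument. Let $\tau_0 \in \partial\mathfrak{S}$ and set $m^\ast := m(\tau_0+\ii 0) \in \Bounded(\xSpace, \R)$. From the equation satisfied by $\mathrm{Im}\,m$ together with $\mathrm{Im}\,m(\tau_0+\ii 0)=0$ one deduces that $F^\ast := F(m^\ast)$ has largest eigenvalue exactly $1$, with unique Perron eigenvector $f\in \Bounded(\xSpace, \R)$ and $\inf_x f_x>0$. Assumptions \textbf{(A)} and \textbf{(B)} together produce a uniform spectral gap below $1$ for $F^\ast$ on all of $\partial\mathfrak{S}$, and \textbf{(C)} promotes the relevant bounds to be uniform in $x$. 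I then expand
\begin{equation*}
m(\tau_0+\omega+\ii\eta) \,=\, m^\ast + X\1 f + r \1,
\end{equation*}
with $r$ in the spectral complement of $f$ in an $f$-weighted scalar product. The spectral gap lets me solve for $r$ analytically as a function of $(X,\omega+\ii\eta)$ with $r = \ord(\abs{X}^2+\abs{\omega})$. Substituting back into the QVE and projecting onto $f$ yields, after exploiting the symmetry $S=S^\ast$, a scalar cubic
\begin{equation*}
\Psi_3\1 X^3 + \Psi_2\1 X^2 + \Psi_1(\omega+\ii\eta)\1 X + \Psi_0(\omega+\ii\eta) + \ord\pb{\abs{X}^4 + \abs{\omega}^{4/3}} \,=\, 0 \1,
\end{equation*}
with real coefficients $\Psi_j$ computable in terms of $m^\ast$, $f$, $a$, $S$; the strict positivity of $f$ forces $\Psi_3, \Psi_0>0$. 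When $\Psi_2\neq 0$ the equation reduces at leading order to $\Psi_2 X^2 + \Psi_0\1\omega\approx 0$, yielding the square root edge behaviour on exactly one side of $\tau_0$ (determined by the sign of $\Psi_2$). When $\Psi_2 = 0$ (which is \emph{forced} if $\tau_0$ lies in the closure of two distinct components of $\mathfrak{S}$, since two square-root edges cannot match), a further symmetry argument makes $\Psi_1$ vanish as well, and the equation reduces to $\Psi_3 X^3 + \Psi_0\1\omega\approx 0$, whose $\Cp$-branch gives the cubic root cusp on both sides. In either case the coefficient $c_x$ in the theorem is a positive multiple of $f_x$, yielding the required uniform positivity.

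The remaining claims follow: boundary points are isolated by the local expansions above, and since $\mathfrak{S}\subset[-\kappa,\kappa]$ is bounded, it has only finitely many connected components; the global uniform $1/3$-H\"older continuity of $v$ comes from the edge and cusp asymptotics (worst exponent $1/3$) combined with real-analyticity of $v$ away from $\partial\mathfrak{S}$ and the uniform bounds supplied by \textbf{(C)}. I expect the main obstacle to be the normal form step: establishing the uniform spectral gap of $F^\ast$ on $\partial\mathfrak{S}$, computing the $\Psi_j$ precisely enough to verify the sign structure, and especially proving that a two-sided touching of components forces both $\Psi_2=0$ and the $\omega X$ coefficient to vanish, so that no intermediate algebraic exponents other than $1/2$ and $1/3$ can occur. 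Assumption \textbf{(A)} is decisive for the uniform lower bound on $\Psi_3$, which is precisely what excludes algebraic singularities of degree higher than three.
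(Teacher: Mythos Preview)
Your overall strategy matches the paper's: symmetrise the linearisation to the operator $F=D_{|m|}SD_{|m|}$, use Perron--Frobenius to isolate the top eigendirection $f$, project the QVE onto $f$ to obtain a scalar cubic, and read off the $1/2$ and $1/3$ exponents from that cubic. The paper's Proposition~\ref{prp:Cubic equation} is exactly your normal-form step. However, several points in your plan are either circular or miss real work:

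\textbf{H\"older continuity must come first, not last.} You propose to derive the uniform $1/3$-H\"older bound \emph{a posteriori} from the edge/cusp asymptotics. This is backwards: the derivation of the cubic normal form already needs $|X|\lesssim|\omega|^{1/3}$ as an \emph{a priori} input to control the error terms (your $\ord(|\omega|^{4/3})$ presupposes it). The paper establishes H\"older continuity independently and beforehand (Proposition~\ref{prp:regularity}) via the differential inequality $|\partial_z\im m|\lesssim\avg{\im m}^{-2}$, which in turn rests on the stability bound $\norm{(1-m^2S)^{-1}}\lesssim\avg{\im m}^{-2}$ (Lemma~\ref{lmm:Bounds on B-inverse}). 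Without this, you cannot even extend $m$ continuously to the real line, so $m^\ast$ is undefined.

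\textbf{The $\omega X$ coefficient need not vanish.} You worry about forcing $\Psi_1=0$ at a cusp. This is unnecessary: when $X\sim|\omega|^{1/3}$, the term $\Psi_1\omega X$ is of order $|\omega|^{4/3}$, strictly smaller than the leading $\Psi_3 X^3\sim|\omega|$. The paper simply absorbs it into the error $e(\omega)$ satisfying $|e|\lesssim|\omega||\Theta|+|\omega|^2$; no symmetry argument is invoked.

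\textbf{Branch selection is genuine work.} Saying ``whose $\Cp$-branch gives the cubic root cusp'' skips the hardest step. At a point with $\Psi_2=0$ the cubic has three solution branches $\widehat\Theta_0,\widehat\Theta_\pm$; the constraint $\im\Theta\ge 0$ rules out one on each side but still leaves the real branch $\widehat\Theta_0$. The paper excludes $\widehat\Theta_0$ by showing it would force $\im\Theta\equiv 0$ on a one-sided neighbourhood \emph{and} make the real part of $m$ decrease there, contradicting monotonicity of a Stieltjes transform in gaps of the support. An analogous argument at edges shows there is an actual gap on the $-\mathrm{sign}(\Psi_2)$ side. You need this mechanism; positivity alone does not pick the branch.

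\textbf{The stability bound is $\psi+\sigma^2\sim 1$, not $\psi\gtrsim 1$.} Your claim that positivity of $f$ forces $\Psi_3>0$ is too strong: the cubic coefficient is $\psi=\mathcal{D}_\tau(f^2\mathrm{sign}\,m)\ge 0$, but it may well be small at an edge. What the paper proves (and what excludes higher-order singularities) is the combined lower bound $\psi+\sigma^2\gtrsim 1$, coming from the spectral gap of $F$ and $\norm{f^2\mathrm{sign}\,m}_2\ge 1$. Only at cusps, where $\sigma=0$, does this yield $\psi\sim 1$. Also, the logical direction runs from $\sigma$ to geometry, not the reverse: one first shows $\sigma=0\Rightarrow$ two-sided $|\omega|^{1/3}$ growth and $\sigma\ne 0\Rightarrow$ one-sided $|\omega|^{1/2}$ growth with a gap, and \emph{then} matches this to the cusp/edge dichotomy in the statement.
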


Let us denote the extended upper half plane by $ \overline{\Cp} :=\Cp \cup\R $.

\begin{corollary}[H\"older-regularity of the solution]
\label{crl:Holder-regularity of the solution}
Assume  that $s$ and $a$ satisfy \textbf{(A)}, \textbf{(B)} and  \textbf{(C)}. Then
the solution $m_x(z)$ of the QVE can be uniquely extended to a function $m \in\Bounded(\xSpace\times \overline{\Cp}, \overline{\Cp}) $ that is uniformly $1/3$-H\"older continuous in its second argument, $z \in \overline{\Cp}$,
\begin{equation}
\label{Holder continuity of m}
\sup_{x \in \xSpace}\sup_{z_1\neq z_2}\frac{\abs{\1m_x(z_1)-m_x(z_2)}}{\abs{z_1-z_2}^{1/3}}\,< \, \infty\,.
\end{equation}
\end{corollary}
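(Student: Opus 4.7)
The plan is to deduce the corollary from Theorem~\ref{thr:Regularity and singularities of the generating density} by invoking classical results on the boundary behavior of Cauchy/Stieltjes transforms of H\"older continuous densities --- the Plemelj--Sokhotski formulas for the existence of boundary values and the quantitative Plemelj--Privalov theorem for their H\"older regularity.

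First I would record the following direct consequence of Proposition~\ref{prp:Existence and uniqueness} and Theorem~\ref{thr:Regularity and singularities of the generating density}: each component $m_x$ is the Stieltjes transform of a probability measure $v_x$ supported in $[-\kappa,\kappa]$, and $v_x(\dd\tau)=v_x(\tau)\dd\tau$ with a density that, when extended by zero outside $[-\kappa,\kappa]$, is $1/3$-H\"older continuous on $\R$ with a H\"older seminorm bounded uniformly in $x\in\xSpace$. Since $v_x$ is H\"older continuous and compactly supported, the Plemelj--Sokhotski formula provides boundary values
\[
m_x(\tau)\,:=\,\lim_{\eta\downarrow 0} m_x(\tau + \ii\eta)\,=\,\ii\pi v_x(\tau)+\mathrm{P.V.}\!\int_\R\frac{v_x(s)}{s-\tau}\1\dd s\,, \qquad \tau\in\R\,,
\]
with the principal value integral converging. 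This produces a continuous extension of $m_x$ from $\Cp$ to $\overline{\Cp}$, and uniqueness of the extension is immediate from the density of $\Cp$ in $\overline{\Cp}$ together with the continuity of the extension.

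The uniform H\"older bound \eqref{Holder continuity of m} is then the content of the quantitative Plemelj--Privalov theorem: for any $\alpha\in(0,1)$ and any compactly supported $\alpha$-H\"older continuous $f:\R\to\R$, the Cauchy transform $z\mapsto\int f(s)/(s-z)\dd s$ extends to an $\alpha$-H\"older continuous function on $\overline{\Cp}$ whose H\"older seminorm is controlled by $\norm{f}$, the $\alpha$-H\"older seminorm of $f$, and the diameter of the support of $f$. Applied with $f=v_x$ and $\alpha=1/3$, all of these quantities are bounded uniformly in $x$ by Theorem~\ref{thr:Regularity and singularities of the generating density} and Proposition~\ref{prp:Existence and uniqueness}, yielding \eqref{Holder continuity of m} with a constant independent of $x$.

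The main obstacle is the Plemelj--Privalov estimate itself in the regime where both $z_1$ and $z_2$ approach the real axis. The standard argument splits the difference $m_x(z_1)-m_x(z_2)$ according to whether $s$ lies within or beyond a distance of order $\abs{z_1-z_2}$ of $\{z_1,z_2\}$: on the local part one exploits the H\"older continuity of $v_x$ (subtracting $v_x(\mathrm{Re}\1z_i)$ inside the integral) together with the integrability of $1/\abs{s-z_i}$ near the singularity, while on the complement one writes the kernel difference as $(z_1-z_2)/\p{(s-z_1)(s-z_2)}$ and uses $\norm{v_x}<\infty$ with the compact support of $v_x$. Both contributions are then $\ord(\abs{z_1-z_2}^{1/3})$ uniformly in $x$ and in $z_1,z_2\in\overline{\Cp}$.
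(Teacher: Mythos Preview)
Your proposal is correct and follows essentially the same route as the paper: the paper's proof simply invokes the fact (stated as a lemma and referenced to Muskhelishvili) that the Stieltjes transform of a uniformly $\alpha$-H\"older continuous function is again uniformly $\alpha$-H\"older continuous on $\overline{\Cp}$, which is precisely the Plemelj--Privalov statement you name and sketch. Your additional outline of the near/far splitting is more detailed than what the paper provides, but the underlying argument is the same.
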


\begin{theorem}[Single interval support]
\label{thr:Single interval support}
Assume \textbf{(A)}, \textbf{(B)} and \textbf{(C)}. Furthermore, suppose that the components $a_x$ and $S_x=(y \mapsto s_{xy}) \in \Bounded(\xSpace,\R_0^+)$ form a connected set in the sense that
\begin{equation}
\label{connected rows}
\sup_{\emptyset\neq\1\mathfrak{A} \subsetneq \xSpace}\inf_{\genfrac{}{}{0pt}{1}{x \in \mathfrak{A}}{y \not \in \mathfrak{A}}}\;
\pb{\2\abs{a_x-a_y}+\avg{\1\abs{S_x-S_y}\1}}\,=\, 0\,.
\end{equation}
Then the set $\mathfrak{S}$, defined in \eqref{definition of set S}, is a single interval. In particular, there are no cusp singularities in the sense of \eqref{cusp singularity}.
\end{theorem}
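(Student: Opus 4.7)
The plan is by contradiction. Suppose $\mathfrak{S}$ has at least two connected components. By Theorem~\ref{thr:Regularity and singularities of the generating density}, between two neighboring components one finds either a cusp (a single point in $\overline{\mathfrak{S}}_1 \cap \overline{\mathfrak{S}}_2$) or a nontrivial open gap; in either scenario I pick a point $\tau_0 \in \R \setminus \mathfrak{S}$ with $v_x(\tau_0) = 0$ for all $x$ (the cusp point, or any interior point of the gap). At such $\tau_0$ one has $\Im m_x(\tau_0) = \pi v_x(\tau_0) = 0$, so $m(\tau_0) \in \Bounded(\xSpace, \R)$; and $m_x(\tau_0) \neq 0$ for every $x$, for otherwise the left-hand side of \eqref{QVE} evaluated at $z = \tau_0$ would blow up while the right-hand side $\tau_0 + a_x + (Sm(\tau_0))_x$ is uniformly bounded (using that $\|m\|_\infty < \infty$ by Proposition~\ref{prp:Existence and uniqueness} and Corollary~\ref{crl:Holder-regularity of the solution}).

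Partition $\xSpace = \mathfrak{A}_+ \sqcup \mathfrak{A}_-$ by the sign of $m_x(\tau_0)$. In the \emph{mixed-sign} case, assume both $\mathfrak{A}_\pm$ are nonempty. Applying \eqref{connected rows} with the nontrivial partition $\mathfrak{A} := \mathfrak{A}_+$ yields, for any $\eps > 0$, a pair $x \in \mathfrak{A}_+$, $y \in \mathfrak{A}_-$ with $|a_x - a_y| + \avg{|S_x - S_y|} < \eps$. Subtracting the QVE at $y$ from the QVE at $x$ evaluated at $z = \tau_0$,
\[
\frac{1}{m_y(\tau_0)} - \frac{1}{m_x(\tau_0)} \;=\; (a_x - a_y) \,+\, \int (s_{x z} - s_{y z})\1 m_z(\tau_0)\,\xMeasure(\dd z)\,,
\]
the absolute value of the right-hand side is at most $(1 + \|m\|_\infty)\eps$. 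Because $m_x(\tau_0) > 0$ and $m_y(\tau_0) < 0$, the absolute value of the left-hand side equals $|m_x(\tau_0)|^{-1} + |m_y(\tau_0)|^{-1} \geq 2/\|m\|_\infty$, a fixed positive constant. Taking $\eps$ smaller than $2/(\|m\|_\infty(1 + \|m\|_\infty))$ produces the desired contradiction.

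It remains to rule out the \emph{single-sign} case. After the reflection $\tau \mapsto -\tau$, $a \mapsto -a$, $m(z) \mapsto -m(-z)$ (which preserves \eqref{QVE} and reflects $\mathfrak{S}$), it suffices to handle $m_x(\tau_0) > 0$ for all $x$. This will be the main obstacle, because \eqref{connected rows} is now vacuous. My plan here is to exploit the Perron--Frobenius structure built into the proof of Theorem~\ref{thr:Regularity and singularities of the generating density}: at the singular boundary point $\tau_0$ the expansion on the support side forces $1$ to be the Perron eigenvalue of $|M(\tau_0)|\1 S\1|M(\tau_0)|$, with the strictly positive Perron eigenvector proportional to the edge/cusp amplitude $(c_x)_{x \in \xSpace}$ from \eqref{edge singularity}/\eqref{cusp singularity}; the uniform positivity $\inf_x c_x > 0$ comes from the primitivity assumption (B) combined with diagonal positivity (A). Combining this spectral identity with the constraint $\tau_0 + a_x + (Sm(\tau_0))_x = -1/m_x(\tau_0) < 0$ (which, since $(Sm(\tau_0))_x \geq 0$, forces $\tau_0 < -\sup_x a_x$) and the global sign behavior of $m$ on the real line ($m_x > 0$ for $\tau < \inf\mathfrak{S}$, $m_x < 0$ for $\tau > \sup\mathfrak{S}$), one tracks the Perron eigenvector along the real axis through the adjacent support components and derives a contradiction with the uniform positivity of the edge amplitudes at the two boundaries of the gap. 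The delicate step is this single-sign analysis; the mixed-sign step above is a direct consequence of the connectedness hypothesis and the QVE.
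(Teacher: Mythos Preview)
Your mixed-sign step is correct and is exactly what the paper does: subtracting the QVE at two points $x,y$ with opposite signs of $m_x(\tau_0),m_y(\tau_0)$ and using $\abs{m}\sim 1$ gives a uniform positive lower bound on $\abs{a_x-a_y}+\avg{\abs{S_x-S_y}}$, contradicting \eqref{connected rows}. So at every $\tau_0\in\partial\mathfrak{S}$ the sign of $m(\tau_0)$ is constant, say $\theta\in\{\pm1\}$.

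The single-sign step, however, is not a proof as written. Your sketch invokes ``tracking the Perron eigenvector through the adjacent support components'' and ``deriving a contradiction with the uniform positivity of the edge amplitudes,'' but no concrete mechanism is given, and it is not clear what the contradiction would actually be. The inequality $\tau_0<-\sup_x a_x$ you derive is correct but not obviously useful by itself.

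The paper's argument here is short and you are missing its key idea. Take $\tau_0\in\partial\mathfrak{S}$ (not an arbitrary gap point). Since $\mathrm{sign}\,m(\tau_0)=\theta$, the coefficient $\sigma(\tau_0)=\avg{f(\tau_0)^3\,\mathrm{sign}\,m(\tau_0)}=\theta\avg{f(\tau_0)^3}\neq 0$, so by the edge expansion \eqref{v behavior at the edge} the point $\tau_0$ is an edge and a nontrivial gap $I$ opens in the direction $-\theta$. On $I$ the sign of $m(\tau)$ stays equal to $\theta$ by continuity and $\abs{m}\gtrsim 1$. Now comes the crucial monotonicity: each $m_x$ is the Stieltjes transform of a nonnegative measure supported off $I$, hence $\tau\mapsto m_x(\tau)$ is strictly increasing on $I$, so $\tau\mapsto\abs{m_x(\tau)}$ is strictly monotone (decreasing if $\theta=-1$, increasing if $\theta=+1$) as $\tau$ moves away from $\tau_0$ into $I$. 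Since $F(\tau)=\abs{m(\tau)}\,S\,\abs{m(\tau)}$, the spectral radius $\norm{F(\tau)}_2$ is strictly monotone on $I$ and therefore strictly less than $\norm{F(\tau_0)}_2=1$ for every $\tau\in I\setminus\{\tau_0\}$. But $\norm{F(\tau)}_2=1$ at every point of $\partial\mathfrak{S}$ (this is \eqref{spectral radius 1 at singularity}), so $I$ cannot contain another boundary point of $\mathfrak{S}$; hence $I$ is a half-line and $\tau_0$ is the extreme edge of $\mathfrak{S}$ on that side. This finishes the proof.

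In short: you correctly reduced to the single-sign case, but the decisive observation you are missing is the strict monotonicity of $\abs{m}$ (and hence of $\norm{F}_2$) inside a spectral gap, coming directly from the Stieltjes representation, which immediately rules out a second boundary point in that gap.
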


\begin{remark} Condition \eqref{connected rows} is for example satisfied in the special case when $\xSpace=[0,1]$ and the kernel $s$  as well as $a$ are  continuous.  More generally, we may consider the piecewise H\"older continuous setting. We conjecture that in this case with $a=0$ the number of connected components of $\mathfrak{S}$ is at most $2n-1$, where $n$ is the size of the partition $I_1, \dots, I_n$ used to define the piecewise H\"older continuity in \eqref{piecewise Holder}. 
\end{remark}

\section{Applications}

In this section we discuss four applications in random matrix theory.  We denote by $H=H^{(N)}$ for $N \in \N$, a sequence of self-adjoint random matrices with entries $h_{i j}=h_{i j}^{(N)}$ on some probability space with expectation  $\mathbb{E} $. 
We define the induced normalized empirical spectral measures by
\[
\rho^{(N)}\msp{-2}(B)\,:=\,\frac{\abs{\1\mathrm{Spec}(\1H^{(N)}) \cap\, B\1}}{N}
\,, 
\]
for Borel sets $ B $ of $  \R $.
We denote by $\rho_{S\msp{-1},a}(\dd \tau)$ the average generating measure for the QVE with operator $ S $ and function $ a $, i.e., 
\[
\rho_{S\msp{-1},a}(\dd \tau)\,:=\, \int_{\xSpace} v_x(\dd \tau)\2\xMeasure(\dd x)\,.
\]

\subsection{Wigner type matrices}\label{sec:wigner}

A natural extension of Wigner matrices, which have i.i.d. entries up to symmetry constraints, are what  was called \emph{Wigner type} matrices  in \cite{AEK2}. These matrices 
 $H$
are self-adjoint, centered, $\E\2h_{i j}=0$, and have independent entries up to the symmetry constraints, i.e., $h_{i j}$ are independent for $i \leq j$. 
Furthermore, let $ N\,\abs{h_{ij}}^2 $ be uniformly integrable. 
Suppose for the sake of simplicity that the variances of the entries of $ H $ converge  to a piecewise $1/2$-H\"older continuous (cf. \eqref{piecewise Holder}), symmetric, $q(x,y)=q(y,x) $, profile function $q:[0,1]^2\to \R^+_0 $ with a non-vanishing diagonal, $\inf_{\abs{x-y}\leq \eps}q(x,y)>0$ for some $\eps>0 $, i.e., 
\[
\E\2 \abs{\1h_{i j}}^2\,=\, \frac{1}{N}\,q\Big(\frac{i}{N} , \frac{j}{N}\Big)
\,.
\]
Then the empirical spectral measures of the matrices $H^{(N)}$ converge, as $N \to \infty$, to a non-random limit,
\begin{equation}
\label{convergence of rhoN to rhoS}
\rho^{(N)}(\dd \tau)\,\to\, \rho_S(\dd \tau)\,,\qquad \text{weakly, in probability}
\,.
\end{equation}
Here, the asymptotic spectral measure $\rho_S = \rho_{S,0}$ is obtained from the solution of the QVE in the setup $ (\xSpace,\xMeasure(\dd x)) = ([0,1],\dd x)$, where $a=0$, and the integral kernel of $ S $ is given by the asymptotic variance profile $s_{x y}:=q(x,y)$. 
For a proof of \eqref{convergence of rhoN to rhoS} see \cite{Guionnet-GaussBand,ShlyakhtenkoGBM} (in the Gaussian setting),  \cite{AZind} (with the additional condition that the fourth moments have a profile), and \cite{AEK2} (with bounded higher moments).
Bounded moment conditions can be relaxed using a standard cut-off argument (cf. Theorem 2.1.21 in \cite{IntroductionRM}). 

Using Theorem \ref{thr:Regularity and singularities of the generating density}, we can say more about the limiting eigenvalue distribution $\rho_S$. In fact, $\rho_S(\dd \tau)=\rho_S(\tau)\1 \dd \tau$ has a H\"older-continuous density  with singularities of degree at most three in the sense of \eqref{cusp singularity} and \eqref{edge singularity}. Moreover, if $q$ is $1/2$-H\"older continuous (not just piecewise H\"older continuous), then by Theorem \ref{thr:Single interval support} the limiting spectral density $\rho_S$ is supported on a single interval $[-\tau_0,\tau_0]$ and has square root singularities at the edges $-\tau_0$ and $\tau_0$.

In general, cusps may appear  already in the simplest non-trivial examples. This is illustrated by the $2\times 2$-block profile 
\begin{equation}
\label{profile function}
q\;:=\; \alpha\1\mathbbm{1}_{I \times I}+\beta\2(\mathbbm{1}_{I \times I^c}+\mathbbm{1}_{I^c \times I})+\gamma\2\mathbbm{1}_{I^c \times I^c}\,,
\end{equation}
where $\alpha,\beta,\gamma$ are positive constants, $I=[0,\delta]$ and $I^c=(\delta,1]$ for some $\delta \in (0,1/2]$. For example, in the special case $\beta=1$ and $\gamma=1/\alpha$ the choice $\delta=\delta_c(\alpha)$ with $\alpha>2$ leads to a density $\rho_S$ with a cusp singularity (cf. Fig. \ref{Fig:2by2Cusp}), where
\[
\delta_c(\alpha)\,:=\, \frac{(\alpha-2)^3}{9\1(\alpha^3-2\alpha^2+2\alpha-1)}\,.
\]
\begin{figure}[h]
	\centering
	\includegraphics[width=0.8\textwidth]{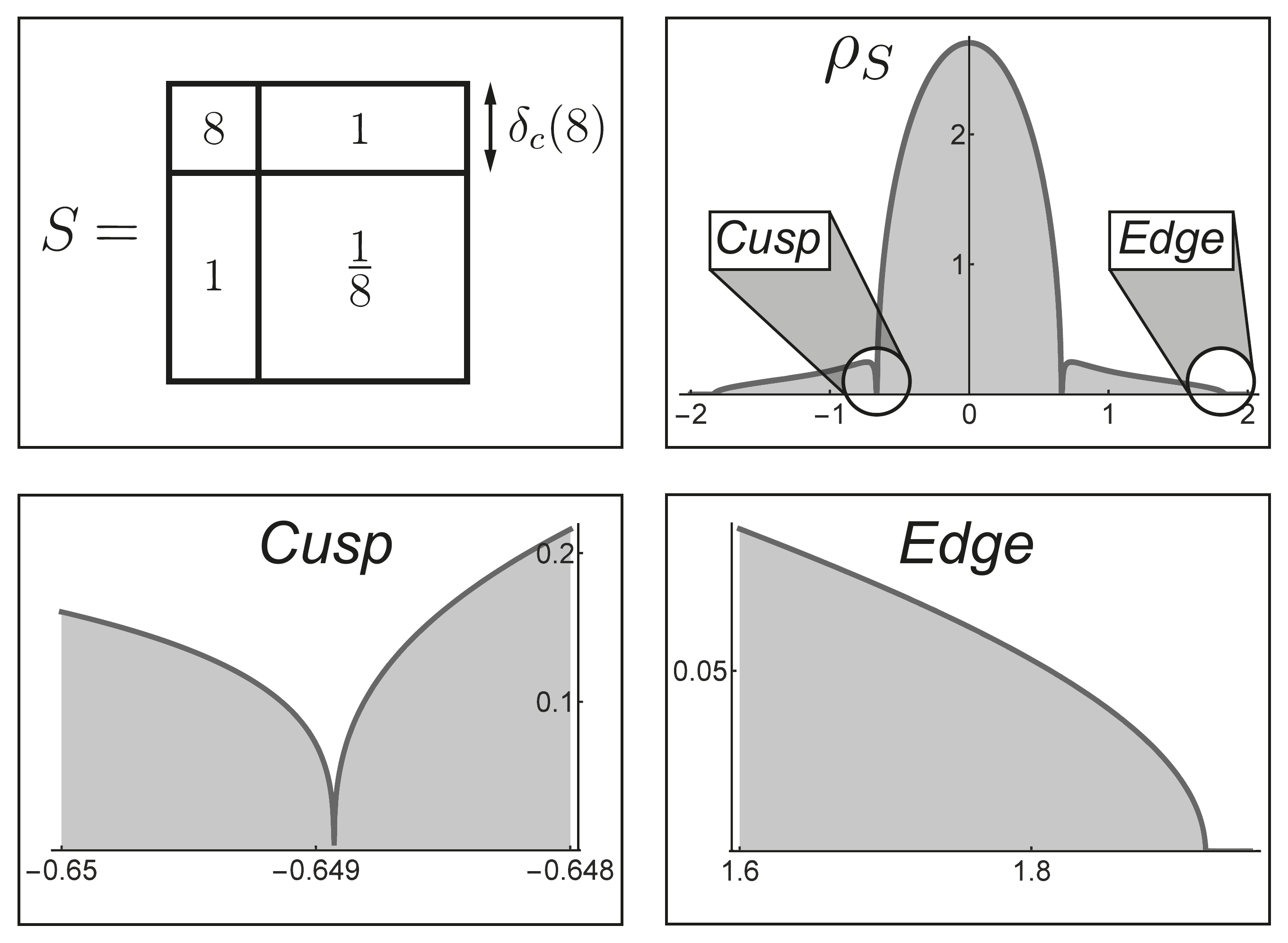}
	\caption{
	$2\times 2$ - block profile whose density has a cusp singularity. }
	\label{Fig:2by2Cusp}
\end{figure}

We remark that the solution $ m $ of the QVE corresponding to the $2\times 2$-profile \eqref{profile function} through $ s_{xy} = q(x,y) $ is of the form $ m(z) = \mu(z)\2 \mathbbm{1}_I + \nu(z)\2 \mathbbm{1}_{I^c}$, for some component functions $ \mu,\nu : \Cp \to \Cp $.
With this ansatz the original QVE reduces to a two dimensional system for the components $(\mu,\nu) $. 
Already in this simple case solving the QVE for general $\alpha,\beta,\gamma $ already requires solving a quartic polynomial. Nevertheless, Theorem \ref{thr:Regularity and singularities of the generating density} shows that a quartic singularity of $\rho_S$ never appears.

\subsection{Deformed Wigner matrices}

Another class of well studied self-adjoint random matrix models are the \emph{deformed Wigner matrices} of the form  
\[
H\,=\, A+\lambda V\,,
\]
where $A$ is a deterministic diagonal matrix and $V$ is a Wigner matrix, i.e., $V$ has centered i.i.d. entries, up to the symmetry constraints.
In the corresponding QVE \eqref{QVE}, 
%whose average generating measure approximates the density of states for large dimensions of $H$, 
we have $\xSpace=[0,1]$ with uniform measure, $s_{xy}\equiv \lambda$ and $a_x$ the smooth limiting profile of the diagonal entries of $A$. 
The average generating density $ \rho_{S,a} $ equals the asymptotic density of the eigenvalues as the dimension of $ H $ approaches infinity \cite{PasturDefWig}.
In particular, Theorem~\ref{thr:Regularity and singularities of the generating density} restricts the possible singularities of the limiting eigenvalue density to at most third order. 
 
The cubic root cusp has been observed in this context in \cite{BH} in the case when $V$ is a Gaussian unitarily invariant matrix (GUE) and $A$ has a spectrum symmetric to the origin with a gap around zero. As the coupling constant $\lambda>0$ increases from zero,  at a critical value the gap closes  in the  support of the density and a cubic singularity emerges. The argument in \cite{BH}, however,  did not use the QVE. Since the randomness is generated by a GUE matrix, all local correlation functions can be explicitly  computed via the Itzykson-Zuber formula. The cubic root cusp  can then be easily recovered from (3.22) in \cite{BH}. 
We remark that in this case the type of singularity also determines the local statistics.
Analogously to the Wigner-Dyson statistics in the bulk and the Tracy-Widom statistics at regular edges with a square root behavior, the cubic root singularity gives rise to a determinantal process described by the Pearcey kernel, see e.g. \cite{AM,BK,TW}.

\subsection{Translation invariant correlations}\label{sec:trinv}
Let $ \underline{\xi} = (\xi_{ij}:i,j\in \mathbb{Z}) $ be a family  of i.i.d. random variables indexed by $ \mathbb{Z}^2 $.
Let $ \theta_{pq} $, $ p,q \in \mathbb{Z} $, denote a shift of $ \underline{\xi} $, such that the $(i,j)$-th component of $ \theta_{p q}\underline{\xi} $ is $ \xi_{i+p,j+q} $. 
Given a measurable function  $ \Phi : \R^{\mathbb{Z}^2} \to \R $, such that 
\[
\mathbb{E}\,\Phi(\underline{\xi}) = 0\,,
\qquad
\mathbb{E}\,\Phi(\underline{\xi})^2 < \infty\,, 
\quad\text{and}\quad
\sum_{p,\1q \in \mathbb{Z}} \absb{\2\mathbb{E}\,\Phi(\theta_{pq}\underline{\xi})\1\Phi(\underline{\xi})\1} \,<\,\infty\,,
\]
we define a sequence of translation invariant random matrices 
 $H=H^{(N)}$
through 
\[
h_{ij} := \frac{\Phi(\theta_{ij}\underline{\xi})\1+\2\Phi(\theta_{ji}\underline{\xi})}{\!\sqrt{N\2}}
\,.
\]

Then their empirical spectral measures  converge weakly in probability to a non-random measure with a density $ \rho_S = \rho_{S\msp{-1},0}: \R \to [0,\infty) $. 
The limiting density $\rho_S$ is determined by the solution $ m $ of the QVE in the setup $ (\xSpace,\xMeasure(\dd x)) = ([0,1],\dd x)$, where   $ S $ has the integral kernel $ s : [\10,1\1]^2 \to [\10,\infty) $,
\[
s_{xy} \,:=\, 
4\sum_{p,\1q\in\mathbb{Z}}
\ee^{-\1\ii\12\1\pi\1(p\1x\2-\2q\1y)}
\,\mathbb{E}\,\Phi(\theta_{pq}\underline{\xi})\1\Phi(\underline{\xi}) 
\,,
\]
and $ a = 0 $.  
This convergence has been established in the Gaussian setup in \cite{AEK3,KhorunzhyPastur94,PasturShcerbinaAMSbook}.
In  \cite{BMP2013} it was extended by a comparison argument to the general setting we presented here.

\subsection{The color equation}\label{sec:color}

In \cite{AZdep} the authors show that the empirical distributions of the eigenvalues of a class of random matrices with dependent entries converge to a probability measure $ \mu $ on $ \R $ as the dimension of the matrices becomes large. The measure $ \mu $ is determined through the so-called \emph{color equations} (cf. equation (3.9) on p. 1135):
\[
\int_C \frac{s(c,c')P(\dd c')}{\lambda-\Psi(c',\lambda)}
\,=\,\Psi(c,\lambda)\,,
\qquad
\int_\R \frac{\mu(\dd \tau)}{\lambda-\tau} = \int_C\! \frac{P(\dd c)}{\lambda-\Psi(c,\lambda)}
\,,
\]
where $ \lambda \in \C $ and the color space is $ C= [0,1] \times \mathbb{S}^1 $, with $  \mathbb{S}^1 $  denoting the unit circle on the complex plane.
Identifying $ \xSpace = C $, $ x = c $,  $ y = c'$,  $ \xMeasure(\dd x) := P(\dd c) $, $ z = \lambda $, we see that the color equation is equivalent to the QVE \eqref{QVE}. Indeed, we have the correspondence 
\[
s_{xy} = s(c,c')\,,
\qquad
m_x(z) = \frac{\!-1\,}{\lambda-\Psi(c,\lambda)}
\,,
\]
so that from \eqref{m as stieltjes transform} we see that $ \rho_{S,0}(\dd \tau) = \mu(\dd \tau) $. 
Hence our results cover the asymptotic spectral statistics of this large class of random matrices with non-translation invariant correlations as well.

\section{Boundedness of the solution}

Recall the existence and uniqueness of the solution $ m $ to the QVE, as well as the Stieltjes transform representation (cf. Proposition \ref{prp:Existence and uniqueness}).
In this section we show that $ m $ is uniformly bounded, and that the imaginary part of $ m $ has mutually comparable components.
First we introduce a few notations and conventions that will be used throughout this paper.

\begin{notation}[Comparison relation]
For brevity we introduce the concept of \emph{comparison relations}: If $\varphi=\varphi(u) $ and  $ \psi=\psi(u) $  are non-negative functions on some set $ U $, then the notation $ \varphi \lesssim \psi $, or equivalently, $ \psi \gtrsim \varphi $, means that there exists a constant $ 0 < C < \infty $, depending only on the data input $s$ and $a$ of the QVE, such that $ \varphi(u) \leq C\1\psi(u) $ for all $ u \in U $. If $ \psi \lesssim \varphi \lesssim \psi $ then we write $ \varphi \sim \psi $, and say that $\varphi$ and $\psi$ are \emph{comparable}. Furthermore, we use $ \psi = \varphi + \ord(\xi) $ as a shorthand for $ \abs{\psi-\phi} \lesssim \xi $, where $\psi$ and $\varphi$ do not have to be non-negative.
\end{notation}

Many quantities in the following depend on the spectral parameter $z$, but for brevity we will often drop this dependence from our notation whenever the spectral parameter is considered fixed, e.g., we denote $ m = m(z) $.

Proposition \ref{prp:Existence and uniqueness} shows that the component $m_x$ of the solution to the QVE is determined by the component $v_x(\dd \tau)$ of the generating measure with support in $[-\kappa,\kappa]$. The Stieltjes transform representation \eqref{m as stieltjes transform} also implies that $v_x(\dd \tau)$ can be viewed as the weak limit $\lim_{\eta\downarrow 0}\frac{1}{\pi}\im m_x(\tau+\ii\1\eta)\dd\tau$. 
We may therefore restrict our analysis of $m$ to spectral parameters $z$ with $\abs{z}\leq 2\kappa$. 
The following Proposition is the main result of this section. 

\begin{proposition}[Uniform bound]
\label{prp:uniform bound}
Let $s$ and $a$ satisfy assumptions \textbf{(A)}, \textbf{(B)} and \textbf{(C)}.
Then the solution $m_x(z)$ of the QVE is uniformly bounded and bounded away from zero,
\begin{equation}
\label{uniform bound}
\abs{m(z)}\,\sim\, 1\,, \qquad \abs{z}\1\leq\1 2\kappa\,.
\end{equation} 
Moreover, the components of the imaginary part of $m$ are all comparable in size,
\begin{equation}
\label{comparable imaginary part}
\im m_x(z)\,\sim\,\im m_y(z)\,, \qquad  x,y \in \xSpace\,, \; \abs{z}\1\leq\1 2\kappa\,.
\end{equation}
\end{proposition}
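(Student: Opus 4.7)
Writing $f:=\im m\ge 0$ and separating the QVE into real and imaginary parts yields
\[
\frac{f_x}{\abs{m_x}^2}\,=\,\im z+(Sf)_x,\qquad -\frac{\re m_x}{\abs{m_x}^2}\,=\,\re z+a_x+(S\re m)_x.
\]
The lower bound in \eqref{uniform bound} is automatic once the upper bound is in hand: the QVE gives $\abs{m_x}^{-1}\le\abs{z}+\norm{a}+\norm{S}\norm{m}$, so $\norm{m}\lesssim 1$ forces $\abs{m_x}\gtrsim 1$ uniformly on $\{\abs{z}\le 2\kappa\}$. Similarly, the component comparability \eqref{comparable imaginary part} will be a by-product of the iteration producing the upper bound. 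The core task is therefore a uniform upper bound $\norm{m(z)}\lesssim 1$ on $\{\abs{z}\le 2\kappa\}$.

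\textbf{Averaged control from (A) and pointwise iteration from (B).} Multiplying the imaginary-part identity above by $f_x$ and averaging against $\pi$ produces
\[
\avg{f^2/\abs{m}^2}\,=\,\im z\,\avg{f}+\avg{f\,Sf}.
\]
Since $f\le\abs{m}$ the left side is $\le 1$ and both terms on the right are non-negative, so $\avg{f\,Sf}\le 1$. Assumption (A), $S\ge R^2$ with $R$ symmetric and $\inf_x(R\mathbf{1})_x\ge c_0>0$, then yields $\avg{(Rf)^2}=\avg{f\,R^2 f}\le 1$, and Cauchy-Schwarz combined with $R\mathbf 1\ge c_0$ gives $c_0\avg{f}\le\avg{Rf}\le\avg{(Rf)^2}^{1/2}\le 1$, so $\avg{f}\lesssim 1$, and in turn $(Sf)_x\le(\sup_{u,v}s_{uv})\avg{f}\lesssim 1$ pointwise. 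On the other hand, from $f\ge\abs{m}^2 Sf$ iterated exactly $K$ times and applying assumption (B),
\[
f_x\,\ge\,(\inf_y\abs{m_y})^{2K}\,(S^Kf)_x\,\ge\,(\inf_y\abs{m_y})^{2K}(\inf_{u,v}s^{(K)}_{uv})\avg{f},
\]
which, combined with the matching upper bound $f_x\le\norm{m}^2(\im z+(\sup s_{uv})\avg{f})\lesssim\norm{m}^2$, produces the comparability \eqref{comparable imaginary part} as soon as $\abs{m}\sim 1$ is in place.

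\textbf{Uniform upper bound on $\abs{m}$ — the main obstacle.} For $\im z$ bounded below by a fixed positive constant, the standard Stieltjes estimate $\abs{m}\le 1/\im z$ suffices, and for $z$ with $\re z$ far outside the generating support $[-\kappa,\kappa]$, the bound $\abs{m_x}\le 1/\mathrm{dist}(z,[-\kappa,\kappa])$ is enough. The genuine difficulty is $\im z\downarrow 0$ with $\re z$ near the support. Taking the imaginary part of $-1/m_x=z+a_x+(Sm)_x$ gives $\abs{m_x}^{-1}\ge\im z+(Sf)_x$, which combined with the lower bound on $(Sf)_x$ from the previous step yields the schematic estimate
\[
\abs{m_x}\,\lesssim\,\frac{1}{(\inf_y\abs{m_y})^{2(K-1)}(\inf_{u,v}s^{(K)}_{uv})\,\avg{f}}\,,
\]
coupled to $(\inf\abs{m})^{-1}\lesssim 1+\norm{m}$ from the QVE itself. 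The main obstacle is to close this circular dependence $\norm{m}\leftrightarrow\inf\abs{m}\leftrightarrow\avg{f}$ and extract an \emph{unconditional} bound on $\norm{m}$. My plan is to argue by continuation in $z$: for $\im z$ sufficiently large, $\norm{m}\le 1/\im z$ provides slack; as $\im z$ descends toward the real axis, the continuity of $z\mapsto m(z)$ granted by Proposition~\ref{prp:Existence and uniqueness}, together with the self-consistent inequalities above (supplemented near the outer spectral edges by the real-part equation, which provides the analogous bound $\abs{m_x}\le 1/\absb{\re z+a_x+(S\re m)_x}$ precisely when $\avg{f}$ is too small to be of use), excludes $\norm{m(z)}$ from ever crossing a pre-chosen threshold determined by the constants in (A) and (B). Both (A) — supplying the averaged $L^2$-control that keeps $\avg{f}$ bounded — and (B) — spreading pointwise information across $\xSpace$ through iterates of $S$ — enter in essential and complementary ways at this step.
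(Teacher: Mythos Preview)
Your plan has a genuine gap at the step you yourself flag as ``the main obstacle''. The chain of inequalities you assemble,
\[
\norm{m}\,\lesssim\,\frac{1}{(\inf_y\abs{m_y})^{2K}\,\avg{\im m}}\,,
\qquad
\inf_y\abs{m_y}\,\gtrsim\,\frac{1}{1+\norm{m}}\,,
\]
combines to $\norm{m}\lesssim (1+\norm{m})^{2K}/\avg{\im m}$. Since $2K>1$, this polynomial inequality in $\norm{m}$ does \emph{not} bound $\norm{m}$, even when $\avg{\im m}$ is bounded below; and when $\avg{\im m}\to 0$ (which genuinely happens as $\im z\downarrow 0$ with $\re z$ outside the support $\mathfrak{S}$ but still inside $\{\abs{z}\le 2\kappa\}$) it gives nothing at all. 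Your proposed ``continuation in $z$'' does not repair this: continuity of $z\mapsto\norm{m(z)}$ together with an inequality that permits arbitrarily large values of $\norm{m}$ cannot exclude blow-up. The vague appeal to the real-part equation ``near the outer spectral edges'' is not a mechanism either --- the regime $\re z\in[-\kappa,\kappa]\setminus\mathfrak{S}$ is not an outer edge and the real part gives no obvious pointwise lower bound on $\abs{1/m_x}$ there.

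More tellingly, you never use assumption \textbf{(C)}. The paper's proof hinges on it. The paper introduces the operator $F(z)w:=\abs{m}S(\abs{m}w)$ and proves $\norm{F(z)}_2\le 1$ from the imaginary-part identity (this is where your correct observation $\avg{f\1Sf}\le 1$ with $f=\im m$ is replaced by the stronger $\avg{\abs{m}\1S\abs{m}}\le\norm{F}_2\le 1$, controlling $\abs{m}$ rather than $\im m$). From $\norm{F}_2\le 1$ and \textbf{(A)} one gets $\avg{\abs{m}}\lesssim 1$; the QVE then gives the lower bound $\abs{m}\gtrsim 1$; feeding this back into $\norm{F}_2\le 1$ yields $\norm{m}_2\lesssim 1$. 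The final step --- upgrading the $L^2$ bound to the uniform bound $\norm{m}\lesssim 1$ --- is a separate lemma whose input is precisely assumption \textbf{(C)}: one compares $1/\abs{m_y}$ and $1/\abs{m_x}$ via the QVE to get $\abs{m_y}^{-1}\le\abs{m_x}^{-1}+\abs{a_x-a_y}+\norm{S_x-S_y}_2\norm{m}_2$, integrates in $y$, and inverts using \textbf{(C)}. Your proposal neither finds an alternative route to $L^\infty$ control nor invokes \textbf{(C)}; without one of these the uniform upper bound cannot be closed. Your treatment of the lower bound and of the comparability \eqref{comparable imaginary part} (granted $\abs{m}\sim 1$) is fine and matches the paper.
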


In the following definition we introduce a  $z$-dependent operator $F(z)$ that depends on the value $m(z)$ of the solution at $z$. This operator will play a fundamental role in the upcoming analysis and in the proof of Proposition~\ref{prp:uniform bound} in particular. 

\begin{definition}[Operator $F$]
\label{def:operator F}
Let $m$ be the solution of the QVE and $z \in \Cp$. Then we define the operator $F(z)$ on $\Bounded(\xSpace,\C)$ by 
\begin{equation}
\label{definition of F}
(F(z)\1w)_x \,:=\, |\1m_x(z)| \int s_{x y} \1|\1m_y(z)|\1 w_y\1\xMeasure(\dd y)\,, \qquad w \in \Bounded(\xSpace,\C)\,.
\end{equation}
\end{definition}

We denote by $\norm{\2\cdot\2}_2$ the standard norm on $L^2(\xSpace)=L^2(\xSpace,\pi)$ and by $\norm{T}_2$ the induced operator norm of some operator $T$ on $L^2(\xSpace)$. We will now see that the diagonal rescaling by $\abs{m}$ in the definition of $F$ implies that the spectral radius of this operator will always be bounded by $1$. 

\begin{lemma}[Spectral radius of $F$]
\label{lmm:Spectral radius of F}
Let $z \in \Cp$ and the operator $F(z)$ be as in \eqref{definition of F}. Then the spectral radius $\norm{F(z)}_2$ is an eigenvalue of $F(z)$ and there exists at least one corresponding non-negative eigenfunction
$f(z) \in \Bounded(\xSpace,\R_0^+)$.  Any such eigenfunction satisfies the relation 
\begin{equation}
\label{relation between rho and alpha}
\norm{F(z)}_2\,=\, 1-\frac{\avg{f(z)\1|m(z)|}\im z}{\avg{f(z)|m(z)|^{-1}\im m(z)}}\,.
\end{equation}
\end{lemma}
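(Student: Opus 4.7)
The plan is to recognize $F=F(z)$ as a compact, self-adjoint operator on $L^2(\xSpace,\pi)$ with a non-negative symmetric integral kernel, apply a Perron--Frobenius type argument to produce a non-negative eigenfunction at the spectral radius, and then derive the identity \eqref{relation between rho and alpha} by pairing this eigenfunction against the imaginary part of the QVE.

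First I would verify the analytic set-up. The integral kernel $(x,y)\mapsto\abs{m_x}s_{xy}\abs{m_y}$ of $F$ is symmetric in $x,y$ (because $s_{xy}=s_{yx}$) and bounded on the probability space $(\xSpace,\pi)$, since $s\in\Bounded(\xSpace^2,\R_0^+)$ and $\abs{m_x(z)}\leq(\im z)^{-1}$ from the Stieltjes transform representation \eqref{m as stieltjes transform}. Hence $F$ extends to a self-adjoint Hilbert--Schmidt (in particular, compact) operator on $L^2(\xSpace,\pi)$.

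Second I would extract the non-negative eigenfunction variationally. For every $w\in L^2(\xSpace,\pi)$ with $\norm{w}_2=1$, non-negativity of the kernel gives $\abs{\scalar{w}{Fw}}\leq\scalar{\abs{w}}{F\abs{w}}$. Combined with the self-adjoint Rayleigh identity $\norm{F}_2=\sup_{\norm{w}_2=1}\abs{\scalar{w}{Fw}}$, this forces
\begin{equation*}
\norm{F}_2\,=\,\sup_{f\geq 0,\,\norm{f}_2=1}\scalar{f}{Ff}\,,
\end{equation*}
and compactness ensures the supremum is attained at some $f\geq 0$, which is automatically in the $\norm{F}_2$-eigenspace (since $\scalar{w}{Fw}=\lambda_{\max}\norm{w}^2$ forces $w$ into this eigenspace). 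To upgrade $f$ from $L^2$ to $\Bounded(\xSpace,\R_0^+)$, in the nontrivial case $\norm{F}_2>0$ the relation $\norm{F}_2 f_x=\abs{m_x}\int s_{xy}\abs{m_y}f_y\xMeasure(\dd y)$ writes $f$ as a uniformly bounded integral of the $L^1$-function $f$ (using $L^2\subseteq L^1$ on a probability space and boundedness of the other factors); the degenerate case $\norm{F}_2=0$ forces $F=0$, so any non-negative bounded function qualifies.

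Third I would derive the identity by taking the imaginary part of \eqref{QVE}. Since $a$ is real,
\begin{equation*}
\frac{\im m_x}{\abs{m_x}^2}\,=\,\im z+(S\2\im m)_x\,.
\end{equation*}
Multiplying by $\abs{m_x}$ and introducing the rescaled unknown $u:=\abs{m}^{-1}\im m$, this becomes the linear equation $(I-F)u=\abs{m}\1\im z$. Pairing with the eigenfunction $f$ and using self-adjointness, $\scalar{f}{Fu}=\scalar{Ff}{u}=\norm{F}_2\scalar{f}{u}$, yields
\begin{equation*}
\pb{1-\norm{F}_2}\avg{\1f\2\abs{m}^{-1}\im m\1}\,=\,\im z\,\avg{\1f\1\abs{m}\1}\,.
\end{equation*}
Since $\im m_x>0$ throughout $\Cp$ (by \eqref{m as stieltjes transform}) and $f\geq 0$ is nontrivial, the left-hand factor $\avg{f\abs{m}^{-1}\im m}$ is strictly positive, so division produces exactly \eqref{relation between rho and alpha}.

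The main obstacle I anticipate is the Perron--Frobenius step and the subsequent bootstrapping: producing a non-negative eigenvector at the spectral radius and promoting it from $L^2$ to $\Bounded(\xSpace,\R_0^+)$. Once the operator $F$ is recognized as compact self-adjoint with non-negative kernel, and the correct rescaling $u=\abs{m}^{-1}\im m$ is identified, the derivation of the identity is a direct algebraic manipulation of the imaginary part of the QVE combined with the self-adjoint pairing.
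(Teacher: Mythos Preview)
Your proof is correct and follows the same approach as the paper. Both establish compactness of $F$ on $L^2(\xSpace,\pi)$ via the bounded kernel, extract a non-negative eigenfunction at the spectral radius (the paper cites the Krein--Rutman theorem, you argue variationally via $\abs{\scalar{w}{Fw}}\leq\scalar{\abs{w}}{F\abs{w}}$), bootstrap it to $\Bounded(\xSpace,\R_0^+)$ using the eigenvalue equation, and then pair the rescaled imaginary-part equation $(1-F)(\abs{m}^{-1}\im m)=\abs{m}\,\im z$ against $f$ to obtain the identity.
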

\begin{proof}
Since the kernel $s$ is bounded and the solution of the QVE satisfies the trivial bound \eqref{m trivial bound}, the kernel of the integral operator $F$ is bounded as well. This implies that $F$ is compact when it is considered as an operator on $L^{p}(\xSpace)$ with $p \in (1,\infty)$. By the Krein-Rutman theorem the spectral radius $\norm{F}_2$ is an eigenvalue of $F$ with a corresponding non-negative eigenfunction $f \in L^{p}(\xSpace)$. From the eigenvalue equation, $Ff=\norm{F}_2\1f$, and the boundedness of the kernel of $F$ we read off that $f$ is bounded, i.e., up to modification on a set of measure zero it is an eigenfunction of $F$ as an operator on $\Bounded(\xSpace, \C)$. 

We will now show \eqref{relation between rho and alpha}. We take the imaginary part on both sides of \eqref{QVE} and multiply with $\abs{m}$:
\begin{equation}
\label{imaginary part of QVE}
|m|^{-1}\im m\,=\, |m|\1\im z + F ( |m|^{-1}\im m)\,.
\end{equation}
Let $f \in \Bounded(\xSpace, \R_0^+)$ be an arbitrary eigenfunction of $F$ corresponding to the eigenvalue $\norm{F}_2$. We multiply both sides of \eqref{imaginary part of QVE} with $f$ and take the average. Since the kernel of $F$ is symmetric we end up with
\[
\avg{\1f\1|m|^{-1}\im m\1}\,=\, \avg{\1f\1|m|\1}\1\im z \,+\2 \norm{F}_2\1 \avg{\1f\1 |m|^{-1}\im m\1}\,.
\]
This is equivalent to \eqref{relation between rho and alpha} and Lemma \ref{lmm:Spectral radius of F} is proven. 
\end{proof}

\begin{proof}[Proof of Proposition \ref{prp:uniform bound}] 
First we show the uniform upper bound in \eqref{uniform bound}. The uniform lower bound will be shown along the way as well. We fix $z \in \Cp$ with $\abs{z}\leq 2\1\kappa$.  We start by establishing boundedness in $L^2(\xSpace)$, i.e.,
\begin{equation}
\label{L2 bound}
\norm{m}_2\,\lesssim\, 1\,.
\end{equation}
Once we have shown \eqref{L2 bound}, the uniform bound follows from the following lemma, whose proof is postponed until the end of this section.
\begin{lemma}
\label{lmm:L2 bound implies uniform bound}
Let $ z \in \Cp $, $ \Phi \in \R^+ $ and $\Gamma_{\!\Phi}: \R^+ \to \R^+$ be the strictly monotonically increasing function defined by
\begin{equation}
\Gamma_{\!\Phi}(\tau) := \inf_{x \1\in\1 \xSpace}\int_\xSpace\!
\frac{\,\xMeasure(\dd y)}{(\2\tau^{-1}\!+|a_x-a_y|+\norm{S_x-S_y}_2\2\Phi\2)^{\12}\!} 
\;.
\end{equation}
If $ \norm{m(z)}_2 \leq \Phi $ and $ \lim_{\tau \to \infty}\Gamma_\Phi(\1\tau\1)>\Phi^2$, then 
\begin{equation}
\label{uniform bound on m}
\norm{\1m(z)} 
\;\leq\;
(\1\Gamma_{\Phi})^{-1}\msp{-1}(\Phi^2)
\,.
\end{equation}
\end{lemma}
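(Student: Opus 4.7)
The plan is to exploit the pointwise form of the QVE to relate $1/m_y$ to $1/m_x$ for arbitrary $x,y$, and then to leverage the given $L^2$-bound $\norm{m(z)}_2\leq \Phi$ via Cauchy--Schwarz to control the error in this relation by exactly the quantity appearing in the denominator of $\Gamma_\Phi$.

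First I would subtract the QVE at $y$ from the QVE at $x$ to obtain
\[
\frac{1}{m_y}-\frac{1}{m_x}
\;=\;(a_x-a_y)+\bigl((Sm)_x-(Sm)_y\bigr)
\;=\;(a_x-a_y)+\int (s_{xu}-s_{yu})\1m_u\1\xMeasure(\dd u).
\]
Applying Cauchy--Schwarz to the last integral and using $\norm{m}_2\leq\Phi$ yields $\abs{(Sm)_x-(Sm)_y}\leq \norm{S_x-S_y}_2\1\Phi$. Combining with the triangle inequality gives
\[
\abs{m_y}^{-1}\;\leq\; \abs{m_x}^{-1}+\abs{a_x-a_y}+\norm{S_x-S_y}_2\1\Phi,
\]
which after inversion becomes a pointwise lower bound on $\abs{m_y}$.

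Next I would integrate the squared lower bound in $y$ against $\xMeasure$ and use the hypothesis once more:
\[
\Phi^2\;\geq\;\norm{m}_2^{\12}
\;\geq\;
\int \frac{\xMeasure(\dd y)}{\bigl(\abs{m_x}^{-1}+\abs{a_x-a_y}+\norm{S_x-S_y}_2\1\Phi\bigr)^{\12}}
\;\geq\;\Gamma_\Phi\pb{\abs{m_x}}\1,
\]
where the final inequality uses the definition of $\Gamma_\Phi$ as the infimum over $x$. Since this holds for every $x\in\xSpace$, and since $\Gamma_\Phi$ is strictly monotonically increasing (an elementary consequence of the form of its integrand), I may take the supremum over $x$ to conclude
\[
\Phi^2\;\geq\;\Gamma_\Phi\pb{\norm{m}}.
\]

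The assumption $\lim_{\tau\to\infty}\Gamma_\Phi(\tau)>\Phi^2$ combined with continuity of $\Gamma_\Phi$ (via dominated convergence in its defining integral) ensures that $\Gamma_\Phi$ is a homeomorphism from $(0,\infty)$ onto $(0,\lim_{\tau\to\infty}\Gamma_\Phi(\tau))$, so the preimage $(\Gamma_\Phi)^{-1}(\Phi^2)$ is a well-defined finite number; applying $(\Gamma_\Phi)^{-1}$ to both sides of the displayed inequality yields $\norm{m}\leq (\Gamma_\Phi)^{-1}(\Phi^2)$, as claimed. The only potential subtlety is ensuring that the sup-over-$x$ step is legitimate, but since the bound $\Phi^2\geq\Gamma_\Phi(\abs{m_x})$ holds for every $x$ and $\Gamma_\Phi$ is monotone, passing to $\norm{m}=\sup_x\abs{m_x}$ is automatic; everything else is a routine chain of elementary estimates.
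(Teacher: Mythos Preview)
Your proof is correct and follows essentially the same approach as the paper: subtract the QVE at two points to bound $\abs{m_y}^{-1}$ in terms of $\abs{m_x}^{-1}$ plus the data-dependent error $\abs{a_x-a_y}+\norm{S_x-S_y}_2\Phi$ via Cauchy--Schwarz, then square, integrate in $y$, and invert $\Gamma_\Phi$. The paper's version is slightly terser (it does not spell out the Cauchy--Schwarz step or the invertibility argument), but the logic is identical.
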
 
In fact, in our setting we have $\Gamma_{\!\Phi}(\R^+)=\R^+$ for all $\Phi$ because of the assumption \textbf{(C)} and therefore Lemma \ref{lmm:L2 bound implies uniform bound} always provides a uniform upper bound, given that the solution is bounded in $L^2(\xSpace)$.

The proof of \eqref{L2 bound} starts by establishing a bound in $L^1(\xSpace)$ first. For this we use that the quadratic form corresponding to $F$, evaluated on the constant function $1$, is bounded by $\norm{F}_2$. This spectral norm in turn  can be estimated by $\norm{F}_2\leq 1$, which is a consequence of  \eqref{relation between rho and alpha}. We therefore find the chain of inequalities,
\[
1 
\,\geq\,
\norm{F}_2
\,\geq\, 
\avg{\1\abs{m}\2S\1\abs{m}\1}
\,\geq\, 
\avg{(R\1|m|)^2}
\,\geq\, 
\avg{\1R\1|m|\1}^2
\,\geq\, 
\avg{|m|}^2 
\inf_{x \in \xSpace} (R1)^2_x
.
\]
In the third inequality we employed \textbf{(A)}, where $R$ has $r_{x y}$ as its kernel, and in the second to last step Jensen's inequality was used.  Thus, $\avg{|m|}\lesssim 1$. 

This $L^1(\xSpace)$-bound is now used to infer that $m$ is bounded away from zero. Indeed, taking absolute value on both sides of the QVE implies 
\[
\frac{1}{|m|}\,\leq\,|z| + \norm{a} + \avg{\1|m|\1}\!\sup_{x,y\1\in\1 \xSpace}s_{xy}
\,.
\]
In particular, we have $\abs{m}\gtrsim 1$ because $\abs{z}\leq 2\1\kappa$, which proves the uniform lower bound in \eqref{uniform bound}. 

From this lower bound on the absolute value of the solution, \eqref{L2 bound} follows by using that the $L^2(\xSpace)$-norm of $F$ applied to the constant function $1$ is bounded by the spectral radius of $F$,
\begin{equation}
\label{L2 bound from lower bound}
1
\,\geq\,
\norm{F}_2^2
\,\geq\,
\avg{(\abs{m} S \abs{m})^2}
\,\geq\, 
\avg{\1|m|^2}
\inf_{x \in \xSpace}(S|m|)_x^2
\,.
\end{equation}
The lower bound on $\abs{m}$ indeed yields
\[
\inf_{x \1\in\1 \xSpace}(S|m|)_x\,\geq\, \inf_{x \1\in\1 \xSpace}(R^2|m|)_x\,\gtrsim\inf_{x \1\in\1 \xSpace}\pB{\int r_{xy}\1 \xMeasure(\dd y)}^2 >\20
\,,
\]
where we used assumption \textbf{(A)} in the first and last inequality.
Thus, \eqref{L2 bound from lower bound} implies \eqref{L2 bound} and hence the uniform bound  \eqref{uniform bound}.

Now we show \eqref{comparable imaginary part}.  In fact, we will see that the components of $\im m$ are comparable to their average, $\im m_x\sim \avg{\1\im m\1}$. We take the imaginary part on both sides of \eqref{QVE} and use \eqref{uniform bound} to see that
\begin{equation}
\label{imaginary part of QVE as inequality}
\im m\,\sim\, \im z + S \im m\,.
\end{equation}
Since $\im z\geq 0$, we have $\im m\gtrsim S\im m$. We iterate this inequality $K$ times and find 
\[
\im m\,\gtrsim \, S^K\im m\,\gtrsim\, \avg{\1\im m\1}\,,
\]
using assumption \textbf{(B)}.

On the other hand, since $\im m \geq 0$, by averaging on both sides of \eqref{imaginary part of QVE as inequality} we get $\avg{\im m}\gtrsim \im z$. Therefore, \eqref{imaginary part of QVE as inequality} also implies
\[
\im m \,\lesssim\, \avg{\1\im m\1} + S \im m\,\leq\,  \avg{\1\im m\1} + \avg{\1\im m\1} \sup_{x,y\in \xSpace}s_{xy} \,.
\]
This finishes the proof of Proposition \ref{prp:uniform bound}.
\end{proof}
 
\begin{proof}[Proof of Lemma \ref{lmm:L2 bound implies uniform bound}]
By adding and subtracting $ m_x^{-1} $ and using the QVE we obtain
\[
\frac{1}{\abs{m_y}}
\,=\,
\absbb{\frac{1}{m_x\!} +a_x-a_y+\avgb{S_x-S_y,m}}
\,\leq\,
\absB{\frac{1}{m_x\!}} + |a_x-a_y| +\norm{S_x-S_y}_2\norm{m}_2
\,,
\]
for any $ y \in \xSpace $. Applying $ t \mapsto t^{-2} $ on both sides, integrating over $ y $, and using $ \norm{m}_2 \leq \Phi $, we find   
\[
\Phi^2 
\ge\! \int_\xSpace \absB{\frac{1}{\2m_y}\!}^{-2}\msp{-7}\xMeasure(\dd y)
\ge \!\int_\xSpace \Big(\frac{1}{\abs{\1m_x}\!} + |a_x-a_y| + \norm{S_x-S_y}_2\2\Phi\Big)^{\!-2}\msp{-5}
\xMeasure(\dd y)
\,\ge\,
\Gamma_{\Phi}\big(\abs{m_x}\big)
\,,
\]
for any $ x \in \xSpace $.
This is equivalent to \eqref{uniform bound on m} since the strictly monotonous function  $\Gamma_{\!\Phi} $ is invertible on $\Phi^2$ by assumption.
\end{proof}

\section{Regularity of the solution}
\label{sec:Regularity of the solution}

From here on we will always assume that $s$ and $a$ satisfy assumptions \textbf{(A)}, \textbf{(B)} and \textbf{(C)}. 
In this section we will analyze the regularity of $\im m(z)$ as a function of $z$. Since the generating density is the limit of $\frac{1}{\pi}\im m(\tau+\ii \eta)$ as $\eta\downarrow 0$, this regularity will be inherited by $v$, provided it is established uniformly in $\eta$. In this spirit, we prove the following proposition.

\begin{proposition}[Regularity of the generating density]
\label{prp:regularity}
The generating density $v(\tau) \in \Bounded(\xSpace,\R_0^+)$ exists and is uniformly $1/3$-H\"older continuous in $\tau$,
\[
\sup_{\tau_1 \neq \tau_2}\frac{\norm{\1v(\tau_1)-v(\tau_2)}}{|\tau_1-\tau_2|^{1/3}}\,<\, \infty\,.
\]
Moreover, $v(\tau)$ is real analytic for $\tau \in \R\backslash \partial \mathfrak{S} $ .
\end{proposition}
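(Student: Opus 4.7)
The plan is to prove the proposition by establishing uniform $1/3$-H\"older continuity of $m(z)$ on $\overline{\Cp}$, from which the existence of the density $v_x(\tau) = \pi^{-1}\lim_{\eta\downarrow 0}\im m_x(\tau+\ii\1\eta)$, its boundedness (via Proposition \ref{prp:uniform bound}), and its uniform $1/3$-H\"older regularity all follow by Stieltjes inversion. Analyticity of $v$ on $\R\setminus\partial\mathfrak{S}$ will then follow from a local analytic extension of $m$ across those parts of the real axis via the analytic implicit function theorem.

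First I would differentiate the QVE \eqref{QVE} in $z$ to obtain $(m^{-2}-S)\2\partial_z m = 1$. Multiplying by $\abs{m}$ and factoring out the diagonal unimodular operator $U := \ee^{-2\1\ii\1\arg m}$ converts this into $(U-F(z))\1w = g$, where $F(z)$ is the symmetric positivity preserving operator of Definition \ref{def:operator F}, $\norm{g}\lesssim 1$ thanks to $\abs{m}\sim 1$ from Proposition \ref{prp:uniform bound}, and $w$ encodes $\partial_z m$ up to a bounded rescaling. Combining Lemma \ref{lmm:Spectral radius of F} with the comparability \eqref{comparable imaginary part} of the components of $\im m$ gives the quantitative spectral gap
\[
1 - \norm{F(z)}_2 \,\sim\, \frac{\im z}{\avg{\1\im m(z)\1}}\,.
\]
A perturbation argument exploiting that $F$ is symmetric and non-negative while $U$ is unimodular upgrades this to $\norm{(U-F)^{-1}}_2 \lesssim \avg{\im m}/\im z$, yielding the a priori derivative estimate
\[
\norm{\partial_z m(z)}\,\lesssim\, \frac{\avg{\1\im m(z)\1}}{\im z}\,.
\]

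Integrating this bound along vertical segments reduces the H\"older estimate to controlling the scaling of $g(\eta) := \avg{\im m(\tau+\ii\1\eta)}$ as $\eta\downarrow 0$, uniformly in $\tau$. The crude bound $g \lesssim 1$ from Proposition \ref{prp:uniform bound} only produces logarithmic continuity, so I would run a self-improving bootstrap: substituting the current scaling $g(\eta)\lesssim \eta^{\alpha}$ into the derivative estimate shows $\norm{m(\tau+\ii\1\eta)-m(\tau)}\lesssim \eta^{\alpha}$, which via the QVE forces a better scaling on $g$, and iterating converges to the fixed point $\alpha = 1/3$ dictated by the cubic nature of cusp singularities. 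Once $g(\eta)\lesssim \eta^{1/3}$ uniformly, the derivative bound becomes $\norm{\partial_z m(\tau+\ii\1\eta)}\lesssim \eta^{-2/3}$, which integrates to the $1/3$-H\"older estimate in $\im z$; the Cauchy--Riemann equations and holomorphy of $m$ in $\Cp$ then extend this to joint $1/3$-H\"older continuity in $z\in\overline{\Cp}$, and hence to $v$.

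For analyticity at $\tau_0 \in \R \setminus \partial\mathfrak{S}$, I would apply the analytic implicit function theorem to the map $\Phi(m,z) := m^{-1}+z+a+Sm$. If $\tau_0$ lies in the interior of $\mathfrak{S}$, then $\im m(\tau_0) > 0$ componentwise, so $\norm{F(\tau_0)}_2<1$ and the Fr\'echet derivative $D_m\Phi(m(\tau_0),\tau_0) = -m(\tau_0)^{-2}+S$ is invertible on $\Bounded(\xSpace,\C)$, giving a local real analytic extension of $m$ to a complex neighborhood of $\tau_0$. If $\tau_0 \not\in\overline{\mathfrak{S}}$, then $m(\tau_0)\in\R$ and $m$ extends across $\tau_0$ by Schwarz reflection. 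The main obstacle is the bootstrap: the bare derivative bound alone is not strong enough, and one must exploit the cubic self-improvement induced by the QVE to reach the sharp exponent $1/3$. A secondary subtlety is the inversion $\norm{(U-F)^{-1}}_2 \lesssim (1-\norm{F}_2)^{-1}$, which requires more than the spectral gap alone because $U$ twists away from the Perron direction of $F$ and one must separately control the complementary subspace.
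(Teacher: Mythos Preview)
Your derivative bound $\norm{(U-F)^{-1}}_2 \lesssim (1-\norm{F}_2)^{-1} \sim \avg{\im m}/\im z$ is correct but discards precisely the information that makes the argument work. The paper proves instead (Lemma~\ref{lmm:Bounds on B-inverse}) the sharper estimate
\[
\norm{(1-m^2S)^{-1}} \,\lesssim\, \avg{\im m}^{-2}\,,
\]
by controlling $\norm{(U-F)^{-1}}_2$ through $\bigl(\mathrm{Gap}(F)\,\abs{1-\norm{F}_2\avg{f,Uf}}\bigr)^{-1}$ (Lemma~\ref{lmm:Norm of B^-1-type operators on L2}) rather than through $(1-\norm{F}_2)^{-1}$. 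The point is that $\re\bigl(1-\avg{f,Uf}\bigr)=2\avg{f^2\abs{m}^{-2}(\im m)^2}\gtrsim\avg{\im m}^2$, which stays positive on the real axis wherever $\im m>0$, whereas your quantity $1-\norm{F}_2$ vanishes there. With the sharper bound one gets the differential inequality $\abs{\partial_z\im m}\lesssim(\im m)^{-2}$ directly, so $(\im m)^3$ is Lipschitz and the $1/3$-H\"older continuity follows with no bootstrap whatsoever.

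Your bootstrap does not self-improve as written. From $g(\eta)\lesssim\eta^\alpha$ your bound gives $\norm{\partial_z m}\lesssim\eta^{\alpha-1}$, which integrates to $\norm{m(\tau+\ii\eta)-m(\tau)}\lesssim\eta^\alpha$; at a point with $\im m(\tau)=0$ this merely returns $g(\eta)\lesssim\eta^\alpha$, the same exponent. In the bulk the hypothesis $g(\eta)\lesssim\eta^\alpha$ fails for every $\alpha>0$ since $g(\eta)\to\pi\avg{v(\tau)}>0$, and your bound degenerates to $\norm{\partial_z m}\lesssim\eta^{-1}$, which does not integrate. Any genuine cubic self-improvement would have to come from the expansion in Proposition~\ref{prp:Cubic equation}, but that expansion already \emph{uses} the H\"older continuity you are trying to establish (cf.\ \eqref{a priori Holder estimate}).

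There is also an error in your analyticity argument: for $\tau_0\in\mathfrak{S}$ one has $\norm{F(\tau_0)}_2=1$, not $<1$, since the right-hand side of \eqref{relation between rho and alpha} equals $1$ once $\im z=0$ while $\im m>0$ (cf.\ \eqref{spectral radius 1 at singularity}). The operator $1-m(\tau_0)^2S$ is nevertheless invertible, but for the reason above---the phase $U$ rotates the Perron eigenvector away from the kernel of $1-F$---not because of a strict norm bound on $F$.
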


From the comparability \eqref{comparable imaginary part} of the components of $\im m$ we infer that the components $v_x$ of the generating density are comparable as well. In particular, the set $\mathfrak{S}$ as defined in \eqref{definition of set S} does not depend on $x$

\begin{proof}[Proof of Proposition \ref{prp:regularity}]
In order to show that the generating density exists and is $1/3$-H\"older continuous, we will prove that 
\begin{equation}
\label{Holder continuity of im m}
\sup_{z_1 \neq z_2}\frac{\norm{\1\im m(z_1)-\im m(z_2)}}{|z_1-z_2|^{1/3}}\,< \, \infty\,,
\end{equation}
where the supremum is taken over $z_1,z_2 \in \Cp$. In particular, the imaginary part of $m$ can be extended to the real line as a $1/3$-H\"older continuous function on the extended upper half plane $\im m: \overline{\Cp} \to \Bounded(\xSpace, \R^+_0)$.  
Due to the Stieltjes transform representation \eqref{m as stieltjes transform}, this extension  coincides with the generating density on the real line up to a factor of $\pi$, i.e.,
\[
v(\tau)\,=\, \pi^{-1}\im m(\tau)\,,\qquad \tau \in \R\,.
\]

If the supremum in \eqref{Holder continuity of im m} is taken over $z_1$  away from the support of the generating measure, $\abs{z_1}\geq 2\1\kappa$, then the finiteness  follows from \eqref{m as stieltjes transform}, the boundedness of $\im m$ (cf. \eqref{uniform bound}) and $\mathrm{supp} \2v_x \subseteq [-\kappa, \kappa\1]$. By symmetry, the same argument is made for $\abs{z_2}\geq 2\1\kappa$.

To prove \eqref{Holder continuity of im m} with the supremum taken over $z_1,z_2 \in \Cp$ with $\abs{z_1},\abs{z_2}\leq 2\kappa$ we differentiate both sides of \eqref{QVE} with respect to $z$, multiply by $m^2$ and collect the terms involving $\partial_z m$ on the left hand side,
\begin{equation}
\label{derivative of QVE}
( 1-m(z)^2S\1) \partial_z m(z) \,=\, m(z)^2.
\end{equation}
By the following lemma we can invert the operator $ 1-m(z)^2S$.
\begin{lemma}[Bulk stability]
\label{lmm:Bounds on B-inverse}
Let $z \in \Cp$ with $\abs{z}\leq2\kappa$. Then 
\begin{equation}
\norm{(\11-m(z)^2S\1)^{-1}} 
\,\lesssim\,
\avg{\2\im m(z)}^{-2}.
\end{equation}
\end{lemma}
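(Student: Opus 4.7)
The natural move is to reduce $1-m^2 S$ to an operator whose spectral structure is explicit. Writing $|M|$ for multiplication by $|m(z)|$ and $u_x := m_x/|m_x|$ for the unimodular phase, a direct computation gives the similarity
\begin{equation*}
|M|^{-1}\pb{1-m^2 S}|M|\,=\,1-U^2 F\,,
\end{equation*}
where $U$ is multiplication by $u$ and $F$ is the operator of Definition~\ref{def:operator F}. Proposition~\ref{prp:uniform bound} yields $|m|\sim 1$, so $|M|^{\pm 1}$ have norm $\sim 1$ on $\Bounded(\xSpace,\C)$, and it suffices to bound $\norm{(1-U^2 F)^{-1}}$ by $\avg{\im m}^{-2}$. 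By Lemma~\ref{lmm:Spectral radius of F} together with assumption~\textbf{(B)}, $F$ is compact, self-adjoint and positivity-improving on $L^2(\xSpace)$; its top eigenvalue $\rho:=\norm{F}_2\le 1$ is simple with a strictly positive eigenfunction $f$ normalised by $\norm{f}_2=1$, and there is a uniform spectral gap $\norm{F|_{f^\perp}}_2 \le (1-c)\rho$ for some $c>0$ depending only on $s$ and $a$. Relation~\eqref{relation between rho and alpha}, combined with $|m|\sim 1$ and $\im m_x \sim \avg{\im m}$ from Proposition~\ref{prp:uniform bound}, gives $1-\rho\sim \im z/\avg{\im m}$ and $f_x\sim 1$.

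\textbf{Schur reduction.} Let $P$ be the rank-one projection onto $f$ and $Q:=1-P$. Since $F$ commutes with $P$, the $QQ$-block of $1-U^2 F$ equals $Q(1-U^2 F_\perp)Q$ with $F_\perp:=FQ$, and is invertible on $f^\perp$ with $L^2$-norm $\lesssim 1$ by the spectral gap. Inverting via Schur complement reduces the whole inversion to estimating, on the one-dimensional range of $P$, the scalar
\begin{equation*}
\sigma\,:=\,1-\rho\,\avg{f\1U^2 f}-\rho\,\scalar{f}{U^2 F_\perp(Q-QU^2 F_\perp)^{-1}QU^2 f}\,.
\end{equation*}
The central identity is
\begin{equation*}
\re\pb{1-\avg{f\1U^2 f}}\,=\,2\int f_x^2\,\frac{(\im m_x)^2}{|m_x|^2}\,\xMeasure(\dd x)\,\gtrsim\,\avg{\im m}^2\,,
\end{equation*}
which uses $1-\re u_x^2 = 2(\im m_x)^2/|m_x|^2$ and the lower bounds $f_x\gtrsim 1$, $|m_x|\sim 1$. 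Combined with $1-\rho\ge 0$ and the forthcoming fact that the Schur correction does not spoil this positive real part, this yields $|\sigma|\gtrsim \avg{\im m}^2$, and hence $\norm{(1-U^2 F)^{-1}}_2\lesssim \avg{\im m}^{-2}$.

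\textbf{Upgrade to sup-norm.} The kernel of $F$ is uniformly bounded, so $F$ maps $L^2(\xSpace)$ continuously into $\Bounded(\xSpace,\C)$ with norm $\lesssim 1$. Starting from the resolvent identity $w=g+U^2 Fw$ for $(1-U^2 F)w=g$, one obtains $\norm{w}\le\norm{g}+C\norm{w}_2$. Since $\xMeasure$ is a probability measure $\norm{g}_2\le\norm{g}$, and the previously established $L^2$-bound then produces the desired sup-norm estimate $\norm{w}\lesssim \avg{\im m}^{-2}\norm{g}$.

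\textbf{Main obstacle.} The delicate step is extracting $|\sigma|\gtrsim \avg{\im m}^2$ rather than the easier $|\sigma|\gtrsim \avg{\im m}$ one would read off from the imaginary part alone. The Schur correction has magnitude $O(\avg{\im m})$ and is a priori comparable with the imaginary part of $1-\rho\avg{f U^2 f}$, so it could conceivably cancel the favourable $\avg{\im m}^2$-contribution to the real part. Ruling this out will require a quadratic-form/Ward-type identity exploiting the positivity of $F_\perp(Q-QU^2 F_\perp)^{-1}Q$ — obtained by pairing the imaginary part of the QVE against $f$ in the spirit of the derivation of \eqref{relation between rho and alpha} — which forces the real part of the correction to carry the correct sign.
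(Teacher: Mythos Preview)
Your overall architecture matches the paper's: the similarity $|M|^{-1}(1-m^2S)|M|=1-U^2F$, the reduction of the sup-norm bound to an $L^2$-bound via the boundedness of the kernel of $F$, and the central identity
\[
\re\pb{1-\avg{f\2u^2f}}\,=\,2\2\avgb{f^2\1|m|^{-2}(\im m)^2}\,\gtrsim\,\avg{\im m}^2
\]
are all exactly what the paper uses. The difference is in how the $L^2$-inverse is controlled, and this is where your proposal has a genuine gap.

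You set up a Schur complement relative to $P=\avg{f,\,\cdot\,}f$ and reduce everything to the scalar $\sigma$. You then explicitly flag, under ``Main obstacle'', that the Schur correction could cancel the $\avg{\im m}^2$-sized real part of the diagonal term, and you propose to fix this via an unspecified ``Ward-type identity'' using ``positivity of $F_\perp(Q-QU^2F_\perp)^{-1}Q$''. This is not an argument but a hope: the operator in question is not self-adjoint (its adjoint involves $\bar U^2$, not $U^2$), so ``positivity'' has no obvious meaning here, and no identity of the kind you describe is supplied. A sharper look actually shows that \emph{both} $Qu^2f$ and $Q\bar u^2 f$ have $L^2$-norm $O(\avg{\im m})$ (since $|\avg{fu^2f}|^2=1-O(\avg{\im m}^2)$), so the Schur correction is already $O(\avg{\im m}^2)$ rather than $O(\avg{\im m})$; but this only says the correction and the main term are of the \emph{same} order, and does nothing to rule out cancellation of the constants.

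The paper sidesteps this entirely. Instead of a Schur complement it proves an abstract bound (Lemma~\ref{lmm:Norm of B^-1-type operators on L2}): for any compact self-adjoint $T$ with $\norm{T}_2\le 1$ and any unitary $U$,
\[
\norm{(U-T)^{-1}}_2\,\le\,\frac{C}{\mathrm{Gap}(T)\2\abs{1-\norm{T}_2\scalar{h}{Uh}}}\,,
\]
whose proof is a three-regime case split on the relative sizes of $\norm{Pw}^2$, $\alpha:=\abs{1-\norm{T}_2\scalar{h}{Uh}}$ and $\norm{PUh}^2$, estimating $\norm{(U-T)w}$ from below directly in each regime. Applied with $T=F$ and $U=\bar u^2$ (so that $U-T=\bar u^2(1-u^2F)$), this yields the $L^2$-bound in one stroke, and the denominator is then bounded below by its real part --- precisely your central identity --- with no correction term to worry about. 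If you want to salvage the Schur route, you would need an honest sign argument for the correction; otherwise, the paper's case analysis is the missing ingredient.
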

The proof of Lemma \ref{lmm:Bounds on B-inverse} is provided at the end of this section. 
We apply the lemma to \eqref{derivative of QVE} and find $\norm{\partial_z m(z)}\lesssim  \avg{\im m(z)}^{-2}$ for  spectral parameters $\abs{z}\leq 2\1\kappa$. Since $z \mapsto m(z)$ is analytic the Cauchy-Riemann equations yield 
$ 2\1\ii\partial_z \im m = \partial_zm $. 
We infer that
\begin{equation}
\label{differential Holder inequality}
\abs{\partial_z \im m\1}\,\lesssim\, \avg{\1\im m\1}^{-2}\,\lesssim\, (\im m)^{-2},
\end{equation}
where we used \eqref{comparable imaginary part} in the last inequality. The differential inequality \eqref{differential Holder inequality} directly implies \eqref{Holder continuity of im m}.

It remains to show that $v$ is analytic on $\mathfrak{S}$. 
We fix $ \tau_0  $, with $ \avg{v(\tau_0)}> 0 $, and consider the complex ODE,
\begin{equation}
\begin{split}
\label{ODE for m}
 \partial_zw \,&=\,(1-w^2S)^{-1}w^2\,,
\quad w(\tau_0) \2=\2 w_0\,,
\end{split}
\end{equation}
for an analytic function $w: \mathbb{D}_\eps(\tau_0) \to \Bounded(\xSpace,\C)$ on the disc of radius $\eps>0$ around $\tau_0$. 
As initial value we choose $w_0:=m(\tau_0)$. The right hand side of the ODE is an analytic function on a neighborhood of $w_0$ in $\Bounded(\xSpace,\C)$ because $\norm{(1-w_0^2S)^{-1}}\lesssim \avg{v(\tau_0)}^{-2}$ initially by Lemma \ref{lmm:Bounds on B-inverse}.
By standard methods the ODE locally has a unique analytic solution that coincides with the solution $m$ of the QVE on $\mathbb{D}_\eps(\tau_0) \cap \overline{\Cp}$, provided $\eps$ is sufficiently small.
\end{proof}

\begin{proof}[Proof of Corollary \ref{crl:Holder-regularity of the solution}]
The Stieltjes transform of a H\"older continuous function is again H\"older continuous with the same exponent. This is formally expressed by the following lemma. We refer to, e.g.~\cite{Mu}, Sec.~22, for details.
\end{proof}

\begin{lemma}
\label{Holder continuity}
Let $\mu$ be a uniformly $\alpha$-H\"older continuous function on $\R$ with $\alpha \in (0,1)$. Then its Stietjes-transform,
\[
(M\mu)
(z)\,=\, \int_\R \frac{\mu(\tau)\dd \tau}{\tau\2-\2z}\,,\qquad z \in \Cp\,,
\]
is uniformly $\alpha$-H\"older continuous on $\Cp$. In particular, $M\mu$ can be extended to a uniformly $\alpha$-H\"older continuous function on $\overline{\Cp}$.
\end{lemma}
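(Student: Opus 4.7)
The result is the classical Plemelj--Privalov preservation of Hölder regularity under Cauchy integrals, so my plan is to follow the standard split-kernel argument. The statement requires $\mu$ to be at least integrable for $(M\mu)$ to be defined, and since in the intended application $\mu$ is a component $v_x$ of the generating density with compact support in $[-\kappa,\kappa]$ by Proposition~\ref{prp:Existence and uniqueness}, I would reduce to the compactly supported case from the outset.

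The starting point is to fix $z_1,z_2 \in \mathbb{H}$, set $\delta := |z_1 - z_2|$ (assuming $\delta$ small; otherwise the statement follows from boundedness of $M\mu$), and use the resolvent identity
\[
(M\mu)(z_1) - (M\mu)(z_2) \;=\; (z_1 - z_2)\int_{\mathbb{R}} \frac{\mu(\tau)}{(\tau - z_1)(\tau - z_2)}\,d\tau.
\]
I would then introduce a real reference point $x_0 := \re\big((z_1+z_2)/2\big)$ and split the integration into a near region $N := \{\tau : |\tau - x_0| \leq 2\delta\}$ and a far region $F := \mathbb{R}\setminus N$, writing $\mu(\tau) = \mu(x_0) + (\mu(\tau) - \mu(x_0))$ in both.

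For the Hölder-remainder contribution on $F$, I would use the pointwise bound $|\tau - z_j| \gtrsim |\tau - x_0|$ together with $|\mu(\tau) - \mu(x_0)| \lesssim |\tau - x_0|^\alpha$ and compact support of $\mu$ to estimate this piece by $\delta \int_{2\delta}^{C} r^{\alpha - 2}\,dr \lesssim \delta^\alpha$. For the constant piece $\mu(x_0)$, which cannot be pulled through the whole real line (the kernel $((\tau - z_1)(\tau - z_2))^{-1}$ is only conditionally integrable at infinity), I would use compactness of $\mathrm{supp}\,\mu$ to evaluate $(z_1 - z_2) \int_{\mathrm{supp}\,\mu} ((\tau - z_1)(\tau - z_2))^{-1}\,d\tau$ as a difference of elementary logarithms; its $\delta$-dependence is $O(\delta \log \delta^{-1})$, which is absorbed into $\delta^\alpha$ for any $\alpha \in (0,1)$.

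The main obstacle is the uniform estimate in the near region $N$, because the denominator $|(\tau - z_1)(\tau - z_2)|$ may be as small as $y_1 y_2$ (where $y_j = \im z_j$), which is not controlled by $\delta$ alone when $z_j$ approaches the real axis. The standard remedy is to couple a Cauchy--Schwarz-type bound with the elementary identity $\int_{\mathbb{R}} |\tau - z_j|^{-2}\,d\tau = \pi/y_j$, organized so as to extract precisely the $\delta^\alpha$ gain from the Hölder regularity rather than any $y_j$-dependent blow-up. Once this bookkeeping (carried out in Muskhelishvili, Sec.~22) is done, uniform $\alpha$-Hölder continuity of $M\mu$ on $\mathbb{H}$ is established, and extension to $\overline{\mathbb{H}}$ with the same modulus is automatic, since a uniformly continuous function on a dense subset extends uniquely to the closure.
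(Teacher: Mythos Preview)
The paper does not prove this lemma at all; it simply states ``We refer to, e.g.~\cite{Mu}, Sec.~22, for details.'' Your proposal sketches precisely the classical Plemelj--Privalov argument that this reference contains, and you yourself defer the delicate near-region estimate back to the same section of Muskhelishvili, so your approach is entirely consistent with---and more informative than---what the paper offers.
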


For the proof of Lemma \ref{lmm:Bounds on B-inverse} we will need more spectral information about the operator $F$ than the simple formula \eqref{relation between rho and alpha} for its spectral radius. In particular, we need a uniform spectral gap, whose formal definition is as follows.

\begin{definition}[Spectral gap]
\label{def:Spectral gap}
Let $ T:L^{2}(\xSpace) \to L^{2}(\xSpace) $ be a compact self-adjoint operator. The spectral gap $ \mathrm{Gap}(T)\geq 0 $ is the difference between the two largest eigenvalues of $ \abs{T} $ (defined by spectral calculus). If $ \norm{T}_{2}$ is a degenerate eigenvalue of $ \abs{T} $, then $ \mathrm{Gap}(T) = 0 $. 
\end{definition}

\begin{lemma}[Spectral gap of $F\2$]
\label{lmm:operator F}
Let $F(z)$ be as in \eqref{definition of F} for $\abs{z}\leq 2 \kappa$. Then the spectral radius $\norm{F(z)}_2\sim 1$
 is a non-degenerate eigenvalue
with corresponding $\norm{\2\cdot\2}_2$-normalized non-negative eigenfunction $ f(z) \in \Bounded(\xSpace,\R_0^+)$ satisfying
\begin{equation}
\label{f comparable to 1}
 f(z) \,\sim\, 1\,.
\end{equation}
The operator $ F(z)$ has a uniform spectral gap
\begin{equation}
\label{F spectral gap}
\mathrm{Gap}( F(z))\,\sim\, 1\,.
\end{equation}
\end{lemma}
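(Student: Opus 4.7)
The plan is to leverage the primitivity assumption \textbf{(B)} together with $\abs{m}\sim 1$ from Proposition~\ref{prp:uniform bound} in order to promote Lemma~\ref{lmm:Spectral radius of F} through a Perron--Frobenius argument applied to $F^K$, and then convert the uniform positivity of the iterated kernel into a quantitative spectral gap via a Birkhoff--Hopf / Doeblin-type estimate.

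The bounds $\norm{F(z)}_2 \sim 1$ come first. The upper bound $\norm{F(z)}_2 \le 1$ is immediate from identity \eqref{relation between rho and alpha}, since $\im z \ge 0$ and both averages there are non-negative. For the lower bound, evaluating the Rayleigh quotient of $F$ on the constant test function gives
\[
\norm{F(z)}_2 \;\ge\; \avg{\1\abs{m}\2 S\2\abs{m}\1}\,,
\]
and assumption \textbf{(A)} combined with $\abs{m}\gtrsim 1$ yields $\avg{\abs{m}\2 S\2\abs{m}} \ge \avg{(R\1\abs{m})^2}\gtrsim 1$, exactly as in the proof of Proposition~\ref{prp:uniform bound}.

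Next, iterating the definition of $F$, the kernel of $F^K$ equals
\[
\phi^{(K)}(x,y)\;=\;\abs{m_x}\1\abs{m_y}\!\int s_{x y_1}\abs{m_{y_1}}^2 s_{y_1 y_2}\cdots\abs{m_{y_{K-1}}}^2 s_{y_{K-1} y}\,\xMeasure(\dd y_1)\cdots\xMeasure(\dd y_{K-1})\,.
\]
Using $\abs{m}\sim 1$ together with \textbf{(B)} and the boundedness of $s$, this kernel is two-sided bounded, $c\le \phi^{(K)}(x,y)\le C$, uniformly in $(x,y,z)$ for some constants $0<c<C<\infty$ depending only on the data. In particular $F^K$ is compact, self-adjoint and positivity-improving on $L^2(\xSpace)$, and the Krein--Rutman theorem sharpens Lemma~\ref{lmm:Spectral radius of F}: the top eigenvalue $\norm{F(z)}_2^K$ of $F^K$ is simple with a strictly positive eigenfunction that must coincide with the $f$ from Lemma~\ref{lmm:Spectral radius of F}. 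Testing $F^K f = \norm{F(z)}_2^K f$ pointwise and using the two-sided kernel bound gives $f_x \sim \avg{f}$; the $L^2$-normalisation together with $f \lesssim 1$ then forces $\avg{f}\gtrsim 1$, so $f \sim 1$.

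For the spectral gap, since $F$ is self-adjoint with simple top eigenvalue, its eigenfunctions diagonalise $F^K$ as well, so $\abs{\mu_2(F)}^K = \abs{\mu_2(F^K)}$ and it is enough to produce a uniform gap for $F^K$. I would invoke the Birkhoff--Hopf contraction principle---equivalently, Doeblin's minorisation applied to the reversible Markov kernel $p(x,y) = \phi^{(K)}(x,y)\1 f_y / (\norm{F}_2^K\1 f_x)$, which inherits a uniform lower bound $p(x,y)\gtrsim 1$ from $\phi^{(K)}\gtrsim 1$ and $f\sim 1$---to conclude that the sub-dominant eigenvalue of $F^K$ is at most a factor strictly less than $1$ of $\norm{F^K}_2$, with the factor depending only on $C/c$. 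Extracting a $K$-th root and combining with $\norm{F(z)}_2\sim 1$ converts this into $\mathrm{Gap}(F(z))\gtrsim 1$ uniformly in $z$. The main obstacle is this final step: compactness in $z$ is unavailable at this point of the paper (H\"older regularity of $m$ in $z$ is proven only in Section~\ref{sec:Regularity of the solution} and already relies on Lemma~\ref{lmm:operator F} via the bulk stability Lemma~\ref{lmm:Bounds on B-inverse}), so the gap must be extracted from the quantitative uniform positivity of $\phi^{(K)}$ alone, independently of the dimension of $\xSpace$.
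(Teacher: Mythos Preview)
Your overall strategy matches the paper's: establish $\norm{F}_2 \le 1$ from \eqref{relation between rho and alpha} and $\norm{F}_2 \gtrsim 1$ from the Rayleigh quotient on constants, pass to $F^K$ so that the uniform lower bound on $s^{(K)}$ from \textbf{(B)} together with $\abs{m}\sim 1$ yields a two-sided bound $\phi^{(K)}\sim 1$ on the iterated kernel, apply Krein--Rutman to get simplicity of the top eigenvalue and $f\sim 1$, and finally convert the gap for $F^K$ into one for $F$ by the $K$-th root argument you describe. The only substantive difference is the tool used in the last step. The paper does not invoke Birkhoff--Hopf or Doeblin; instead it proves the elementary Lemma~\ref{lmm:Spectral gap for operators with positive kernel}, which for a compact self-adjoint operator $T$ with non-negative kernel and top eigenfunction $h$ gives
\[
\mathrm{Gap}(T)\;\ge\;\Bigl(\frac{\norm{h}_2}{\norm{h}}\Bigr)^{\!2}\inf_{x,y}\,t_{xy}
\]
via a direct quadratic-form expansion of $\avg{u,(1\pm T)u}$. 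Applied to $T=F^K/\norm{F^K}_2$ this yields the gap without the similarity transform to a Markov kernel. Your Doeblin route is correct and produces the same uniform-in-$z$ estimate; the paper's lemma is simply shorter and fully self-contained.

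Your closing worry about circularity is unfounded, and you in fact resolve it yourself: the gap comes solely from the pointwise bound $\phi^{(K)}\sim 1$, which depends only on $\abs{m}\sim 1$ from Proposition~\ref{prp:uniform bound} and on \textbf{(B)}, not on any $z$-regularity of $m$. There is no obstacle.
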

\begin{proof}
Since $\abs{m}\sim 1$ the operator $F$ has the property \textbf{(B)}  in place of $S$. Therefore, $T:=F^K/\norm{F^K}_2$ has a symmetric non-negative kernel $t_{x y}\sim 1$. In particular, $T$ is compact, when viewed as an operator on $L^{2}(\xSpace)$, and by the Krein-Rutman theorem its spectral radius $\norm{T}_2=1$ is a non-degenerate eigenvalue with corresponding normalized non-negative eigenfunction $h \in L^{2}(\xSpace)$. By the pointwise boundedness of the kernel of $T$ from both above and below, the eigenvalue equation $h=Th$ implies that $h\sim 1$. 
The result follows from Lemma \ref{lmm:Spectral gap for operators with positive kernel} below, noticing that $f=h$.
\end{proof}

\begin{lemma}[Spectral gap for operators with positive kernel]
\label{lmm:Spectral gap for operators with positive kernel}
Let $T$ be a symmetric compact integral operator on $L^{2}(\xSpace) $ with a non-negative integral kernel $ t_{xy} = t_{yx} \ge 0 $. 
Then
\[
\mathrm{Gap}(T) \,\ge\, 
\biggl(\!\frac{\,\norm{h}_2\!}{\norm{h}}\!\biggr)^2\!\inf_{x,y \in \xSpace}t_{x y}
\,,
\]
where $ h $ is an eigenfunction with $ Th = \norm{T}_2\1h$. 
\end{lemma}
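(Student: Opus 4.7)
The plan is to peel off a rank-one piece of $T$ in the Perron direction and reduce the gap bound to an operator-norm estimate on the residual. Set $\tau := \inf_{x,y}t_{xy}$ (the claim is trivial when $\tau=0$, so assume $\tau>0$) and $\lambda := \norm{T}_2$. By Krein--Rutman I may replace the given $h$ with a non-negative top eigenfunction (the ratio $\norm{h}_2/\norm{h}$ is scale-invariant, and the top eigenspace turns out to be one-dimensional, as the proof itself will verify); the lower bound $h_x = \lambda^{-1}\!\int t_{xy}h_y\1\xMeasure(\dd y) \geq \tau\avg{h}/\lambda>0$ then upgrades $h$ to be strictly positive. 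Define
\[
c \,:=\, \tau\1\frac{\norm{h}_2^2}{\norm{h}^2}\,, \qquad T' \,:=\, T - c\1 P_h\,,
\]
where $P_h$ is the orthogonal projection of $L^2(\xSpace)$ onto the span of $h$. I will then check in turn that $T'$ has a non-negative kernel, that $T'h=(\lambda-c)h$ with $\lambda\geq c$, and that $\norm{T'}_2 = \lambda-c$.

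Non-negativity of the kernel of $T'$ is immediate from $c\2 h_x h_y/\norm{h}_2^2\leq c\2\norm{h}^2/\norm{h}_2^2 = \tau\leq t_{xy}$. The identity $T'h=(\lambda-c)h$ is clear because $P_h h=h$; for $\lambda\geq c$ I would combine the H\"older bound $\norm{h}_2^2\leq \avg{h}\norm{h}$ with $\lambda\norm{h}_2^2 = \scalar{h}{Th}\geq \tau\avg{h}^2$ to get $\lambda\geq \tau\avg{h}^2/\norm{h}_2^2 \geq \tau\norm{h}_2^2/\norm{h}^2 = c$.

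The main obstacle is identifying $\norm{T'}_2$ with $\lambda-c$. The uniqueness half of Krein--Rutman is unavailable, since $T'$ need not be irreducible --- its kernel vanishes whenever $t_{xy} = \tau$ and $h_x h_y = \norm{h}^2$ hold simultaneously. I would circumvent this by using only the existence half: since $T'$ has a non-negative kernel, there exists a non-negative eigenfunction $v$ with $T'v = \norm{T'}_2\1 v$. Pairing against $h$ and using the symmetry of $T'$,
\[
(\lambda-c)\1\scalar{v}{h}\,=\,\scalar{v}{T'h}\,=\,\scalar{T'v}{h}\,=\,\norm{T'}_2\1\scalar{v}{h}\,,
\]
and $\scalar{v}{h}>0$ because $h>0$ while $v\geq 0$ is non-trivial, forcing $\norm{T'}_2 = \lambda-c$.

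To conclude, $P_h$ annihilates $h^\perp$, so $T$ and $T'$ coincide on this invariant subspace and $\norm{T|_{h^\perp}}_2\leq \norm{T'}_2 = \lambda-c$. The spectral theorem for compact self-adjoint operators identifies $\norm{T|_{h^\perp}}_2$ with the second-largest eigenvalue of $\abs{T}$; were there any eigenvector of $T$ in $h^\perp$ with eigenvalue $\pm\lambda$, the same estimate applied to that vector would give $\lambda\leq\lambda-c$, contradicting $c>0$, so $\lambda$ is simple in $\abs{T}$ and $\mathrm{Gap}(T)$ is well-defined. Therefore $\mathrm{Gap}(T)\geq \lambda-(\lambda-c)=c = \tau(\norm{h}_2/\norm{h})^2$, as required.
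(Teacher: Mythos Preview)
Your proof is correct and takes a genuinely different route from the paper. The paper argues directly at the level of quadratic forms: for $u\perp h$ (with $\norm{T}_2=1$, $\norm{h}_2=1$) it writes the identity
\[
\avg{\1u,(1\pm T)u\1}
\,=\,
\frac{1}{2}\iint t_{xy}\,\Bigl(u_x\sqrt{h_y/h_x}\,\pm\,u_y\sqrt{h_x/h_y}\,\Bigr)^{\!2}\xMeasure(\dd y)\1\xMeasure(\dd x)\,,
\]
which is a ground-state transformation using the eigenfunction equation $Th=h$, and then bounds $t_{xy}\ge \eps\1 h_xh_y/\norm{h}^2$ inside the integral to obtain $\avg{u,(1\pm T)u}\ge(\eps/\norm{h}^2)\norm{u}_2^2$. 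This is entirely elementary and needs no further spectral input.

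Your approach is more structural: you subtract the largest rank-one piece $c\1P_h$ that keeps the kernel non-negative, then use the Perron--Frobenius/Krein--Rutman principle on the residual $T'$ to pin down $\norm{T'}_2=\lambda-c$, and transfer this to $h^\perp$ since $T'$ and $T$ agree there. The trade-off is that your argument imports Krein--Rutman (which the paper already uses elsewhere, so this is consistent with the ambient toolkit), whereas the paper's proof is self-contained. On the other hand, your version makes the mechanism behind the bound --- that the uniform lower bound on the kernel is exactly a rank-one operator in the top direction --- completely transparent, and it avoids the somewhat ad hoc square-completion identity.
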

For the convenience of the reader we provide the proof of this standard result in the appendix. 

\begin{definition}[Eigenfunction $f\2$]
Let $z \in \Cp$ and $F(z)$ be as in \eqref{definition of F}. Then by Lemma \ref{lmm:operator F} the eigenvalue $\norm{F(z)}_2$ is non-degenerate. We will always denote by $f(z)$ the corresponding $\norm{\2\cdot\2}_2$-normalized non-negative eigenfunction.
\end{definition}

\begin{proof}[Proof of Lemma \ref{lmm:Bounds on B-inverse}] 
We fix $z \in \Cp$ with $\abs{z}\leq2\kappa$ and introduce the notation
\[
B\,:=\, m^{-2}\abs{m}^2- F\,.
\]
First we notice that it suffices to estimate the norm of $B^{-1}$ as an operator on $L^2(\xSpace)$ from above, because
\begin{equation}
\label{infinity to infinity from L2 to L2}
\norm{\1(\11-m^2S\1)^{-1}} 
\,\lesssim\,
\norm{B^{-1}}\,\lesssim\,1+ \norm{B^{-1}}_2 \,.
\end{equation}
Indeed, the first inequality follows from \eqref{uniform bound} and 
\[
(\11-m^2S\1)w \,=\, \abs{m}^{-1}m^2 B (\abs{m}^{-1}w)\,,\qquad w \in \Bounded(\xSpace,\C)
\,.
\]
For the second inequality in \eqref{infinity to infinity from L2 to L2} we use the following argument. 

Suppose $B^{-1}$ exists and is bounded on $L^2(\xSpace)$ and let $h \in  \Bounded(\xSpace, \C)$. Then there is some $w \in L^2(\xSpace)$ such that $Bw=h$. In particular, by the definition of $B$ we have
\[
\norm{w}\,
\leq\,
\norm{h}+\norm{Fw} 
\,\leq\, 
\norm{h}+ \norm{m}^2\avg{\abs{w}}\sup_{x,\1y \1\in\1 \xSpace}s_{x y}
\,\lesssim\,
\norm{h}+\norm{w}_2
\,=\, \norm{h}+\norm{B^{-1}h}_2
\,.
\]
Now we use the boundedness of $B^{-1}$ on $L^2(\xSpace)$, i.e., $\norm{B^{-1}h}_2\,\leq\, \norm{B^{-1}}_2\norm{h}$, and \eqref{infinity to infinity from L2 to L2} follows. 

It remains to show that $\norm{B^{-1}}_2\lesssim \avg{\1\im m\1}^{-2}$. We apply the following lemma, which is proven in the appendix.
\begin{lemma}
\label{lmm:Norm of B^-1-type operators on L2}
Let $ T $ be a compact self-adjoint and $U$ a unitary operator on $ L^{2}(\xSpace) $. 
Suppose that $ \mathrm{Gap}(T)> 0 $ and $\norm{T}_2\leq 1 $. 
Then there exists a universal positive constant $C$ such that
\begin{equation}
\label{bound on U-T inverse}
\norm{(\1U-T\1)^{-1}}_2
\,\leq\, 
\frac{C}{ \mathrm{Gap}(T)\,\abs{\11 -\norm{T}_2\langle\1 h, Uh\rangle\1}}
\,,
\end{equation}
where $ h $ is the normalized eigenvector, corresponding to the non-degenerate eigenvalue 
$\norm{T}_2$ of  $ T$ and $\scalar{\2\cdot\2}{\2\cdot\2}$ denotes the standard scalar product on $L^2(\xSpace)$.
\end{lemma}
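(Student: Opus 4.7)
The strategy is to decompose $T$ spectrally and treat $(U-T)^{-1}$ via a rank-one perturbation argument. Let $P$ denote the orthogonal projection onto the span of $h$, set $Q := I - P$, and write $T = \alpha P + T_\perp$ with $\alpha := \norm{T}_2$ and $T_\perp := QTQ$. Since $h$ is an eigenvector of $T$, this split is $T$-invariant; the spectral gap hypothesis gives $\norm{T_\perp}_2 \leq \alpha - \mathrm{Gap}(T) \leq 1 - \mathrm{Gap}(T)$ and, crucially, $T_\perp h = 0$. Abbreviate $\beta := \scalar{h}{Uh}$ and decompose $Uh = \beta h + \eta$ with $\eta \in h^\perp$ and $\norm{\eta}_2^2 = 1 - \abs{\beta}^2$.

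The first step is to invert the ``regular'' part $V := U - T_\perp$. Unitarity of $U$ together with the operator triangle inequality yield
\[
\norm{Vw}_2 \,\geq\, \norm{Uw}_2 - \norm{T_\perp w}_2 \,\geq\, \mathrm{Gap}(T)\,\norm{w}_2\,,
\]
so $V$ is invertible with $\norm{V^{-1}}_2 \leq 1/\mathrm{Gap}(T)$. Then, since $(U - T) = V - \alpha P$ is a rank-one perturbation of $V$, the Sherman--Morrison formula gives
\[
(U - T)^{-1} \,=\, V^{-1} + \frac{\alpha\, V^{-1} P\, V^{-1}}{1 - \alpha \scalar{h}{V^{-1}h}}\,,
\]
valid whenever the denominator is nonzero, and hence
\[
\norm{(U - T)^{-1}}_2 \,\leq\, \frac{1}{\mathrm{Gap}(T)} + \frac{\alpha\, \norm{V^{-1}h}_2\, \norm{(V^*)^{-1}h}_2}{\abs{1 - \alpha\scalar{h}{V^{-1}h}}}\,.
\]

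The remaining, and most delicate, task is to show $\abs{1 - \alpha\scalar{h}{V^{-1}h}} \gtrsim \mathrm{Gap}(T)\abs{1-\alpha\beta}$ while simultaneously keeping the numerator $\norm{V^{-1}h}_2 \norm{(V^*)^{-1}h}_2$ of order one. The key algebraic observation is $Vh = Uh$ (from $T_\perp h = 0$), which gives $V^{-1}h = (h - V^{-1}\eta)/\beta$ and consequently
\[
1 - \alpha\scalar{h}{V^{-1}h} \,=\, \frac{\beta - \alpha + \alpha\scalar{h}{V^{-1}\eta}}{\beta}\,.
\]
Applying the analogous identity to $V^*$, with $\xi := U^*h - \bar\beta h \in h^\perp$, and exploiting the orthogonality $\scalar{h}{\eta} = \scalar{h}{\xi} = 0$ one finds $\scalar{h}{V^{-1}\eta} = -\scalar{(V^*)^{-1}\xi}{\eta}/\bar\beta$, whose magnitude is at most $(1-\abs{\beta}^2)/(\abs{\beta}\mathrm{Gap}(T))$. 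Combining with the rearrangement $\bar\beta(\beta-\alpha) = (1-\alpha\bar\beta) - (1-\abs{\beta}^2)$ and the elementary inequality $1-\abs{\beta}^2 \leq 2\abs{1-\alpha\beta}$ (valid since $\alpha \leq 1$) reduces the denominator estimate to lower-bounding the explicit quantity $(1-\alpha\bar\beta) - (1-\abs{\beta}^2) - \alpha\scalar{(V^*)^{-1}\xi}{\eta}$. The main obstacle I anticipate is that these three terms can a priori cancel each other; the natural resolution is a case split into $\abs{1-\alpha\beta}$ comparable to $\mathrm{Gap}(T)$ (handled by the direct bound $\norm{(U-T)^{-1}}_2 \lesssim 1/\mathrm{Gap}(T)$ coming from $\norm{V^{-1}}_2$ alone) versus $\abs{1-\alpha\beta} \ll \mathrm{Gap}(T)$ (where $\abs{\beta}$ is then forced close to $1$, making $\norm{\eta}_2$ and hence the two correction terms genuinely subleading relative to the principal term $1-\alpha\bar\beta$).
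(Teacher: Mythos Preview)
Your Sherman--Morrison approach is a natural alternative to the paper's argument and the setup is clean: writing $T=\alpha P+T_\perp$ and inverting $V=U-T_\perp$ with $\|V^{-1}\|_2\le 1/\mathrm{Gap}(T)$ is correct. But the decisive step --- lower-bounding the Sherman--Morrison denominator $|1-\alpha\langle h,V^{-1}h\rangle|$ in terms of $|1-\alpha\beta|$ --- is left as a sketch, and the sketch does not close. In your Case~A you assert that $\|(U-T)^{-1}\|_2\lesssim 1/\mathrm{Gap}(T)$ follows ``from $\|V^{-1}\|_2$ alone'', but $(U-T)^{-1}\neq V^{-1}$: the Sherman--Morrison correction still involves the uncontrolled denominator and can blow up. (Moreover, your identity $V^{-1}h=(h-V^{-1}\eta)/\beta$ is undefined when $\beta=0$, a case squarely inside Case~A since then $|1-\alpha\beta|=1$.) In Case~B your ``subleading'' claim is false: your own inequality gives only $1-|\beta|^2\le 2|1-\alpha\beta|$, so the second correction term is of the \emph{same} order as the principal term $1-\alpha\bar\beta$; and the third correction is bounded only by $(1-|\beta|^2)/\mathrm{Gap}(T)$, which can exceed $|1-\alpha\beta|$ by a factor $1/\mathrm{Gap}(T)$ when the gap is small. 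Nothing you have written rules out cancellation among the three terms.

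The paper sidesteps this denominator problem entirely by working on the other side: instead of decomposing the operator it lower-bounds $\|(U-T)w\|_2$ for each unit vector $w$, splitting $w=\langle h,w\rangle h+Pw$ and treating three regimes according to the size of $\|Pw\|^2$ relative to $|1-\alpha\beta|$ and $\|PUh\|^2$. When $\|Pw\|$ is large one uses $\|(U-T)w\|\ge \|w\|-\|Tw\|$ together with the gap; in the other two regimes one projects $(1-U^*T)w$ onto $h$, respectively $(U-T)w$ onto $h^\perp$, and reads off a lower bound of order $|1-\alpha\beta|$ directly. If you wish to push the Sherman--Morrison route through, you will need a structural identity rather than just size bounds relating $\langle h,V^{-1}h\rangle$ to $\beta$; for instance $\langle h,V^{-1}h\rangle=\bar\beta+\langle\eta,T_\perp V^{-1}h\rangle$, obtained from $V^{-1}h=U^*h+U^*T_\perp V^{-1}h$, is a cleaner starting point than your two nested divisions by $\beta$, but controlling the remaining error still requires genuine additional work.
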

With the choice $T:=F$, the unitary multiplication operator $U:=m^{-2}\abs{m}^2$  and $\norm{F}_2\leq 1$ by \eqref{relation between rho and alpha}, the lemma implies
\begin{equation}
\norm{B^{-1}} _2
\,\leq\, 
\frac{C}{\mathrm{Gap}(F)\2\abs{\21 - \norm{\1F\1}_2\avg{\1m^{-2}\abs{m}^2f^{\12}\1}}\2}
\,\lesssim\, 
\frac{1}{
\re\bigl[\11 -\avg{\1\abs{m}^{-2}m^2f^{\12}\1}\1\bigr]
}
\,,
\end{equation}
where we used $\mathrm{Gap}(F) \sim 1$ from Lemma \ref{lmm:operator F} for the second inequality. Since we also have \eqref{f comparable to 1}, \eqref{uniform bound} and \eqref{comparable imaginary part} at our disposal, we can estimate further,
\begin{equation}
\re \bigl[\2
1 -\avg{\1\abs{m}^{-2}m^2f^{\12}}
\bigr]
\,=\,2\1\avgb{\1\abs{m}^{-2}(\1\im m)^2f^{\12}\1}\,\gtrsim \,\avg{\1\im m\1}^2.
\end{equation}
\end{proof}

\section{Singularities}
\label{sec:Singularities}

In this section we will prove our main result, Theorem \ref{thr:Regularity and singularities of the generating density}. Its proof relies on a careful analysis of how $m(z)$ changes in $z$ in a neighborhood of a singular point $\tau \in \partial \mathfrak{S}$. This change will be described in leading order by a complex valued scalar function $\Theta$ which satisfies a cubic equation. As a solution to this cubic equation, $\Theta$ can only give rise to square root or cubic root singularities of the generating density at $\tau$.

\begin{definition}[Extensions to the real line]
Under the assumptions \textbf{(A)}, \textbf{(B)} and \textbf{(C)} (which are always assumed here)
we extend the solution $m$ of the QVE and all its derived quantities, such as $F$, $f$, etc., to $\overline{\Cp}$ according to Corollary \ref{crl:Holder-regularity of the solution}. We denote these extensions with the same symbols. 
\end{definition}
 
\begin{proposition}[Cubic equation]
\label{prp:Cubic equation} 
 For any given $\tau \in \partial \mathfrak{S}$, the complex valued function 
\begin{equation}
\label{definition of Theta}
\Theta(\omega;\tau)\,:=\, \avgB{f(\tau) \,\frac{m(\tau+\omega)-m(\tau)}{|\1m(\tau)|}}\,,\qquad \omega \in [-\kappa,\kappa]\,,
\end{equation}
describes the change of $m$ around $\tau$ to leading order,
\begin{equation}
\label{leading order of change}
m(\tau+\omega)-m(\tau)
\;=\;
\Theta(\omega;\tau)\2|\1m(\tau)|\2f(\tau)+\ord\pb{ \abs{\Theta}^2+|\omega|}.
\end{equation}
 The function $\Theta$ solves the approximate cubic equation,
\begin{equation}
\label{cubic equation}
\psi(\tau)\2\Theta(\omega;\tau)^3+\sigma(\tau)\1\Theta(\omega;\tau)^2 +\avg{|m(\tau)|f(\tau)}\omega
\;=\: e(\omega;\tau) 
\,,
\end{equation}
where the error term $e(\omega;\tau)$ satisfies
\begin{subequations}
\label{bounds on e}
\begin{align}
\label{bound on |e|}
\abs{\1e(\omega;\tau)}\,&\lesssim\, |\omega|\abs{\Theta(\omega;\tau)}+|\omega|^2
,
\\
\label{bound on |Im e|}
\abs{\1\im e(\omega;\tau)}\,&\lesssim\, |\omega|\im \Theta(\omega;\tau)\,.
\end{align}
\end{subequations}
The real valued coefficients $\sigma$ and $\psi$ are defined as
\begin{equation}
\label{definition of sigma and psi}
\sigma(\tau) \,:=\, \avgb{ \1 f(\tau)^3\1 \mathrm{sign} \2 m(\tau) \1}\,,\qquad
\psi(\tau)\,:=\, \mathcal{D}_\tau\p{f(\tau)^2\mathrm{sign} \2m(\tau)} \,,
\end{equation}
where the non-negative quadratic form $\mathcal{D}_\tau$ is given by
\begin{equation}
\label{definition of D}
\mathcal{D}_\tau(w)\,:=\,
\avgB{w\2Q(\tau) \frac{1+F(\tau)}{1-F(\tau)}\1Q(\tau) w}\,,
\qquad Q(\tau) w\,:=\, w- \avg{f(\tau)w}f(\tau)\,,
\end{equation}
for any  $w \in \Bounded(\xSpace,\C)$.
The cubic equation for $\Theta$ is stable in the sense that
\begin{equation}
\label{stability of cubic}
\psi(\tau) + \sigma(\tau)^2 \,\sim\,1 \,, \qquad \tau \in [-\kappa,\kappa\2]\,.
\end{equation}
\end{proposition}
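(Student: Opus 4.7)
My plan is to reduce the QVE to a scalar cubic via a Lyapunov--Schmidt-type splitting on the one-dimensional kernel of the linearized operator at the boundary point $\tau$, and then to carefully track the remainder. First I would take the difference of the QVE at $z=\tau+\omega$ and at $z=\tau$. Setting $u := m(\tau+\omega)-m(\tau)$, the elementary rearrangement $u/(m(\tau)m(\tau+\omega)) = \omega+Su$ together with $m(\tau+\omega)=m(\tau)+u$ produces
\[
(\11-m(\tau)^2S\1)\1u \;=\; m(\tau)^2\1\omega \,+\, m(\tau)\1u\1\omega \,+\, m(\tau)\1u\,Su\,.
\]
Using the identity $(1-m^2S)w = (m^2/|m|)\1 B\1 (|m|^{-1}w)$ with $B := m^{-2}|m|^2 - F$ (already exploited in the proof of Lemma~\ref{lmm:Bounds on B-inverse}) and setting $\tilde u := |m|^{-1}u$, this becomes
\[
B\,\tilde u \;=\; |m|\1\omega \,+\, (|m|^2/m)\1\omega\,\tilde u \,+\, (1/m)\,\tilde u\,F\,\tilde u\,.
\]
Since $\tau\in\partial\mathfrak{S}$ forces $\im m(\tau)=0$ (by the comparability $\im m_x\sim\im m_y$ and the definition of $\mathfrak{S}$, extended to $\R$ via Corollary~\ref{crl:Holder-regularity of the solution}), $m(\tau)$ is real, $|m|^2/m = m = |m|\epsilon$ with $\epsilon := \mathrm{sign}\,m\in\{-1,+1\}$, and $B=1-F$ where $\norm{F(\tau)}_2=1$ by continuity of the spectral radius formula \eqref{relation between rho and alpha}.

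Next I would decompose $\tilde u = \Theta\,f + r$, where $f = f(\tau)$ is the $L^2$-normalized Perron eigenvector of $F(\tau)$ and $\avg{f\1r}=0$; since $\avg{f^{\12}}=1$, this scalar $\Theta$ agrees with \eqref{definition of Theta}. Taking the $L^2(\xSpace)$-inner product of the equation for $\tilde u$ with $f$ makes the left-hand side vanish (by self-adjointness of $B$ and $Bf=0$), yielding the scalar identity
\[
0 \;=\; \avg{f\1|m|}\,\omega \,+\, \omega\,\avg{f\1 m\,\tilde u} \,+\, \avg{f\,\epsilon\,\tilde u\,F\tilde u}\,.
\]
Substituting $\tilde u = \Theta f + r$ and using $Ff=f$ together with self-adjointness of $F$, the quadratic term splits as $\sigma\,\Theta^2 + \Theta\,\avg{f^2\epsilon\,(1+F)r} + \avg{f\,\epsilon\,r\,Fr}$, isolating the coefficient $\sigma=\avg{f^3\epsilon}$. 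To produce the cubic coefficient I would apply the complementary projection $Q := I-\avg{f\,\cdot\,}f$ to the full equation for $\tilde u$. Because $PB=BP=0$ with $P:=I-Q$, one obtains $Br=Q[\text{RHS}]$ restricted to $Q\1L^2$; invertibility of $(1-F)|_{Q L^2}$ with norm $\sim 1$ (Lemma~\ref{lmm:operator F}) then gives
\[
r \;=\; \Theta^2\,(1-F)^{-1}|_{QL^2}\,Q(\epsilon f^2) \,+\, \omega\,(1-F)^{-1}|_{QL^2}\,Q(|m|) \,+\, \ord\!\pb{|\Theta|^3+|\omega||\Theta|+\omega^2}.
\]
Plugging the $\Theta^2$-piece of $r$ back into $\Theta\,\avg{f^2\epsilon\,(1+F)r}$ and using that $(1+F)(1-F)^{-1}=(1+F)/(1-F)$ on $Q L^2$, self-adjointness of $Q$ converts this contribution into $\Theta^3\,\mathcal{D}_\tau(f^2\epsilon) = \psi\,\Theta^3$ via \eqref{definition of sigma and psi}--\eqref{definition of D}. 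Collecting the remaining terms into $e(\omega;\tau)$ yields the cubic \eqref{cubic equation}.

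The last two ingredients---the error bounds on $e$ and the stability $\psi+\sigma^2\sim 1$---are where I expect the main difficulty. The crude bound $|e|\lesssim|\omega||\Theta|+\omega^2$ follows from $\norm{r}\lesssim|\omega|+|\Theta|^2$ together with $|m|,f\sim 1$ and the explicit form of the subleading contributions. For $|\im e|\lesssim|\omega|\im\Theta$ the key point is that $m(\tau)$, $f(\tau)$, $\sigma$, $\psi$ and $\avg{f|m|}$ are all real, so any imaginary part on the right-hand side of the cubic is carried by $\tilde u$ alone; the uniform comparability $\im m_x\sim\im m_y$ from Proposition~\ref{prp:uniform bound} then promotes the crude estimate $|\omega|\norm{\im\tilde u}$ into $|\omega|\im\Theta$, because $\im\tilde u_x\sim\im\Theta$ pointwise in $x$. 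For stability, non-negativity of $\mathcal{D}_\tau$ on $Q L^2$ (since $(1+F)/(1-F)\ge 0$ on the complement of the Perron subspace of $F$) ensures $\psi\ge 0$; if both $\psi$ and $\sigma$ were arbitrarily small, then $Q(f^2\epsilon)$ would be forced to vanish in the $\mathcal{D}_\tau$-norm and $\avg{f\cdot f^2\epsilon}=\sigma\approx 0$, together forcing $f^2\epsilon\approx 0$ in $L^2$, contradicting $f\sim 1$ and $|\epsilon|=1$. Turning this dichotomy into the quantitative bound $\psi+\sigma^2\sim 1$ via the uniform spectral gap of $F$ will be the principal technical hurdle.
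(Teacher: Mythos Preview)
Your overall strategy is the same as the paper's: take the difference of the QVE at $\tau+\omega$ and $\tau$, rescale by $|m(\tau)|$, decompose along the Perron eigendirection $f$ of $F(\tau)$, solve for the orthogonal component $r=Q\tilde u$ via the spectral gap, and feed the leading part of $r$ back into the $f$-projection to produce the cubic with coefficients $\sigma=\avg{f^3\epsilon}$ and $\psi=\mathcal{D}_\tau(f^2\epsilon)$. Your stability argument is also essentially the paper's: $\psi\ge \tfrac12\,\mathrm{Gap}(F)\,\norm{Q(f^2\epsilon)}_2^2$ and $\sigma^2=\norm{(1-Q)(f^2\epsilon)}_2^2$, so $\psi+\sigma^2\gtrsim\norm{f^2\epsilon}_2^2\ge 1$. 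Two omissions to be aware of: (i) you never invoke the a~priori H\"older bound $|\Theta|+\norm{r}\lesssim|\omega|^{1/3}$ from Corollary~\ref{crl:Holder-regularity of the solution}, which is what lets you absorb the $|\Theta|\norm{r}$ and $\norm{r}^2$ terms in the $r$-equation and later convert $|\Theta|^3\lesssim|\omega|$; (ii) a small algebraic slip: the coefficient of $\tilde u\,F\tilde u$ in your $B\tilde u$ equation should be $|m|/m=\epsilon$, not $1/m$.

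There is, however, a genuine gap in your argument for $|\im e|\lesssim|\omega|\im\Theta$. Your claim that the ``crude estimate'' for $|\im e|$ is $|\omega|\norm{\im\tilde u}$ is false: the error $e$ contains genuinely nonlinear pieces in $\tilde u$, in particular $\avg{f\epsilon\,r\,Fr}$ and the $\re\Theta$-part of $\Theta\avg{f^2\epsilon(1+F)r_1}$, whose imaginary parts are controlled by $\norm{r}\norm{\im r}$ and $|\Theta|\norm{\im r_1}$ respectively. The comparability $\im m_x\sim\im m_y$ only gives $\norm{\im r}\lesssim\im\Theta$, so you end up with contributions of size $(|\Theta|^2+|\omega|)\im\Theta$ and $|\Theta|\im\Theta$, which for $|\Theta|\sim|\omega|^{1/3}$ are $|\omega|^{2/3}\im\Theta$---too large. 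The paper closes this by going back to the $Qu$-equation and taking its imaginary part: since all structural coefficients are real one obtains, after one absorption using $|\Theta|+\norm{Qu}\lesssim|\omega|^{1/3}$, the \emph{finer} bound $\norm{\im Qu}\lesssim(|\Theta|+|\omega|)\im\Theta$. With this in hand, $\norm{r}\norm{\im r}\lesssim(|\Theta|^2+|\omega|)(|\Theta|+|\omega|)\im\Theta\lesssim|\omega|\im\Theta$ and similarly for the other term. Your comparability observation is correct and even gives a clean starting point, but it does not replace this bootstrapping step.
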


\begin{proof}[Proof of Proposition \ref{prp:Cubic equation}]
We fix $\tau \in \partial \mathfrak{S}$. In particular, $\im m(\tau)=0$. We will often neglect the dependence of various quantities on $\tau$ in our notation, e.g.  $m=m(\tau)$, $Q=Q(\tau)$, $\Theta(\omega)=\Theta(\omega;\tau)$, etc. We start the proof by showing that
\begin{equation}
\label{spectral radius 1 at singularity}
\norm{F(\tau)}_2\,=\, 1\,.
\end{equation}
Since $\tau \mapsto \norm{F(\tau)}_2$ is a continuous function, it suffices to show $\norm{F(\widetilde \tau)}_2=1$ for $\widetilde \tau \in \mathfrak{S}$. The relation \eqref{relation between rho and alpha} extends to $\widetilde \tau$ since the denominator on the right hand side is positive when evaluated at that point. In particular, the right hand side of this relation equals $1$ at $\widetilde \tau$ since $\im m(\widetilde \tau)>0$. 

We introduce the scaled difference between the solution of the QVE evaluated at $\tau+\omega$ and at $\tau$,
\[
u(\omega)\,:=\, \frac{m(\tau+\omega)-m(\tau)}{\abs{m(\tau)}}\,.
\]
Using that $m(\tau)=( \mathrm{sign}\2 m(\tau))\abs{m(\tau)}$, the definition of the operator $F$ from \eqref{definition of F} and the QVE with spectral parameters $z=\tau$ and $z=\tau+\omega$, it is easy to verify that $u$ satisfies the quadratic equation 
\begin{equation}
\label{quadratic for u}
(1-F)u(\omega) \,=\, p\1u(\omega)Fu(\omega) +\omega \abs{m} + \omega p\1\abs{m}u(\omega)\,,\quad p\,:=\, \mathrm{sign}\2 m
\,.
\end{equation}

We treat the direction $f$, which constitutes the kernel of $1-F$, separately. Recall that $Q=1-\avg{f \,\cdot\,}f$ is the projection onto the orthogonal complement of $f$. We decompose $u$ according to
\begin{equation}
\label{u decomposition}
u(\omega)\,=\,\Theta(\omega)\1 f + Q u(\omega)\,,\qquad \Theta(\omega)\,=\, \avg{f\1u(\omega)}\,.
\end{equation}

The H\"older continuity of $m$ from Corollary \ref{crl:Holder-regularity of the solution} leads to the a priori estimate
\begin{equation}
\label{a priori Holder estimate}
\abs{\Theta(\omega)}+\norm{Qu(\omega)}\,\lesssim\, \abs{\omega}^{1/3}.
\end{equation}
We will now derive an improved bound for $Qu$. 
To this end we insert the decomposition \eqref{u decomposition} into \eqref{quadratic for u}, use the eigenvalue equation $Ff=f$ and project with $Q$ on both sides. A short calculation shows that 
\begin{equation}
\label{Qu equation}
(1-F)Qu(\omega)
 \,=\,\Theta(\omega)^2\1Q[p\1f^2] +e_1(\omega)
\,,
\end{equation}
where the error function $e_{1}(\omega) \in \Bounded(\xSpace,\C)$ satisfies the two bounds
\begin{subequations}
\begin{align}
\label{e1 bound1}
\norm{e_1(\omega)}\,
&\lesssim\,
\abs{\omega}^{1/3}\norm{Qu(\omega)} + \abs{\omega} 
\,,
\\
\label{e1 bound2}
\norm{\im e_{1}(\omega)}\,
&\lesssim\,
\abs{\omega}^{1/3}\norm{\im Qu(\omega)}+ (\1\norm{Qu(\omega)}+\abs{\omega}\1)\2 \im \Theta(\omega)\,.
\end{align}
\end{subequations}
Here we have used \eqref{a priori Holder estimate}. 

Inverting $1-F$ on the orthogonal complement of $f$ in \eqref{Qu equation} and using \eqref{e1 bound1} and \eqref{e1 bound2}, respectively, yields the improved bounds
\begin{subequations}
\begin{align}
\label{Qu bound1}
\norm{Qu(\omega)}\,&\lesssim\, \abs{\Theta(\omega)}^2+ \abs{\omega}\,,
\\
\label{Qu bound2}
\norm{\im Qu(\omega)}\,&\lesssim\, (\1\abs{\Theta(\omega)}+\abs{\omega}\1)\1\im \Theta(\omega) \,.
\end{align}
\end{subequations}
For both inequalities, \eqref{e1 bound1} and \eqref{e1 bound2}, we have used that $1-F$ is invertible on its image with bounded inverse,
\begin{equation}
\label{1-F bounded inverse}
\norm{(1-F)^{-1}Q\1}\,\lesssim\; 1+\norm{(1-F)^{-1}Q\1}_2
\,\lesssim\; 1\,.
\end{equation}
The first inequality in \eqref{1-F bounded inverse} follows from the same argument as the second inequality in \eqref{infinity to infinity from L2 to L2}, while the second inequality is a consequence of the uniform spectral gap estimate  \eqref{F spectral gap} for $F$. Additionally, for \eqref{Qu bound1} we have used \eqref{e1 bound1} and for \eqref{Qu bound2} we have used \eqref{e1 bound2} in conjunction with \eqref{Qu bound1}. In particular, the improved bound \eqref{Qu bound1} on the norm of $Qu$ together with \eqref{u decomposition} shows the validity of \eqref{leading order of change}.

Now we will derive the cubic equation \eqref{cubic equation} for $\Theta$. We start by plugging the decomposition \eqref{u decomposition} into \eqref{quadratic for u}, using $Ff=f$ and projecting on both sides with the linear functional $w \mapsto \avg{f\1w}$ onto the $f$-direction,
\begin{equation}
\label{projection onto f}
0 
 \,=\, \Theta(\omega)^2 \avg{pf^3} + \Theta(\omega) \avg{p\1 f^2 (1+F)Qu(\omega)} + \omega \avg{f\abs{m}} + e_2(\omega)
\,.
\end{equation}
Here, the error term $e_{2}$ satisfies 
\begin{subequations}
\begin{align}
\label{e2 bound1}
\abs{e_2(\omega)}\,&\lesssim\, \norm{Qu(\omega)}^2 +\abs{\omega} \abs{\Theta(\omega)} + \abs{\omega} \norm{Qu(\omega)}\,,
\\
\label{e2 bound2}
\abs{\im e_2(\omega)}\,&\lesssim\,  \norm{Qu(\omega)} \norm{\im Qu(\omega)} +\abs{\omega} \abs{\im \Theta(\omega)} + \abs{\omega}\norm{\im Qu(\omega)}\,.
\end{align}
\end{subequations}
Solving for $ Qu $ in \eqref{Qu equation} and plugging the resulting expression into \eqref{projection onto f}  yields \eqref{cubic equation} with the coefficients $\sigma$ and $\psi$ defined as in \eqref{definition of sigma and psi} and the error term 
$e(\omega)=e_\tau(\omega)$ given by
\begin{equation}
\label{definition of e}
e(\omega)\,:=\, -\1e_2(\omega)\,-\, \Theta(\omega) \avgb{p\1 f^2 (1+F)(1-F)^{-1}e_1(\omega)} \,.
\end{equation}

It remains to verify the error bounds \eqref{bounds on e} and show the stability of the cubic \eqref{stability of cubic}. We start with the error bounds by estimating the absolute value,
\[
\abs{e(\omega)}\,\lesssim\, \abs{e_2(\omega)} + \abs{\Theta(\omega)}\norm{e_1(\omega)}\,\lesssim\,\abs{\omega}
\abs{\Theta(\omega)}+ \abs{\omega}^2,
\]
where in the second inequality we used \eqref{e2 bound1}, \eqref{e1 bound1}, \eqref{Qu bound1} and \eqref{a priori Holder estimate} in that order. 

Now we estimate the imaginary part of $e(\omega)$. From its definition \eqref{definition of e} we read off the first inequality in 
\[
\abs{\1\im e(\omega)}\,\lesssim\, \abs{\1\im e_2(\omega)} + \norm{e_1(\omega)}\2\im \Theta(\omega) + \abs{\Theta(\omega)}\norm{\1\im e_1(\omega)}
\,\lesssim\, 
\abs{\omega} \im \Theta(\omega)
\,.
\]
For the second estimate we used \eqref{e2 bound2}, \eqref{e1 bound1}, \eqref{e1 bound2}, \eqref{Qu bound1}, \eqref{Qu bound2} and \eqref{a priori Holder estimate} one after the other.

Finally, we show  \eqref{stability of cubic}. Only the lower bound requires a proof. First we observe that, by Definition \ref{def:Spectral gap} of the spectral gap, and by $\norm{F}_2 \leq 1$, the quadratic form $\mathcal{D}$, defined in \eqref{definition of D}, satisfies the lower bound
\[
\mathcal{D}(w)\,\geq\,\frac{ \mathrm{Gap}(F)}{2}\,\norm{Qw}_2^2\,,\qquad w \in \Bounded(\xSpace,\C)\,.
\]
With the choice $w:=pf^2$ we conclude that
\[
\psi+\sigma^2\,=\, \mathcal{D}(w)+\avg{f\1w}^2
\,\geq\, \frac{\mathrm{Gap}(F)}{2}\norm{Qw}_2^2 + \norm{(1-Q)w}_2^2\,\gtrsim\, \norm{w}^2_2\,\geq\, 1\,,
\]
where we used \eqref{F spectral gap} in the second to last estimate and the normalization of $f$ together with Jensen's inequality in the last step.
This finishes the proof of Proposition \ref{prp:Cubic equation}.
\end{proof}

We are now ready to prove Theorem \ref{thr:Regularity and singularities of the generating density}. Proposition \ref{prp:Cubic equation} reveals that the difference $m(\tau+\omega)-m(\tau)$ at a singular point $\tau \in \partial \mathfrak{S}$ is mainly determined by $\Theta(\omega)$. The cubic equation \eqref{cubic equation} for this quantity is stable in the sense that the second and third order coefficient cannot vanish at the same time (cf. \eqref{stability of cubic}). We therefore expect $\Theta$ to only allow for algebraic singularities of order not higher than three. This expectation is supported by the $1/3$-H\"older regularity, established in Corollary \ref{crl:Holder-regularity of the solution}. In fact, since all solutions of \eqref{stability of cubic} can be found to leading order explicitly, most of the proof of Theorem \ref{thr:Regularity and singularities of the generating density} is concerned with selecting the correct solution branch of \eqref{cubic equation}. 

\begin{proof}[Proof of Theorem \ref{thr:Regularity and singularities of the generating density}]
Taking into account the statements of Propositions \ref{prp:uniform bound} and \ref{prp:regularity}, it remains to show the behavior \eqref{cusp singularity} and \eqref{edge singularity} as the value of the generating density approaches zero and  that $\mathfrak{S}$ consists of only finitely many intervals. The latter will be shown at the very end of the proof.

Let us fix $\tau \in \partial \mathfrak{S}$. We start by considering the case where $\sigma(\tau)=0$, with $\sigma$ given by \eqref{definition of sigma and psi}.  Within the proof we will see that this characterizes the cusp singularities.

\medskip
\noindent{\scshape Cusp:} Let $\sigma(\tau)=0$. We apply Proposition \ref{prp:Cubic equation}. The cubic equation \eqref{cubic equation} takes the simplified form
\begin{equation}
\label{cusp equation}
\psi\2\Theta(\omega)^3+\avg{|m|f\1}\1\omega
\,=\,
e(\omega)\,,
\end{equation}
where $e(\omega)$ satisfies \eqref{bounds on e} and $\psi \sim 1$ according to \eqref{stability of cubic}. 
Since $ \abs{m},f \sim 1  $ and $ m(z) $ is uniformly $ 1/3$-H\"older continuous in $ z $ (cf. \eqref{Holder continuity of m}), the function $ \Theta(\omega) $ inherits the regularity of $ m $ by its definition in \eqref{definition of Theta}.
 In particular, $\abs{\Theta(\omega)}\lesssim \abs{\omega}^{1/3}$ and \eqref{bound on |e|} implies
\[
e(\omega)\,=\, \ord(\abs{\omega}^{4/3})\,.
\]

A simple perturbative calculation of the solution of \eqref{cusp equation} shows that $\Theta$ has the form 
\begin{equation}
\label{cusp solution for Theta}
\Theta(\omega)
\;=\;
\widehat \Theta_{p}(\omega)\mathbbm{1}(\omega < 0)\2+\,\widehat\Theta_{q}(\omega)\mathbbm{1}(\omega \geq 0)\,+\,
\ord(\abs{\omega}^{2/3})\,,
\end{equation}
where $\widehat \Theta_\#$ is the solution of \eqref{cusp equation} if the error term on the right hand side is set equal to zero, i.e., 
\begin{equation}
\label{ideal cusp solution}
\widehat \Theta_\#(\omega)\,:=\, -\2\zeta_\# \;  (\mathrm{sign}\, \omega)\pbb{\!\frac{\avg{|m|f}}{\psi}\,\abs{\omega}}^{\!1/3},\qquad \#=0,\pm\,.
\end{equation}
Here, $\zeta_0=1$ and $\zeta_\pm=(-1\pm\ii \sqrt{3})/2$ are the cubic roots of unity. We will now show that $p=+$ and $q=-$ for the indices in \eqref{cusp solution for Theta}. 

 First we rule out the choice $p=-$, as well as the choice $q=+$. Indeed, in both cases the imaginary part of the corresponding $\Theta(\omega)$ would have negative values for $\omega < 0$ and $\omega >0$, respectively. This is contradictory to the definition of $\Theta$ in \eqref{definition of Theta} and to the fact that $\im m(\tau)=0$ and $\im m(\tau+\omega)\geq 0$.

Suppose now that $p=0$ or $q=0$. Then from the explicit form \eqref{ideal cusp solution} of the leading order term to $\Theta$ we read off that there is a positive constant $c_1\sim 1$ such that
\begin{equation}
\label{cusp bounds on Theta}
\abs{\1\re \Theta(\omega)} \,\sim\, \abs{\omega}^{1/3},\qquad \im \Theta(\omega)\,\lesssim\, \abs{\omega}^{2/3},
\end{equation}
on the corresponding side of $\omega=0$, i.e., for $\omega \in [-c_1,0\1]$ or $\omega \in  [\10,c_1]$, respectively. Taking the imaginary part on both sides of the cubic \eqref{cusp equation} this implies that
\begin{equation}
\label{imaginary part gap at cusp}
\abs{\omega}^{2/3}\im \Theta(\omega)\,\lesssim\, (\im \Theta(\omega))^3+\abs{\omega}\im \Theta(\omega)
\,\lesssim\, \abs{\omega}\im \Theta(\omega)
\,,
\end{equation}
where we used the estimate \eqref{bound on |Im e|} on the error term $e(\omega)$ to get the first estimate.
For the second inequality we have  used \eqref{cusp bounds on Theta} to bound $\im \Theta$.
From \eqref{imaginary part gap at cusp} we conclude that there is a positive constant $c_2\sim 1$ such that
\begin{equation}
\im \Theta(\omega)\,=\, 0 \qquad \text{ for }\quad 
\begin{cases}
\omega \in [-c_2,0\1] & \text{ if } \; p\2=\20\,, 
\\
\omega \in [\10,c_2] & \text{ if } \; q\2=\20\,.
\end{cases}
\end{equation}

In particular, the generating density vanishes in either case on the corresponding interval by the definition of $\Theta$ and $\im m(\tau)=0$. 
On the other hand, from the formula \eqref{ideal cusp solution} for the leading order  term $\widehat \Theta_0$ to $\Theta$ in  \eqref{cusp solution for Theta} we see that the real valued $ \Theta$ is decreasing somewhere inside these gaps in the support of the generating density, because
\[
\Theta(\omega)
\,=\,  
 \widehat \Theta_0(\omega)\1(\11+\ord(\abs{\omega}^{1/3}))
\,=\, -\2C_1\2\mathrm{sign}(\omega)\1\abs{\omega}^{1/3}(\11+\ord(\abs{\omega}^{1/3}))\,,
\]
with a positive constant $C_1>0$.
Using \eqref{leading order of change} to write
\[
m(\tau+\omega)-m(\tau) \,=\,|\1m(\tau)|\2f(\tau)\2 \Theta(\omega;\tau)\1(\11+ \ord(\abs{\omega}^{1/3}))
\,,
\]
we see that correspondingly $m $  would have to decrease as well.
This contradicts the Stieltjes transform representation \eqref{m as stieltjes transform} of $m$, because the Stieltjes transform of a positive measure is monotonically increasing away from the support of that measure, when evaluated on the real line. 
Having ruled out the choices $ p=- $ and $ q=+$ earlier, we conclude that $p=+$ and $q=-$.

By \eqref{leading order of change} the function $\Theta$ describes the leading order of the difference between $m(\tau)$ to $m(\tau+\omega)$. Considering only the imaginary part and using the identity $\im m(\tau+\omega)= \pi \2v(\tau+\omega)$ we find
\begin{equation}
\label{v at the cusp}
v(\tau+\omega)\,=\, |m(\tau)|\2f(\tau) \im \Theta(\omega;\tau)+ \ord(\abs{\omega}^{2/3})
\,.
\end{equation}
From this \eqref{cusp singularity} follows if we define $c \in \Bounded(\xSpace, \R^+)$ by
\[
c_x\,:=\,\frac{\!\sqrt{3\2}}{2\pi}\pbb{\frac{\avg{\1|\1m(\tau)|\2f(\tau)\1}}{\psi(\tau)}}^{1/3}\! |\1m_x(\tau)|\2 f_x(\tau) \,.
\]
The behavior \eqref{v at the cusp} of the generating density around $\tau$ with $\sigma(\tau)=0$ shows that such a $\tau \in \partial\mathfrak{S}$ belongs to the intersection of the closure of two connected component of $\mathfrak{S}$, i.e., $\tau$ is a cusp in the sense of the statement of Theorem \ref{thr:Regularity and singularities of the generating density}. It also verifies \eqref{cusp singularity} at such an expansion point $\tau$.

\medskip
\noindent{\scshape Edge:}  Let $\sigma=\sigma(\tau)\neq 0$. We will show that $\tau$ is not a cusp. More precisely, we will show that with $\theta:= \mathrm{sign} \2\sigma$ the generating density satisfies
\begin{equation}
\label{v behavior at the edge}
v(\tau+ \theta\omega)\,=\, 
\begin{cases}
c\1 \abs{\omega}^{1/2}+\ord(\abs{\omega/\sigma^2}) 	& \text{ if }\; \theta\omega \in  [\10,c_3 \abs{\sigma}^3]\,,
\\
0								& \text{ if }\; \theta\omega \in  [-c_3 \abs{\sigma}^3,\10\1]\,,
\end{cases}
\end{equation}
for some $c \in \Bounded(\xSpace, \R^+)$ and some constant $c_3\sim 1$. 

We will again make use of Proposition \ref{prp:Cubic equation}. We write the corresponding cubic equation \eqref{cubic equation} in the form
\begin{equation}
\label{quadratic at the edge}
\sigma \1\Theta(\omega)^2 +\avg{|m|f}\1\omega
\,=\,
 e_3(\omega)\,,
\end{equation}
where the cubic term in $\Theta$ is considered part of the error,
\[
 e_3(\omega)\,:=\, e(\omega)- \psi\1\Theta(\omega)^3.
\]
With the bound \eqref{bounds on e} on $\abs{e(\omega)}$ and the a priori estimate $\abs{\Theta(\omega)}\lesssim \abs{\omega}^{1/3}$ (cf. \eqref{a priori Holder estimate}) we see from \eqref{quadratic at the edge} that $\Theta$ satisfies for some positive constant $c_4 \sim 1$ the bound
\begin{equation*}
\abs{\Theta(\omega)}\,\lesssim\, \abs{\omega/\sigma}^{1/2},\qquad \omega \in [-c_4\abs{\sigma}^3,c_4\abs{\sigma}^3]\,.
\end{equation*}
In particular, $\abs{ e_3(\omega)}\lesssim \abs{\omega/\sigma}^{3/2}$. We conclude that as a continuous solution of \eqref{quadratic at the edge} the function $\Theta$ has the form
\begin{equation}
\label{edge solution for Theta}
\Theta(\omega)\,=\, \widehat \Theta_{p}(\omega)\mathbbm{1}(\theta \omega < 0)+\widehat\Theta_{q}(\omega)\mathbbm{1}(\theta\omega \geq 0)+\ord(\abs{\omega/\sigma^2})\,,
\end{equation}
with $p,q \in \{+,-\}$.
Here, $\widehat \Theta_\#$ denotes the solution of \eqref{quadratic at the edge} with vanishing error term,
\begin{equation}
\label{ideal solution at the edge}
\widehat \Theta_\pm  (\omega)\,:=\, 
\pbb{\!\frac{\avg{\1|m|\1f\1}}{\abs{\sigma}}\!}^{\!1/2}\msp{-7}
\times
\begin{cases}
\pm\ii \abs{\omega}^{1/2},  &\;\theta \omega\geq 0\,,
\\
\pm\abs{\omega}^{1/2}, &\; \theta \omega<0\,.
\end{cases}
\end{equation}

First we notice that for the indices in \eqref{edge solution for Theta} the choice $q=-$ is impossible, because it violates $\im \Theta\geq 0$, as can be seen from
\[
\im \Theta(\omega)\,=\, \im \widehat \Theta_b(\omega)\pb{\11+\ord\pb{\abs{\omega/\sigma^3}^{1/2}}}\,,
\]
for $\theta \omega \geq0$.
Thus, the behavior \eqref{v behavior at the edge} of the generating density is proven for $\theta \omega \geq 0$ by taking the imaginary part of \eqref{leading order of change} and defining
\[
c_x\,:=\, \frac{1}{\pi}\pbb{\frac{\avg{|m(\tau)|f(\tau)}}{\abs{\sigma(\tau)}}}^{1/2}\! |\1m_x(\tau)|\2f_x(\tau) \,.
\]

Now we show that there is a gap in the support of the generating density for $\theta\omega \in  [-c_3 \abs{\sigma}^3,0]$, i.e., we show that $\im \Theta(\omega)$ vanishes for these values of $\omega$.  The explicit form of $ \widehat \Theta_\pm$ in \eqref{ideal solution at the edge} and \eqref{edge solution for Theta} reveal that 
\begin{equation}
\label{bounds on Theta at the edge}
\abs{\1\re {\Theta(\omega)}} \,\sim\, \abs{\omega/\sigma}^{1/2},\qquad \im \Theta(\omega)\,\lesssim\, \abs{\omega/\sigma^2}\,,
\end{equation}
as long as $\theta \omega\in [-c_5\abs{\sigma}^3,0] $ for some constant $c_5 \sim 1$.
Knowing the size of $\re \Theta$ from \eqref{bounds on Theta at the edge} we take the imaginary part of the quadratic equation \eqref{quadratic at the edge} and find
\begin{equation}
\label{bound on im Theta at the edge}
\abs{\sigma\omega}^{1/2} \im \Theta\,\lesssim\, \abs{\1\im (\Theta^3)}+\abs{\omega}\im \Theta
\,\lesssim\,\abs{\omega/\sigma}\im \Theta\,.
\end{equation}
Here, \eqref{bounds on e} was used in the first, and \eqref{bounds on Theta at the edge} as well as $\abs{\omega}\lesssim \abs{\sigma}^3 $ in the second inequality. 
 From \eqref{bound on im Theta at the edge} we conclude that $\im \Theta(\omega)=0$ for $\theta \omega\in [-c_3\abs{\sigma}^3,0] $ for some positive constant $c_3 \sim 1$. 

This finishes the proof of \eqref{v behavior at the edge}. In particular, we see that $\tau$ is not a cusp in the sense of the statement of Theorem \ref{thr:Regularity and singularities of the generating density}. We also conclude that \eqref{edge singularity} holds true at such an expansion point $\tau$, apart from the fact that the function $\abs{\omega/\sigma}$  inside the $\ord$-notation in \eqref{v behavior at the edge} still depends on the uncontrolled quantity $\abs{\sigma}$. This will be remedied by the fact that there can only be finitely many such points, because $\partial \mathfrak{S}$ is a finite set, as we will show below. We may then estimate the quantity  $\abs{\sigma}$ inside the error terms by the constant $\min\{\abs{\sigma(\tau)}: \tau \in \partial \mathfrak{S},\, \sigma(\tau)\neq 0\}>0$. 

In order to show that $ \partial \mathfrak{S} $ is finite we derive a contradiction by assuming the contrary. 
Since $ \mathfrak{S} $ is bounded (Proposition \ref{prp:Existence and uniqueness}) it follows that the closed infinite  set $ \partial \mathfrak{S} $ contains an accumulation point $\tau_\ast \in \partial \mathfrak{S}$.
If $ \sigma(\tau_\ast)=0 $, then  $ v(\tau_\ast +\omega) > 0 $ for every $ \abs{\omega} \in (0,\eps) $, and some $ \eps > 0$, by the already proven expansion \eqref{cusp singularity} for such points  $ \tau_\ast$. This contradicts $ \tau_\ast \in \partial  \mathfrak{S} $, because the generating density vanishes at every point of $\partial\mathfrak{S} $.
Thus, we have $ \sigma(\tau_\ast) \neq 0 $. 
Using the expansion \eqref{v behavior at the edge} at $ \tau = \tau_\ast $ we see that $ \tau_\ast $ is isolated from other elements of $\partial\mathfrak{S} $ by a distance $ c_3\1\abs{\sigma(\tau_\ast)}^3 > 0 $.
This contradicts $\tau_\ast $ being an accumulation point of $\partial\mathfrak{S} $ as well. Hence, we arrive at the conclusion that $ \partial \mathfrak{S} $ is finite.
This finishes the proof of our main result, Theorem~\ref{thr:Regularity and singularities of the generating density}.
\end{proof}

\begin{proof}[Proof of Theorem \ref{thr:Single interval support}] Suppose $\tau_0 \in \partial \mathfrak{S}$. In particular, $m(\tau_0)$ is real. We will first show that under the assumption \eqref{connected rows} on $S $ and $ a$ the solution $ m $ has  a definite sign at $ \tau_0 $. More precisely, we show that there exists $ \theta= \theta(\tau_0) \in \{\pm 1\} $ such that
\begin{equation}
\label{common sign}
\mathrm{sign}\,m_x(\tau_0) = \theta
\,,\qquad
\forall\1x \in \xSpace
\,.
\end{equation}
For the proof, we use the QVE to obtain
\begin{equation}
\label{m difference}
m_x(\tau)-m_y(\tau) 
\,=\, m_x(\tau)\2m_y(\tau)\2\pb{\,a_x-a_y+\avg{\1m\,(S_x-S_y)}\,}
\,,
\end{equation}
for every $x,y \in \xSpace$ and $\tau \in \R$.
Suppose now that \eqref{common sign} is not true, so that the set
\[
\mathfrak{A}\,:=\, \bigl\{x \in \xSpace: \; m_x(\tau_0)>0\bigr\}
\,.
\]
is not trivial. 
Choosing $ x \in \mathfrak{A}$, $ y\notin\mathfrak{A} $ and $ \tau = \tau_0 $ in \eqref{m difference} yields
\[
1 \,\lesssim\, m_x(\tau_0)-m_y(\tau_0)\,\lesssim\,\abs{a_x-a_y}+ \avg{\1\abs{S_x-S_y}\1}
\,,
\] 
where in the first estimate we used the lower bound $ \abs{m} \gtrsim 1 $ and in the second estimate the upper bound $ \abs{m} \lesssim 1 $ from \eqref{uniform bound}. 
Taking the infimum over $x \in \mathfrak{A} $ and $y \not \in \mathfrak{A} $ contradicts the assumption \eqref{connected rows}. 
Hence, we conclude that either $\mathfrak{A}=\emptyset$ or $\mathfrak{A}=\xSpace $, which is equivalent to \eqref{common sign}.

We will now show that $\tau_0$ is either the very right or the very left edge of $ \mathfrak{S}$, i.e., we prove that either the generating density vanishes on $[\tau_0, \infty)$ or on $(-\infty,\tau_0]$. Thus, $\partial \mathfrak{S}$ consists of only two points and $ \mathfrak{S}$ is a single interval.  

First we rule out the possibility that $\tau_0 $ is a cusp. 
Using \eqref{common sign} in the definition \eqref{definition of sigma and psi} of $ \sigma $ we conclude that $ \sigma(\tau_0) \neq 0 $ and $ \mathrm{sign}\, \sigma(\tau_0) = \theta $.
From \eqref{v behavior at the edge}, in the proof of Theorem \ref{thr:Regularity and singularities of the generating density}, we see that $ \tau_0 $ is not a cusp, and that there is a non-trivial connected component $ I $ of $ \R \backslash \mathfrak{S} $ containing $ \tau_0 $. The expansion \eqref{v behavior at the edge} also implies that  $ I $ continues in the direction $ -\theta $ from $ \tau_0 $. From here on we restrict the discussion to $\theta=-1$. The case $\theta=+1$ is treated analogously. 

We will now finish the proof by showing that $I=[\tau_0,\infty)$.  
By the continuity of $m(\tau)$ in $\tau$ and the lower bound $\abs{m}\gtrsim 1$ from \eqref{uniform bound} the sign of $m$ stays constant on the interval $I\cap[\tau_0,2\kappa]$. For $\tau>2\kappa$ we have $m(\tau)<0$ by \eqref{m as stieltjes transform}. Therefore, \eqref{common sign} extends to
\begin{equation}
\label{sign m = theta on I} 
\mathrm{sign}\,m(\tau) = -1\,,\qquad
\forall\,\tau \in I\,.
\end{equation}
\item
All the components $ \tau \mapsto m_x(\tau) $ are strictly increasing functions on $ I $.
This is a consequence of $ m_x $ being the Stieltjes transform (cf. \eqref{m as stieltjes transform}) of the non-negative density $ v_x $ which vanishes on $I$. 
Combining this with \eqref{sign m = theta on I} we deduce that  $ \tau \mapsto \abs{\1m_x(\tau)} $ is strictly decreasing for all $x \in \xSpace$. 
Decreasing $\abs{m}$ also decreases the spectral radius of the operator $F$, defined in  \eqref{definition of F}.
In particular, $ \norm{F(\tau)}_2 $ decreases strictly as $ \tau \in I $ is moved away from $ \tau_0 $. In particular,
\begin{equation}
\label{2-norm of F at tau0 and on I}
\norm{F(\tau_0)}_2 \,>\, \norm{F(\tau)}_2\,, 
\qquad\forall\,\tau \in I \backslash \{\tau_0\}
\,.
\end{equation}
Since from \eqref{spectral radius 1 at singularity} we know that $\norm{F(\tau)}_2 = 1 $ for any $\tau \in \partial \mathfrak{S}$, \eqref{2-norm of F at tau0 and on I} implies that $ I $ does not contain an element of $ \partial \mathfrak{S} $ other than $ \tau_0 $.
 This completes the proof of Theorem \ref{thr:Single interval support}.
\end{proof}

%%      ---------------------------------------------------------------------
%%      ------------------------- APPENDIX (OPTIONAL) -----------------------
%%      ---------------------------------------------------------------------
        
%%      If you have one appendix, uncomment the line \appendix and add
%%      a \section{ *** APPENDIX TITLE ***}. If you have more than
%%      one, uncomment the line \appendices and add a \section{ ***
%%      APPENDIX TITLE ***} command for each appendix title.

\appendix
%\appendices

\section{Existence and uniqueness}
\label{appendix:Existence and uniqueness}

Existence and uniqueness of \eqref{QVE} is established by interpreting the QVE as a fixed point equation for a holomorphic map in an appropriately chosen function space.  
The choice of the correct metric on this space follows naturally from the general theory by Earle and Hamilton \cite{EarleHamilton70}.  
The same line of reasoning has appeared before in a context close to ours in \cite{FHS2006,Helton2007-OSE,KLW2}.

\begin{proof}[Proof of Proposition \ref{prp:Existence and uniqueness}]
We will set up the fixed point problem on the set of functions defined on the domain
\[
\Cp_\eta\,:=\, \bigl\{z \in \Cp\,:\; \im z \2\geq\2 \eta\,,\; \abs{z}\2\leq\2 \eta^{-1}\bigr\}\,.
\]
More precisely, for any $\eta \in (0,\min\{1,\norm{a}^{-1}\})$ we consider the function space 
\begin{equation}
\label{definition of Beta}\!
\mathfrak{B}_{\msp{-1}\eta} 
:=
\biggl\{ \mathfrak{u}: \Cp_{\1\eta} \to \Bounded(\xSpace,\Cp) :\inf_{z \in \Cp_\eta}\!\im \mathfrak{u}(z)\geq \frac{\eta^3}{(2+\norm{S})^2},\,  \sup_{z\in \Cp_{\1\eta}}\norm{\mathfrak{u}(z)}\leq \frac{1}{\eta} \biggr\}
,
\end{equation}
equipped with the metric 
\[
d_\mathfrak{B}(\mathfrak{u},\mathfrak{w})\,:=\, \sup_{z \in \Cp_{\1\eta}}\sup_{x \in \xSpace}d_{\Cp}(\mathfrak{u}_x(z),\mathfrak{w}_x(z))\,, \qquad \mathfrak{u}, \mathfrak{w} \in \mathfrak{B}_{\msp{-1}\eta} \,,
\]
where $d_{\Cp}$ denotes the standard hyperbolic metric on $\Cp$. The metric function space $(\mathfrak{B}_{\msp{-1}\eta}, d_\mathfrak{B})$ is complete.  In this setting the QVE takes the form 
\begin{equation}
\label{QVE as fixed point equation}
\mathfrak{u}\,=\, \Phi( \mathfrak{u})\,,
\end{equation}
where the function $\Phi$ is defined as 
\begin{equation}
\label{definition of Phi}
\Phi( \mathfrak{u})(z)\,:=\, -\2\frac{1}{z+a+S\1\mathfrak{u}(z)}\,,\qquad \mathfrak{u} \in \mathfrak{B}_{\msp{-2}\eta}\,,\; z \in \Cp_{\1\eta}\,.
\end{equation}

We will now verify that $\Phi$ is well defined as a map from $ \mathfrak{B}_{\msp{-1}\eta}$ to itself. In fact, $\Phi$, defined as in \eqref{definition of Phi}, maps all functions $ \mathfrak{u}: \Cp \to \Bounded(\xSpace,\Cp)$ to functions with the upper bound
\begin{equation}
\label{trivial upper bound for Phi}
\abs{\1\Phi(\mathfrak{u})(z)}\,=\, \frac{1}{\abs{z+ a+S\1\mathfrak{u}(z)}}\,\leq\, \frac{1}{\im (z+ a+S\1\mathfrak{u}(z))}\,\leq\, \frac{1}{\im \1 z}\,,
\end{equation}
where we used that $S$ has a non-negative kernel and therefore $\im S\1\mathfrak{u}(z) \geq 0$. Taking the supremum over all $z \in \Cp_{\1\eta}$ in \eqref{trivial upper bound for Phi} reveals that $\sup_{z \in \Cp_\eta}\norm{\Phi(\mathfrak{u})(z)}\leq \eta^{-1}$, which is the upper bound in the definition \eqref{definition of Beta} of $\mathfrak{B}_{\msp{-1}\eta}$.  

On the other hand, for every function  $\mathfrak{u}: \Cp_{\1\eta} \to \Bounded(\xSpace,\Cp)$ that satisfies the upper bound $\sup_{z\in \Cp_{\1\eta}}\norm{\mathfrak{u}(z)}\2\leq\2 \eta^{-1}$, we find a lower bound on the imaginary part of $\Phi(\mathfrak{u})$,
\begin{equation}
\label{lower bound on im Phi}
\im \Phi(\mathfrak{u})(z)= \frac{\im (z+ a+S\1\mathfrak{u}(z))}{\abs{z+ a+S\1\mathfrak{u}(z)}^2}\geq 
\frac{\im z}{(\abs{z}+\norm{a}+\norm{S} \eta^{-1})^2}\geq \frac{\eta^3}{(2+\norm{S})^2}\,, 
\end{equation}
for every $z \in \Cp_{\1\eta}$. 
Thus, $\Phi:\mathfrak{B}_{\msp{-1}\eta} \to \mathfrak{B}_{\msp{-1}\eta}$ is well defined. 

The two computations in \eqref{trivial upper bound for Phi} and \eqref{lower bound on im Phi} also show that the restriction $m|_{\Cp_\eta}$ of any solution to the QVE automatically belongs to $\mathfrak{B}_{\msp{-1}\eta}$. In particular, showing existence and uniqueness of the solution $\mathfrak{u}$ to the fixed point equation \eqref{QVE as fixed point equation} on $\mathfrak{B}_{\msp{-1}\eta}$ for every positive $\eta<\min\{1,\norm{a}^{-1}\}$ is equivalent to showing existence and uniqueness of the solution $m: \Cp \to \Bounded(\xSpace,\Cp)$ to the QVE. 

We will now establish a certain contraction property of the map $\Phi$. This property is expressed in terms of the function
\begin{equation}
\label{definition of metric D}
D(\zeta,\omega)\,:=\,\frac{\abs{\1\zeta-\omega}^2}{(\im\,\zeta)(\im\,\omega)}\,,
\qquad  \zeta, \,\omega \in \Cp
\,,
\end{equation}
which is related to the standard hyperbolic metric $d_\Cp$ by
\begin{equation}
\label{D and hyperbolic metric}
D(\zeta,\omega)\,=\,2\2(\cosh d_\Cp(\zeta,\omega)-1)\,.
\end{equation}

\begin{lemma}[Contraction property of $\Phi$]
\label{lmm:Contraction property}
For any  $\mathfrak{u}, \mathfrak{w} \in \mathfrak{B}_{\msp{-1}\eta}$ the map $\Phi:\mathfrak{B}_{\msp{-1}\eta} \to \mathfrak{B}_{\msp{-1}\eta}$ has the contraction property
\begin{equation}
\label{contraction property}
\sup_{ z\in \Cp_{\eta}}\sup_{x \in \xSpace}D(\Phi( \mathfrak{u})_x(z),\Phi( \mathfrak{w})_x(z))\,\leq\, \bigg(1+\frac{\,\eta^2}{\norm{S}}\bigg)^{-2}\msp{-5}\sup_{ z\in \Cp_{\eta}}\sup_{x \in \xSpace}D(\mathfrak{u}_x(z),\mathfrak{w}_x(z))\,.
\end{equation}
In particular, the fixed point equation \eqref{QVE as fixed point equation} has a unique solution $ \mathfrak{u} \in \mathfrak{B}_{\msp{-1}\eta}$.
\end{lemma}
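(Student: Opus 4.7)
The plan is to exploit the fact that the Möbius map $\zeta \mapsto -1/\zeta$ is an isometry of the hyperbolic half-plane $(\Cp, d_\Cp)$ and therefore, via \eqref{D and hyperbolic metric}, preserves $D$: $D(-1/\zeta, -1/\omega) = D(\zeta,\omega)$ for all $\zeta, \omega \in \Cp$. Fixing $z \in \Cp_\eta$ and $x \in \xSpace$, and writing $\alpha := z + a_x + (S\mathfrak{u})_x(z)$ and $\beta := z + a_x + (S\mathfrak{w})_x(z)$, the left-hand side of \eqref{contraction property} becomes $D(\alpha, \beta)$, so the entire argument reduces to analyzing how $D$ transforms under the affine map $\mathfrak{u} \mapsto z + a + S\mathfrak{u}$.

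The first key step is a Cauchy--Schwarz estimate built around the factorization $|\mathfrak{u}_y - \mathfrak{w}_y| = D(\mathfrak{u}_y, \mathfrak{w}_y)^{1/2}\, (\im \mathfrak{u}_y\, \im \mathfrak{w}_y)^{1/2}$, which is just a rewriting of \eqref{definition of metric D}. Plugging this into $|\alpha - \beta| \leq \int s_{xy} |\mathfrak{u}_y - \mathfrak{w}_y|\, \xMeasure(\dd y)$ and splitting the integrand as $[s_{xy} \im\mathfrak{u}_y]^{1/2} \cdot [s_{xy} \im\mathfrak{w}_y\, D(\mathfrak{u}_y, \mathfrak{w}_y)]^{1/2}$, Cauchy--Schwarz yields
\[
|\alpha - \beta|^2 \,\leq\, (S \im \mathfrak{u})_x \cdot \bigl( S [\im \mathfrak{w}\, D(\mathfrak{u}, \mathfrak{w})] \bigr)_x \,\leq\, D^\ast(z)\cdot (S \im \mathfrak{u})_x\, (S \im \mathfrak{w})_x,
\]
where $D^\ast(z) := \sup_{y \in \xSpace} D(\mathfrak{u}_y(z), \mathfrak{w}_y(z))$. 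Using $\im \alpha = \im z + (S \im \mathfrak{u})_x$ and the analogous identity for $\beta$, dividing by $\im \alpha\, \im \beta$ gives
\[
D(\alpha, \beta) \,\leq\, D^\ast(z)\, \pa{1 - \frac{\im z}{\im \alpha}}\pa{1 - \frac{\im z}{\im \beta}}.
\]

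The second step extracts the quantitative gain from the defining bounds of $\mathfrak{B}_\eta$. The upper bound $\norm{\mathfrak{u}(z)} \leq 1/\eta$ forces $(S \im \mathfrak{u})_x \leq \norm{S}/\eta$ and hence $\im \alpha \leq \im z + \norm{S}/\eta$; combined with the lower bound $\im z \geq \eta$ built into $\Cp_\eta$, a short calculation gives
\[
1 - \frac{\im z}{\im \alpha} \,\leq\, \frac{\norm{S}}{\eta\, \im z + \norm{S}} \,\leq\, \frac{1}{1 + \eta^2/\norm{S}},
\]
and the same bound applies to the $\beta$-factor. Multiplying the two and taking the suprema over $x \in \xSpace$ and $z \in \Cp_\eta$ on both sides produces \eqref{contraction property}.

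For the uniqueness assertion I would observe that if $\mathfrak{u}, \mathfrak{w}$ are two fixed points in $\mathfrak{B}_\eta$, the uniform upper bound $\norm{\mathfrak{u}(z)}, \norm{\mathfrak{w}(z)} \leq 1/\eta$ together with the uniform lower bound $\im \mathfrak{u}(z), \im \mathfrak{w}(z) \geq \eta^3/(2+\norm{S})^2$ makes $\sup_{x, z} D(\mathfrak{u}_x(z), \mathfrak{w}_x(z))$ finite, so \eqref{contraction property} forces this supremum to equal zero, hence $\mathfrak{u} = \mathfrak{w}$. Existence follows by iterating $\Phi$ from any starting point $\mathfrak{u}_0 \in \mathfrak{B}_\eta$: the contraction in $D$ together with $|\mathfrak{u}_n - \mathfrak{u}_m|^2 = D(\mathfrak{u}_n, \mathfrak{u}_m)\, \im \mathfrak{u}_n\, \im \mathfrak{u}_m$ and the uniform upper bound on $\im \mathfrak{u}_n$ makes the iterates Cauchy in the sup-norm; since $\mathfrak{B}_\eta$ is closed in that topology, the limit lies in $\mathfrak{B}_\eta$ and solves \eqref{QVE as fixed point equation}. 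The main obstacle is spotting the three-factor splitting in the Cauchy--Schwarz step; a naive estimate loses the affine structure of the map and yields no strict contraction, whereas this particular factorization is exactly what converts the gap $\im z \geq \eta$ into the explicit constant $(1+\eta^2/\norm{S})^{-2}$.
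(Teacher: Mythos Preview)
Your proof is correct and follows essentially the same strategy as the paper: reduce $D(\Phi(\mathfrak{u})_x,\Phi(\mathfrak{w})_x)$ to $D(\alpha,\beta)$ via the M\"obius isometry $\zeta\mapsto -1/\zeta$, then exploit the gap $\im z\ge\eta$ together with an averaging (``convexity'') property of $D$ under the positive operator $S$. The only difference is presentational: the paper packages the two key steps as abstract properties of $D$ (an exact contraction identity under imaginary shifts, and the bound $D(\phi(u),\phi(w))\le\sup_yD(u_y,w_y)$ for positive linear functionals $\phi$, quoted from \cite{KLW2}), whereas you carry them out directly via the explicit Cauchy--Schwarz factorisation $s_{xy}|\mathfrak{u}_y-\mathfrak{w}_y|=[s_{xy}\im\mathfrak{u}_y]^{1/2}[s_{xy}\im\mathfrak{w}_y\,D(\mathfrak{u}_y,\mathfrak{w}_y)]^{1/2}$. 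Your version is thus a self-contained unpacking of the paper's Lemma~\ref{lmm:Properties of hyperbolic metric}(ii)--(iii); neither approach offers a real advantage over the other.
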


We postpone the proof of \eqref{contraction property}, and with it the proof of Lemma \ref{lmm:Contraction property}, until after the end of the proof of Proposition \ref{prp:Existence and uniqueness}. The contraction property \eqref{contraction property} shows that for any initial value $\mathfrak{u}^{(0)} \in \mathfrak{B}_{\msp{-1}\eta}$, the sequence of iterates $\mathfrak{u}^{(k)}:= \Phi^k(\mathfrak{u}^{(0)})$ is a Cauchy-sequence in $\mathfrak{B}_{\msp{-1}\eta}$. Therefore, $\mathfrak{u}^{(k)}$ converges to the unique fixed point of $\Phi$ and thus to the restriction $m |_{\mathfrak{B}_{\eta}}$ of the unique solution $m$ to the QVE.

If we start the iteration  $\Phi^k(\mathfrak{u}^{(0)})$ with a choice of $\mathfrak{u}^{(0)}$ that is continuous in $z$ and holomorphic in the interior of $\Cp_\eta$ (e.g. $\mathfrak{u}^{(0)}_{\2x}(z):= \ii$), then every iterate has this property. Since the space of such functions is a closed subspace of  $\mathfrak{B}_{\msp{-1}\eta}$, the limit $m |_{\mathfrak{B}_{\eta}}$ is holomorphic in the interior of $\Cp_\eta$.  Since $\eta$ was arbitrary, we conclude that the unique solution $m(z)$ of the QVE is a holomorphic function of the spectral parameter $z \in \Cp$ on the entire complex upper half plane. 

Now we show the representation \eqref{m as stieltjes transform} for $m(z)$. 
We use that a holomorphic function on the complex upper half plane $ \phi:~\Cp~\to~\Cp$ is a Stieltjes transform of a probability measure on the real line if and only if $\abs{\ii\eta  \phi(\ii  \eta)+1} \to 0$ as $ \eta \to \infty$ (cf. Theorem~3.5 in \cite{Garnett-BA2007}, for example).
In order to see that
\begin{equation}
\label{condition for S-transform representation}
\lim_{\eta \to \infty}\sup_{x\in \xSpace}\,\absb{\ii \eta \1m_x(\ii\eta)+1} \,=\,0\,, 
\end{equation}
we write the QVE in the quadratic form
\[
z\2 m(z)+1 \,=\, -a \2m(z)-m(z)\1Sm(z)
\,.
\]
From this we obtain
\[
\abs{z\2m(z)+1} \,\leq\,\norm{a}\norm{m(z)}+\norm{S} \norm{m(z)}^2
\,.
\]
The right hand side is bounded by using \eqref{trivial upper bound for Phi} with $\mathfrak{u}:=m$:
\begin{equation}
\label{m trivial bound}
\abs{m(z)} \,\leq \, \frac{1}{\im z}\,, \qquad 
 z \in \Cp\,.
\end{equation}
Choosing $ z = \ii\eta $, we get $ \abs{\ii  \eta \1m(\ii\eta)+1} \leq \norm{a} \eta^{-1}+ \norm{S} \eta^{-2} $, and hence \eqref{condition for S-transform representation} holds true. 
This completes the proof of the Stieltjes transform representation \eqref{m as stieltjes transform}.

To finish the proof of Proposition \ref{prp:Existence and uniqueness} we show that the support of the $x$-th component $v_x$ of the generating measure lies for all $x$ in the common compact interval $[-\kappa,\kappa]$ with $\kappa$ defined in \eqref{definition of kappa}. In fact, from the Stieltjes transform representation \eqref{m as stieltjes transform} it suffices to show that $\im m_x(\tau +\ii \eta)$ converges to zero locally uniformly for $\abs{\tau}> \kappa$ as $\eta \downarrow 0$. 

From the QVE we read off that for any $z \in \Cp$ with $\abs{ z}>\kappa$ the following implication holds:
\[
\text{If } \; \norm{m(z)}\,<\, \frac{\abs{z}-\norm{a}}{2\1\norm{S}}\,,\; \text{ then }\;\norm{m(z)} \,<\, \frac{1}{\abs{z}-\norm{a}-\norm{S}\norm{m(z)}}\,\leq\, \frac{2}{\abs{z}-\norm{a}}\,.
\]
In particular, we see that there is a gap in the values that $\norm{m}$ can take,
\[
\norm{m(z)}\not \in \sB{\frac{2}{\abs{z}-\norm{a}},\frac{\abs{z}-\norm{a}}{2\1\norm{S}}}\,,\qquad \abs{z}>\kappa\,.
\]
Since $z \mapsto \norm{m(z)}$ is a continuous function and by \eqref{m trivial bound} the value of $ \norm{m(z)}$ lies below this gap for large values of $\im z$, we conclude that 
\begin{equation}
\label{m bound for large z}
\norm{m(z)}\,\leq\, \frac{2}{\abs{z}-\norm{a}}\,,\qquad \abs{z}>\kappa\,.
\end{equation}

Now we consider the imaginary part of the QVE,
\[
\frac{\im m(z)}{\,\abs{m(z)}^2\!}\,=\, -\im \frac{1}{m(z)}\,=\, \im z + S \im m(z)\,.
\]
We take the norm on both sides of this equation and use the bound \eqref{m bound for large z} to see that
\[
\norm{\im m(z)}\,\leq\, 4\, \frac{\im z + \norm{S}\norm{\im m(z)}}{(\abs{z}-\norm{a})^2}\,,\qquad \abs{z}>\kappa\,.
\]
By the definition \eqref{definition of kappa} of $\kappa$ the coefficient in front of $\norm{\im m}$ on the right hand side is smaller than $1$. Thus we end up with an upper bound on the imaginary part of the solution,
\[
\norm{\im m(z)}\,\leq\, \frac{4\2\im z}{(\abs{z}-\norm{a})^2-4\norm{S}}\,,\qquad \abs{z}>\kappa\,.
\]
In particular, this bound shows that 
\[
\lim_{\eta \downarrow 0} \sup_{\abs{\tau} \1\geq\1 \kappa+\eps}\! \norm{\1\im m(\tau +\ii \eta)}\,= \, 0\,,
\]
for any $\eps>0$. This finishes the proof of Proposition \ref{prp:Existence and uniqueness}.
\end{proof}

\begin{proof}[Proof of Lemma \ref{lmm:Contraction property}]
We show that, more generally than \eqref{contraction property}, for all functions $\mathfrak{u}, \mathfrak{w}: \Cp \to \Bounded(\xSpace, \Cp)$ and all $z \in \Cp$ we have
\begin{equation}
\label{contraction property at z}
D(\Phi( \mathfrak{u})_x(z),\Phi( \mathfrak{w})_x(z))
\,\leq\, 
\bigg(1+\frac{(\im z)^2}{\norm{S}}\bigg)^{-2}\msp{-5} \sup_{y \1\in\1 \xSpace}D(\mathfrak{u}_{\1y}(z),\mathfrak{w}_{y}(z))\,.
\end{equation}
To see this we need the following properties of the function $D$.  

\begin{lemma}[Properties of hyperbolic metric]
\label{lmm:Properties of hyperbolic metric}
The following three properties hold for $D$:
\begin{enumerate}
\item \label{cProp1}
Isometries: If $ \psi : \Cp \to \Cp $, is a M\"obius transform, of the form 
\[
\psi(\zeta) \,=\,\frac{\alpha \1 \zeta+\beta}{\gamma \1 \zeta+\delta}
\,,\qquad
\left(
\begin{array}{cc}
\alpha & \beta 
\\ 
\gamma &\delta
\end{array}
\right)
\in \mathrm{SL}_2(\R)\,, 
\]
then 
\[
D\big(\psi(\zeta),\psi(\omega)\big) \,=\, D(\zeta,\omega) 
\,.
\]
\item \label{cProp2}
Contraction: If $ \zeta $, $\omega \in \Cp $ are shifted in the positive imaginary direction by $ \eta> 0$ then  
\begin{equation*}
D(\1\ii \eta +\zeta, \ii \eta + \omega)
\,=\,
\Big(1+\frac{\eta}{\im\,\zeta}\Big)^{-1}
\Big(1+\frac{\eta}{\im\,\omega}\Big)^{-1}D(\zeta,\omega)
\,.
\end{equation*}
\item \label{cProp3} 
Convexity: 
Let $\phi\neq 0$ be a non-negative bounded linear functional on $\Bounded(\xSpace,\C)$, i.e., $\phi(w)\geq 0$ for all $w \geq 0$. Then
\[
D(\phi(w),\phi(u))\,\leq\, \sup_{x \in \xSpace} \, D(w_x,u_x)\,,
\]
for all $w,u \in \Bounded(\xSpace,\Cp)$ with $\inf_{x \in \xSpace}\im w_x>0$ and $\inf_{x \in \xSpace}\im u_x>0$. 
\end{enumerate}
\end{lemma}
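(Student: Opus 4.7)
My plan is to establish the three properties one at a time, with (1) and (2) following from direct algebraic computation and (3) requiring a Cauchy-Schwarz argument applied to a suitably normalized version of $\phi$.

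For (1), I would begin by computing $\psi(\zeta)-\psi(\omega)$ for a M\"obius transformation with real coefficients satisfying $\alpha\delta-\beta\gamma=1$. A direct expansion yields
$\psi(\zeta)-\psi(\omega)=\frac{\zeta-\omega}{(\gamma\zeta+\delta)(\gamma\omega+\delta)}$,
and the standard formula for the imaginary part gives $\im\psi(\zeta)=\im\zeta/|\gamma\zeta+\delta|^{2}$. Substituting these into the definition \eqref{definition of metric D} of $D$, the factors $|\gamma\zeta+\delta|^{2}$ and $|\gamma\omega+\delta|^{2}$ appear once in the numerator and once in the denominator and cancel exactly, leaving $D(\psi(\zeta),\psi(\omega))=D(\zeta,\omega)$. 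Part (2) is even more elementary: shifting $\zeta,\omega$ by $\ii\eta$ leaves $|\zeta-\omega|^2$ unchanged while turning $\im\zeta$, $\im\omega$ into $\im\zeta+\eta$, $\im\omega+\eta$; factoring out $\im\zeta\cdot\im\omega$ immediately produces the claimed contraction factors $(1+\eta/\im\zeta)^{-1}(1+\eta/\im\omega)^{-1}$.

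The key step is (3). The natural starting move is to exploit the invariance of $D$ under positive rescaling, $D(c\zeta,c\omega)=D(\zeta,\omega)$ for $c>0$, which allows me to normalize so that $\phi(\mathbbm{1})=1$. Then I treat $\phi$ as an abstract positive functional of unit norm; concretely, one may think of it as integration against a probability measure, but no representation is needed. Setting $R^{2}:=\sup_{x}D(w_{x},u_{x})$, the pointwise bound $|w_{x}-u_{x}|\le R\sqrt{\im w_{x}\,\im u_{x}}$ gives, by positivity of $\phi$,
\[
|\phi(w)-\phi(u)|
\,\le\,\phi(|w-u|)
\,\le\,R\,\phi\pb{\sqrt{\im w\,\im u}}
\,\le\, R\sqrt{\phi(\im w)\,\phi(\im u)}\,,
\]
where the last inequality is the Cauchy-Schwarz inequality for positive linear functionals. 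Since $\phi$ is positive (hence real on real-valued inputs), it commutes with the imaginary-part operation, so $\phi(\im w)=\im\phi(w)$ and similarly for $u$. Squaring and dividing by $\im\phi(w)\,\im\phi(u)$ yields $D(\phi(w),\phi(u))\le R^{2}$, as required.

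The main obstacle is producing the right inequality in (3): the Cauchy-Schwarz step has to be applied once and only once, and it has to land on $\sqrt{\im w\,\im u}$ rather than, for instance, $|w-u|$, so that the resulting right-hand side factorizes as $\phi(\im w)\phi(\im u)$ rather than the mixed quantity $\phi(\im w\,\im u)$, which would be larger by the wrong direction of Cauchy-Schwarz. Once the normalization $\phi(\mathbbm{1})=1$ is in place, this choice is essentially forced; without the normalization the same Cauchy-Schwarz produces a factor of $\phi(\mathbbm{1})^{2}$ that can be absorbed by the scale invariance of $D$. The verification that $\phi$ commutes with imaginary parts uses only that a non-negative linear functional sends real inputs to real numbers, which is immediate from the $\mathbb{C}$-linear extension of its restriction to real-valued bounded functions.
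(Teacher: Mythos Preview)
Your argument is correct. The paper itself does not give a detailed proof: it observes that (1) follows from the relation $D=2(\cosh d_{\Cp}-1)$ together with the classical fact that $\mathrm{SL}_2(\R)$-M\"obius maps are isometries of $d_{\Cp}$, that (2) is immediate from the definition \eqref{definition of metric D}, and for (3) it refers to \cite{KLW2}. Your direct algebraic verifications of (1) and (2) and your Cauchy--Schwarz argument for (3) are the standard route and are what those references amount to.

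One small remark on the commentary at the end: the normalization $\phi(\mathbbm{1})=1$ is in fact unnecessary. The Cauchy--Schwarz inequality for a positive linear functional, $\phi(\sqrt{fg})^{2}\le\phi(f)\,\phi(g)$ for nonnegative $f,g$, holds without any factor of $\phi(\mathbbm{1})$, so your closing sentence about absorbing a $\phi(\mathbbm{1})^{2}$ is slightly off, though harmless for the proof itself.
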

The properties (\ref{cProp1}) and (\ref{cProp2}) are  clear from the connection \eqref{D and hyperbolic metric} of $D$ to the hyperbolic metric and the definition of $D$ in \eqref{definition of metric D}, respectively. For a short proof of the property (\ref{cProp3}) in the setup where $\xSpace$ is finite, we refer to Lemma 5 in  \cite{KLW2}. Since that argument can easily be adapted to the case of general $\xSpace$, we will omit the proof.

In case $ S_x \neq 0 $ we use Lemma \ref{lmm:Properties of hyperbolic metric} and the definition of $\Phi$ in \eqref{definition of Phi} to estimate
\begin{equation*}
\begin{split}
D(\Phi( \mathfrak{u})_x(z),\Phi( \mathfrak{w})_x(z))
\,&=\, D\bigl(\2z + \avg{S_x\2\mathfrak{u}(z)}\2,\,z + \avg{S_x\2\mathfrak{w}(z)} \bigr)
\\
\,&\leq\, 
(1+(\im z)^2/\norm{S})^{-2}\, D\bigl(\1\re z +  \avg{\1S_x\2\mathfrak{u}(z)},\,\re z+ \avg{\1S_x\2\mathfrak{w}(z)}\bigr)
\\
\,&=\, 
(1+(\im z)^2/\norm{S})^{-2}\,D\bigl( \avg{\1S_x\2\mathfrak{u}(z)} , \avg{\1S_x\2\mathfrak{w}(z)}\bigr)
\\
\,&\leq\, 
(1+(\im z)^2/\norm{S})^{-2}\,\sup_{y \in \xSpace} \,D(\mathfrak{u}_{\1y}(z),\mathfrak{w}_{y}(z))
\,.
\end{split}
\end{equation*}
For the equalities we applied property (\ref{cProp1}). In the first inequality property (\ref{cProp2}) and in the second inequality property (\ref{cProp3}) was used.
On the other hand, if $ S_x = 0 $ then the claim is trivial. 

This finishes the proof of Lemma \ref{lmm:Contraction property}. The existence and uniqueness for the fixed point problem \eqref{QVE as fixed point equation} follows as explained after the statement of the lemma. 
\end{proof}

\section{Auxiliary results}
\begin{proof}[Proof of Lemma \ref{lmm:Spectral gap for operators with positive kernel}] In case $\inf_{x,y \in \xSpace}t_{x y}=0$, there is nothing to show. Thus, we assume $\eps:=\inf_{x,y \in \xSpace}t_{x y}>0$. 
Without loss of generality we may assume that $ \norm{T}_2 = 1 $ and that $ h $ is the unique eigenfunction satisfying $\norm{h}_2=1 $ and $ h \ge 0 $.
First we note that
\[
h \,=\, Th \,\geq\, \eps \int  h_x \2\xMeasure(\dd x) \,>\,0
\,.
\]
Since the kernel of $T$ is real, we work on the space of real valued functions in $L^{2}(\xSpace) $. 
Evaluating the quadratic form of $1\pm T$ at some $ u $ that is orthogonal to $h$ yields
\begin{equation*}
\begin{split}
\avgb{u,(1\pm\1T)u}\;&=\;\frac{1}{2}\iint  t_{x y}\,
\biggl(
u_x\,\sqrt{\frac{h_y}{h_x}}\,\pm\, u_y\,\sqrt{\frac{h_x}{h_y}}
\;\biggr)^{\!2}\xMeasure(\dd y)\,\xMeasure(\dd x)
\\
&\geq\; 
\frac{\eps}{2\1\norm{h}^2\!} 
\iint
h_x\, h_y \biggl(u_x^2\; \frac{h_y}{h_x}\,+\,u_y^2\;\frac{h_x}{h_y}\,\pm\, 2 \,u_x\, u_y\biggr)
\,\xMeasure(\dd y)\,\xMeasure(\dd x)
\\
&=\;
\frac{\eps}{\norm{h}^2\!} \int u_x^2\, \xMeasure(\dd x)
\;,
\end{split}
\end{equation*}
where in the inequality we used $ t_{x y}\geq \eps\geq \eps\2h_x h_y/\norm{h}^2$ for almost all $x,y \in \xSpace$. Now we read off that
\begin{equation*}
-\,\Bigl(1-\frac{\eps}{\norm{h}^2\!}\,\Bigr)\norm{u}_2^2\,\leq\, \scalar{\1u}{T u\1}\,\leq\, \Bigl(\11-\frac{\eps}{\norm{h}^2\!}\,\Bigr)\,\norm{u}_2^2
\,.
\end{equation*}
This shows the gap in the spectrum of the operator $T$.
\end{proof}

\begin{proof}[Proof of Lemma \ref{lmm:Norm of B^-1-type operators on L2}]
Proving the claim \eqref{bound on U-T inverse} is equivalent to proving that
\begin{equation}
\label{Norm of B^-1-type operators on L2 - with w}
\norm{(U-T)w}_2
\,\ge\, 
c\,\alpha\2\mathrm{Gap}(T)\norm{w}_2 
\,,\qquad \alpha\2:=\2\abs{\11 - \norm{T}_2\scalar{\1h}{ Uh\1}}\,,
\end{equation}
for all $w \in L^2(\xSpace)$ and for some numerical constant $c>0$.  
To this end, let us fix $w$ with $\norm{w}_2=1$. We decompose $w$ according to the spectral projections of $T$,
\begin{equation}
\label{decomposition of w}
w \,=\, \scalar{h}{w}\2 h + Pw\,, 
\end{equation}
where $P$ is the projection onto the orthogonal complement of $t$. During this proof we will omit the lower index $2$ of all norms, since every calculation is in $L^{2}(\xSpace) $.
We will show the claim in three separate regimes: 
\begin{itemize}
\item[(i)] 
$ 16\2\norm{Pw}^2 \2\geq \2 \alpha$,
\item[(ii)] 
$ 16\2\norm{Pw}^2 \2< \2  \alpha$ 
and $\alpha\2\geq\2 \norm{PUh}^2$,
\item[(iii)] 
$ 16\2\norm{Pw}^2 \2< \2 \alpha$ 
and $ \alpha\2<\2 \norm{PUh}^2$.
\end{itemize}

In the regime (i) the triangle inequality yields
\[
\norm{(U-T)w}\,\geq\, \norm{w}- \norm{Tw} \,=\, 1- \big(\abs{\scalar{h}{w}}^2\2\norm{T}^2 + \norm{TPw}^2 \big)^{1/2}.
\]
We use the simple inequality, $1-\sqrt{1-\tau\2}\geq \tau/2$, valid for every $\tau  \in [0,1]$, and find
\begin{equation}
\begin{split}
2\2\norm{(U-T)w}\,
&\ge\, 
1 - \abs{\scalar{h}{w}}^2\1\norm{T}^2 - \norm{TP\1w}^2
\\
&\geq\, 
1 \,- \abs{\scalar{h}{w}}^2\1\norm{T}^2 \,-\, (\1\norm{T}-\mathrm{Gap}(T))^2\norm{Pw}^2
\\
&=\, \,1\,- \norm{T}^2 \,+\, (2\1 \norm{T}  - \mathrm{Gap}(T)\1)\2\mathrm{Gap}(T)\2\norm{Pw}^2
\,.
\end{split}
\end{equation}
The definition of the first regime implies the desired bound \eqref{Norm of B^-1-type operators on L2 - with w}.

In the regime (ii) we project the left hand side of \eqref{Norm of B^-1-type operators on L2 - with w} onto the $h$-direction,
\begin{equation}
\label{regime ii step 1}
\norm{(U-T)w}\,=\, \norm{(1-U^*T)w}\,\geq\, |\scalar{h}{(1-U^*T)w}|
\,.
\end{equation} 
Using the decomposition \eqref{decomposition of w} of $w$ and the orthogonality of $h$ and $Pw$, we estimate further:
\begin{equation}
\label{regime ii step 2}
\begin{split}
|\scalar{h}{(1-U^*T)w}|\,&\geq\,
\abs{\scalar{h}{w}}\abs{1-\norm{T}\scalar{h}{U^*t}} - \abs{\scalar{h}{U^*TPw}}
\\
\,&\geq\,\abs{\scalar{h}{w}} \alpha-\norm{PUh}\norm{Pw}
\,.
\end{split}
\end{equation}
Since $\alpha\leq 2$ and by the definition of the regime (ii) we have $\abs{\scalar{h}{w}}^2=1-\norm{Pw}^2 \geq 1-\alpha/16\geq 7/8$ and $\norm{PUh}\norm{Pw}\leq  \alpha/4$. Thus, we can combine \eqref{regime ii step 1} and \eqref{regime ii step 2} to 
\[
\norm{(U-T)w}\,\geq\, \alpha/2
\,.
\]

Finally, we treat the regime (iii). Here, we project the left hand side of \eqref{Norm of B^-1-type operators on L2 - with w} onto the orthogonal complement of $h$ and get
\begin{equation}
\label{regime iii step 1}
\norm{(U-T)w}
\,\ge\, 
\norm{P(U-T)w}
\,\geq\, 
\abs{\scalar{h}{w}}\norm{PUh} -\norm{P(U-T)Pw}
\,,
\end{equation}
where we inserted the decomposition \eqref{decomposition of w} again. 
In this regime we still have $\abs{\scalar{h}{w}}^2\geq 7/8$, and we continue with
\begin{equation}
\label{regime iii step 2}
\abs{\scalar{h}{w}}\norm{PUh} -\norm{P(U-T)Pw}\,\geq\, \frac{3}{4}\norm{PUh}- 2\norm{Pw}\,\geq\, \frac{\alpha^{1/2}\!}{2}.
\end{equation}
In the last inequality we used the definition of the regime (iii). 
Combining \eqref{regime iii step 1} with \eqref{regime iii step 2} yields
\[
\norm{(U-T)w}\,\geq\, \alpha/4
 \,,
\]
after using $ \norm{h}=1$ in \eqref{Norm of B^-1-type operators on L2 - with w} to estimate $ \alpha \leq 2 $. 
\end{proof}

%\section{Bibliography}
%\bibliography{BIB-for-RMandQVE}

\begin{thebibliography}{10}

\bibitem{AC-yau-students}
Adlam, B.; Che, Z. {Spectral Statistics of Sparse Random Graphs with a General
  Degree Distribution}. \emph{arXiv:1509.03368} .

\bibitem{AM}
Adler, M.; van Moerbeke, P. {PDEs for the Gaussian ensemble with external
  source and the Pearcey distribution}. \emph{Comm. Pure Appl. Math.}
  \textbf{60} (2007), no.~9, 1261--1292.

\bibitem{AEK3}
Ajanki, O.; Erd{\H o}s, L.; Kr\"uger, T. {Local spectral statistics of Gaussian matrices with correlated entries}. \emph{J. Stat. Phys.} \textbf{163} (2016), no.~2, 280-302 


\bibitem{AEK1}
Ajanki, O.; Erd{\H o}s, L.; Kr\"uger, T. {Quadratic vector equations on the
  complex upper half plane}. \emph{arXiv:1506.05095}.

\bibitem{AEK2}
Ajanki, O.; Erd{\H o}s, L.; Kr\"uger, T. {Universality for general Wigner-type
  matrices}. \emph{arXiv:1506.05098}.

\bibitem{IntroductionRM}
Anderson, G.; Guionnet, A.; Zeitouni, O. \emph{An Introduction to Random
  Matrices}, \emph{Cambridge Studies in Advanced Mathematics}, vol. 118,
  Cambridge University Press, 2010.

\bibitem{AZdep}
Anderson, G.; Zeitouni, O. {A Law of Large Numbers for Finite-Range Dependent
  Random Matrices}. \emph{Comm. Pure Appl. Math.} \textbf{61} (2008), no.~8,
  1118--1154.

\bibitem{AZind}
Anderson, G.~W.; Zeitouni, O. {A CLT for a band matrix model}. \emph{Probab.
  Theory Related Fields} \textbf{134} (2005), no.~2, 283--338.

\bibitem{BMP2013}
Banna, M.; Merlev\`ede, F.; Peligrad, M. {On the limiting spectral distribution
  for a large class of random matrices with correlated entries}. \emph{Stoch.
  Proc. Appl.} \textbf{125} (2015), no.~7, 2700--2726.

\bibitem{Ber}
Berezin, F. {Some remarks on Wigner distribution}. \emph{Theoret. Math. Phys.}
  \textbf{3} (1973), no.~17, 1163--1175.

\bibitem{BK}
Bleher, P.~M.; Kuijlaars, A. B.~J. {Large $n$ Limit of Gaussian Random Matrices
  with External Source, Part III: Double Scaling Limit}. \emph{Comm. Math.
  Phys.} \textbf{270} (2007), 481--517.

\bibitem{BolleNeri}
Boll{\'e}, D.; Metz, F.~L.; Neri, I. {On the spectra of large sparse graphs
  with cycles}. in \emph{Spectral analysis, differential equations and
  mathematical physics: a festschrift in honor of Fritz Gesztesy's 60th
  birthday}, pp. 35--58, Amer. Math. Soc., Providence, RI, 2013.

\bibitem{BH}
Br\'ezin, E.; Hikami, S. {Universal singularity at the closure of a gap in a
  random matrix theory}. \emph{Phys. Rev. E} \textbf{57} (1998), no.~4,
  4140--4149.

\bibitem{BurdaVARMA}
Burda, Z.; Jarosz, A.; Nowak, M.~A.; Snarska, M. {A random matrix approach to
  VARMA processes}. \emph{New Journal of Physics} \textbf{12} (2010), no.~7,
  075\,036.

\bibitem{Chakrabarty2014}
Chakrabarty, A.; Hazra, R.~S.; Sarkar, D. {From random matrices to long range
  dependence}. \emph{arXiv:1401.0780} .

\bibitem{DKMcL}
Deift, P.; Kriecherbauer, T.; McLaughlin, K. T.-R. {New Results on the
  Equilibrium Measure for Logarithmic Potentials in the Presence of an External
  Field}. \emph{J. Approx. Theory} \textbf{95} (1998), no.~3, 388--475.

\bibitem{EarleHamilton70}
Earle, C.~J.; Hamilton, R.~S. {A Fixed Point Theorem for Holomorphic Mappings}.
  \emph{Proc. Sympos. Pure Math.} \textbf{XVI} (1970), 61--65.

\bibitem{EYBull}
Erd{\H{o}}s, L.; Yau, H.-T. {Universality of local Spectral statistics of
  random matrices}. \emph{Bull. Amer. Math. Soc} \textbf{49} (2012), 377--414.

\bibitem{EYY}
Erd{\H o}s, L.; Yau, H.-T.; Yin, J. {Bulk universality for generalized Wigner
  matrices}. \emph{Probab. Theory Related Fields} \textbf{154} (2011), no. 1-2,
  341--407.

\bibitem{ErgunModGraph}
Erg{\"u}n, G.; K{\"u}hn, R. {Spectra of modular random graphs}. \emph{J. Phys.
  A} \textbf{42} (2009), no.~39, 395\,001--395\,014.

\bibitem{FHS2006}
Froese, R.; Hasler, D.; Spitzer, W. {Absolutely Continuous Spectrum for the
  Anderson Model on a Tree: A Geometric Proof of Klein's Theorem}. \emph{Comm.
  Math. Phys.} \textbf{269} (2007), no.~1, 239--257.

\bibitem{Garnett-BA2007}
Garnett, J. \emph{{Bounded Analytic Functions}}, \emph{Grad. Texts in Math.},
  vol. 236, Springer, New York, 2007.

\bibitem{Girko-book}
Girko, V.~L. \emph{{Theory of stochastic canonical equations. Vol. I}},
  \emph{Mathematics and its Applications}, vol. 535, Kluwer Academic
  Publishers, Dordrecht, 2001.

\bibitem{Guionnet-GaussBand}
Guionnet, A. {Large deviations upper bounds and central limit theorems for
  non-commutative functionals of Gaussian large random matrices}. \emph{Annales
  de l'IHP Probabilit{\'e}s et statistiques} \textbf{38} (2002), 341--384.

\bibitem{HachemCovSurvey2015}
Hachem, W.; Hardy, A.; Najim, J. {A survey on the eigenvalues local behaviour
  of large complex correlated Wishart matrices}. \emph{ESAIM: Proceedings and
  Surveys} \textbf{50} (2015), 150--174.

\bibitem{Helton2007-OSE}
Helton, J.~W.; Far, R.~R.; Speicher, R. {Operator-valued Semicircular Elements:
  Solving A Quadratic Matrix Equation with Positivity Constraints}. \emph{Int.
  Math. Res. Notices} \textbf{2007} (2007).

\bibitem{KLW2}
Keller, M.; Lenz, D.; Warzel, S. {On the spectral theory of trees with finite
  cone type}. \emph{Israel J. Math.} \textbf{194} (2013), no.~1, 107--135.

\bibitem{KhorunzhyPastur94}
Khorunzhy, A.~M.; Pastur, L.~A. {On the eigenvalue distribution of the deformed
  Wigner ensemble of random matrices}. in \emph{Spectral operator theory and
  related topics}, pp. 97--127, Adv. Soviet Math., 19, Amer. Math. Soc.,
  Providence, RI, 1994.

\bibitem{KuhnModGraph}
K{\"u}hn, R.; van Mourik, J. {Spectra of modular and small-world matrices}.
  \emph{J. Phys. A} \textbf{44} (2011), no.~16, 165\,205--165\,218.

\bibitem{KMcL}
Kuijlaars, A. B.~J.; McLaughlin, K. T.-R. {Generic behavior of the density of
  states in random matrix theory and equilibrium problems in the presence of
  real analytic external fields}. \emph{Comm. Pure Appl. Math.} \textbf{53}
  (2000), no.~6, 736--785.

\bibitem{Menon-IEEEplots}
Menon, R.; Gerstoft, P.; Hodgkiss, W.~S. {Asymptotic Eigenvalue Density of
  Noise Covariance Matrices}. \emph{IEEE Trans. Signal Process.} \textbf{60}
  (2012), no.~7, 3415--3424.

\bibitem{Mu}
Muskhelishvili, N.~I. \emph{{Singular Integral Equations: Boundary Problems of
  Function Theory and Their Application to Mathematical Physics}}, Courier
  Dover Publications, 2008.

\bibitem{PasturDefWig}
Pastur, L. {On the Spectrum of Random Matrices}. \emph{Theor. Math. Phys.}
  \textbf{10} (1972), no.~1, 67--74.

\bibitem{PasturSpecProb}
Pastur, L.~A. {Spectral and Probabilistic Aspects of Matrix Models}. in
  \emph{Algebraic and geometric methods in mathematical physics}, pp. 207--242,
  Math. Phys. Stud., 19, Kluwer Acad. Publ., Dordrecht, 1996.

\bibitem{PasturShcerbinaAMSbook}
Pastur, L.~A.; Shcherbina, M. \emph{{Eigenvalue Distribution of Large Random
  Matrices}}, \emph{Mathematical Surveys and Monographs}, vol. 171, Amer. Math.
  Soc., 2011.

\bibitem{RaoPolyDOS}
Rao, N.~R.; Edelman, A. {The Polynomial Method for Random Matrices}.
  \emph{Found. Comput, Math.} \textbf{8} (2007), no.~6, 649--702.

\bibitem{ShlyakhtenkoGBM}
Shlyakhtenko, D. {Random Gaussian band matrices and freeness with
  amalgamation}. \emph{Int. Math. Res. Notices}  (1996), no.~20, 1013--1015.

\bibitem{ChoiSiverstein1995}
Silverstein, J.~W.; Choi, S.~I. {Analysis of the limiting spectral distribution
  of large dimensional random matrices}. \emph{J. Multivar. Anal.} \textbf{54}
  (1995), 295--309.

\bibitem{TW}
Tracy, C.~A.; Widom, H. {The Pearcey Process}. \emph{Comm. Math. Phys.}
  \textbf{263} (2006), 381--400.

\bibitem{WegnerNorb}
Wegner, F.~J. {Disordered system with $ n $ orbitals per site: $ n =\infty $
  limit}. \emph{Phys. Rev. B} \textbf{19} (1979).


\end{thebibliography}
%\bibliographystyle{habbrv}

\end{document}